\newcommand{\scr}[1]{{\scriptstyle{#1}}}
\newcommand{\rscr}[2]{{\raisebox{#2pt}{$\scriptstyle{#1}$}}}
\renewcommand{\mytitle}{Full Asymptotics and Laurent Series of Layer Potentials for Laplace's Equation on the Half-Space}
\newcommand{\myshorttitle}{Asymptotics and Laurent Series of Layer Potentials}
\renewcommand{\myauthor}{Karsten Fritzsch}
\renewcommand{\myuni}{Leibniz Universit\"at Hannover, Institut f\"ur Analysis}
\renewcommand{\myskip}{\medskip}
\newcommand{\textb}{\textit{b}}
\newcommand{\preb}[1]{\prescript{b}{}{#1}}
 \newcommand{\prebad}[2]{\preb{\hspace{-#2 pt}#1}}
\newcommand{\bdens}[1]{\tfrac{d#1}{#1}}
 \newcommand{\myfrac}[2]{{\raisebox{2pt}{\ensuremath{\scriptscriptstyle #1}}\hspace{-1.5pt}/\hspace{-1.5pt}\raisebox{-2pt}{\ensuremath{\scriptscriptstyle{#2}}}}}
\newcommand{\bfracdens}[2]{\tfrac{d\,\myfrac{#1}{#2}}{\myfrac{#1}{#2}}}
\newcommand{\bcup}{\mathbin{\overline{\cup}}}
 \DeclareMathOperator*{\Bcup}{\overline{\bigcup}}
\renewcommand{\Re}{\mathrm{Re}\,}
\newcommand{\Op}[1]{\mathrm{Op}\big(#1\big)}
\DeclareMathOperator{\codim}{codim}
\DeclareMathOperator{\nul}{null}
\DeclareMathOperator{\supp}{supp}
\newcommand{\calA}{\mathcal{A}}
\newcommand{\calE}{\mathcal{E}}
\newcommand{\calF}{\mathcal{F}}
\newcommand{\calG}{\mathcal{G}}
\newcommand{\calI}{\mathcal{I}}
\newcommand{\calK}{\mathcal{K}}
\newcommand{\calM}{\mathcal{M}}
\newcommand{\calN}{\mathcal{N}}
 \newcommand{\bcalN}{\preb{\mathcal{N}}}
\newcommand{\calP}{\mathcal{P}}
 \newcommand{\calPD}{{\calP_D}}
\newcommand{\calY}{\mathcal{Y}}
\newcommand{\olR}{\overline{\R^n}}
\newcommand{\bff}{\mathrm{bf}}
\newcommand{\df}{\mathrm{df}}
\newcommand{\bOmega}{\prescript{b}{}{\Omega}}
\newcommand{\bOmegah}{\prescript{b}{}{\Omega}^\frac{1}{2}}
\newcommand{\single}{\mathcal{S}\hspace{-2pt}\mathcal{L}}
\newcommand{\double}{\mathcal{D}\hspace{-1.9pt}\mathcal{L}}
\newcommand{\bsingle}{\prescript{b}{}{\hspace{-2pt}\single}}
\newcommand{\bdouble}{\prescript{b}{}{\hspace{-0.5pt}\double}}
\newcommand{\pr}{\mathrm{pr}}
\DeclareMathOperator{\dlim}{\partial-lim}
\begin{document}
\selectlanguage{british}
\pagestyle{plain}
\pagenumbering{alph}


\title{\mytitle}
\author{\myauthor\thanks{\myuni}
                 \thanks{partially funded by DFG grant 211072416 and EPSRC grant EP/K036696/1} \\
        \nolinkurl{k.fritzsch@math.uni-hannover.de}}
\date{\normalsize \today}
\maketitle
\thispagestyle{empty}


\begin{quote} \itshape We probe the application of the calculus of conormal distributions, in particular the Pull-Back and Push-Forward Theorems, to the method of layer potentials to solve the Dirichlet and Neumann problems on half-spaces. We obtain full asymptotic expansions for the solutions (provided these exist for the boundary data) and a new proof of the classical jump relations as well as Siegel and Talvila's growth estimates, using techniques that can be generalised to geometrically more complex settings. This is intended to be a first step to understanding the method of layer potentials in the setting of certain non-Lipschitz singularities and to applying a matching asymptotics ansatz to singular perturbations of related problems. \end{quote}


\pagestyle{main}

\section{Introduction}\label{sec:intro}
\setcounter{page}{1}\pagenumbering{arabic}

The method of layer potentials is a classical and well-studied approach to solving the Dirichlet and Neumann problems, in particular but not only for the Laplacian. If $Y \subset \R^n$ is a smooth and closed submanifold of dimension $n-1$ with connected interior domain $X$ (the bounded component of $\R^n \setminus Y$) and exterior unit normal field $\nu$, let $$\dlim u (y) = \lim\limits_{t \to 0^+} u(y-t\nu)$$ denote the pointwise limit to $y \in Y$ from within $X$, if it exists. Then, the (interior) Dirichlet and Neumann problems for $\Delta$,
 \begin{gather}
  \Delta u = 0 \quad\text{in $X$,} \qquad \phantom{\partial_\nu}\dlim u =f
   \label{intro:Dir} \\
  \Delta v = 0 \quad\text{in $X$,} \qquad\, \dlim \partial_\nu v =g
   \label{intro:Neu}
 \end{gather}
are reduced to solving certain Fredholm integral equations on $Y$. In detail, if $n \ge 3$ and if
 \[ P = \frac{|z-z^\p|^{2-n}}{(2-n)\vol\Sph^{n-1}} \]
denotes the fundamental solution for $\Delta$ on $\R^n$, then $\single = P \big|_{X \times Y}$ and $\double = \partial_\nu(z^\p) P \big|_{X \times Y}$ are the single and double layer potential kernels for $\Delta$ and $X$. The boundary integral equations then arise from the jump relations for the induced operators,
 \begin{align}
  \dlim \Op{\single}h &= \Op{\calN}h \quad, \label{intro:jump.1}\\
  \dlim \Op{\double}h &= \tfrac{1}{2}h + \Op{\calK}h \quad, \label{intro:jump.2}\\
  \dlim \partial_\nu \Op{\single}h &= -\tfrac{1}{2}h - \Op{\calK}^*h \quad, \label{intro:jump.3}
 \end{align}
where $\calN = P\big|_{Y^2} = \dlim \single$ and $\calK = \partial_\nu(z^\p)P\big|_{Y^2} = \dlim\double$ are the single and double boundary layer kernels. Good accounts of this can be found in \cite{Fol95,McL00,HW08}, for instance. Another important application of the method of layer potentials is the study of the Dirichlet-to-Neumann operator. If solution operators to \Cref{intro:Dir,intro:Neu} are constructed and their Fredholm and invertibility properties are established, then these can be used to construct the Dirichlet-to-Neumann operator and derive its properties, cf.\ for instance \cite[(7.33)]{MT99}.

In the past decades, this method has seen generalisations in various directions: Restricting the regularity of $Y$ or allowing $Y$ to be unbounded, to name but a few. The seminal works \cite{Cal77, CMM82} allowed to establish the classical method in the case of $Y$ being a Lipschitz hypersurface in a smooth and closed Riemannian manifold (see \cite{FJR78,Ver84} and \cite{MT99,MT00,MMT01} and the references given there), but this fails when $X$ has, e.g., cusps. Layer potentials for domains with corners have been studied in \cite{FJL77,TMV05} or \cite{ZM84,PS90,Els92,Rat92}, amongst others. If $Y$ is a coordinate plane (and hence $Y$ is unbounded and $X$ the half-space), the method has been extended in \cite{FS75,Arm76,Arm79,Gar81,Yos96}, for instance. In order to allow for polynomial growth of boundary data, the kernels have been modified by subtracting terms in an expansion in homogeneous harmonic functions, see \cite{Tal97} for a comprehensive account, yielding growth estimates for solutions (as in \cite{St96,Tal97,RY13,YR14,Qia15}) and existence and uniqueness for (inhomogeneous) Dirichlet and Neumann problems in weighted Sobolev spaces (cf. \cite{AN01,Amr02}).

The Dirichlet and Neumann problems and the method of layer potentials are also closely related to a specific transmission problem, the plasmonic eigenvalue problem, in which solutions are parameterised by reciprocals of eigenvalues of a combination of the exterior and interior Dirichlet-to-Neumann operator, cf. \cite{Gri14}. In order to understand the behaviour of solutions to the plasmonic eigenvalue problem in singular non-Lipschitz limits, one can use a matching asymptotics ansatz: A sequence of blow-ups (in the sense of \cite{Mel93}) is used to resolve the geometry to a manifold with corners and the existence of a solution with full asymptotic expansions at the boundary hypersurfaces is assumed. Matching up these expansions across transmission faces one obtains model problems which can then be studied separately, for instance by using the method of layer potentials.

\medskip This work is intended to be the first step to extend the method of layer potentials to singular domains that can be resolved in terms of manifolds with corners, including boundary data with polyhomogeneous behaviour near such singularities. (For instance, the cusp-like and non-Lipschitz singularity of the complement of two smooth and touching domains.) A function $\varphi : M \too \C$ on a manifold with boundary is called polyhomogeneous, if it is smooth in $\mathring{M}$ and near $\partial M$ admits an expansion
 \[ \varphi \sim \sum_{(\alpha,p) \in E} \varphi_{\alpha,p} \, \rho^\alpha \log^p \rho \]
in terms of a boundary defining function $\rho$, and where $E \subset \C \times \N$ is a suitable index set, see \Cref{APP.ssec:phgy} for more detail on this.  The natural framework for this is the calculus of conormal distributions of \cite{Mel92} and singular pseudodifferential calculi adapted to the specific geometry, for instance the $b$-calculus of \cite{Mel93}. Of particular importance are the Pull-Back and Push-Forward Theorems, cf. \Cref{APP.ssec:maps.fibrations}, as these allow us to show that the operators under consideration preserve polyhomogeneity. This scheme has been started in \cite{Fri}, including the situation of two touching domains. In this first part at hand, we consider layer potentials on the radial compactification of a half-space and reformulate the mapping properties and the classical jump relations in terms of polyhomogeneous functions. We obtain a new proof of the jump relations that relies on the Push-Forward Theorem and hence carries over easily to different geometries. We also consider modified layer potentials, in this case enabling us to include boundary data with polynomial growth at the face at infinity.

\paragraph{Main Results and Outline} In the following we always let $n \ge 3$. Let $X = \overline{\R^n_+}$ be the radially compactified half-space, $Y = \overline{\R^{n-1}}$ be the finite boundary of $X$ and $Z$ be the boundary at infinity of $X$. Restricting the construction of the $b$-double space of $\overline{\R^n}$ to the subspace $X \times Y$, and further resolving the boundary diagonal $\{(x,y) \in X \times Y \,|\, x=y \}$, in \Cref{sec:spaces.kernels} we obtain a manifold with corners $\calP_D$ and a canonical diffeomorphism of one of its faces with the $b$-double space of $Y$, $\lf(Y) \cong Y^2_b$. This space is constructed so that, in \Cref{ssec:layer.potentials}, we can show:

\begin{thm}\label{thm:intro.phg} The \textit{(modified)} layer potential kernels for $\Delta$, $\single_k$ and $\double_k$ as defined in \cref{def:single,def:double,def:mod.single,def:mod.double}, lift to define polyhomogeneous $b$-densities on $\calP_D$. They are smooth up to the face $\lf(Y)$ with restrictions given by the \textit{(modified)} boundary layer kernels $\calN_k$ and $\calK_k = 0$.
\end{thm}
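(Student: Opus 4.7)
My plan is to analyse each lifted kernel face-by-face on $\calP_D$. Recall that $\calP_D$ arises from $X \times Y$ by first performing the $b$-blow-ups that produce the restriction of the $b$-double space of $\overline{\R^n}$ to $X \times Y$, and then blowing up the boundary diagonal. Away from the boundary faces and the lifted diagonal the kernels are visibly smooth, so polyhomogeneity only needs to be checked at each boundary hypersurface: the diagonal front face $\ffD$, the lateral face $\lf(Y)$ lying over $\partial(X \times Y)$ away from the diagonal, and the faces at infinity, in particular $\bff$.

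Near $\ffD$ I would introduce polar-type coordinates $(r,\omega)$ around the lifted boundary diagonal with $r = |z-z^\p|$ and $\omega = (z-z^\p)/r$, so that $r$ is a boundary defining function for $\ffD$ up to a smooth nonvanishing factor. The single layer kernel then becomes $r^{2-n}$ times a smooth function; combined with the lifted density, whose Jacobian supplies a factor of $r^{n-1}$, it defines a polyhomogeneous $b$-density with leading exponent $2-n$ at $\ffD$. For the double layer one computes $\double = c\,(z-z^\p)\cdot\nu\,|z-z^\p|^{-n}$, and the factor $(z-z^\p)\cdot\nu = r\,(\omega\cdot\nu)$ provides exactly the extra power of $r$ needed for $\double$ to extend smoothly across $\ffD$. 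Moreover, along $\ffD \cap \lf(Y)$ both endpoints are forced onto $Y$, so $\omega$ is tangential to $Y$ and $\omega\cdot\nu$ vanishes there, contributing to $\calK_k = 0$.

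The restriction to $\lf(Y)$, which by construction is diffeomorphic to $Y^2_b$, is obtained at points away from the diagonal by the pointwise limit $\dlim$; for $\single$ this yields $\calN = P|_{Y^2}$ and for $\single_k$ the corresponding modified boundary kernel $\calN_k$ directly from the definitions. For the double layer, the exterior normal $\nu$ is constant and perpendicular to the flat hyperplane $Y$, so $(z-z^\p)\cdot\nu \equiv 0$ on $Y \times Y$; this global identity forces $\calK = 0$ on all of $Y^2 \setminus \mathrm{diag}$ and, together with the vanishing on $\ffD \cap \lf(Y)$ established above, gives $\calK_k = 0$ uniformly along $\lf(Y)$. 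The same cancellation applies to the modification terms since these are themselves harmonic polynomials in $z^\p$ with the same normal-derivative vanishing on $Y$.

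The most delicate step is the behaviour at $\bff$. Here one uses the classical multipole expansion of $|z-z^\p|^{2-n}$ in homogeneous harmonic polynomials in $z^\p$, divided by increasing powers of $|z|$; after passing to projective coordinates adapted to the $b$-blow-up at infinity, this expansion translates into a polyhomogeneous expansion in a defining function of $\bff$. In the unmodified case the full expansion is already polyhomogeneous of the expected type, but the low-order terms generate contributions that obstruct the treatment of boundary data of polynomial growth; the modification defining $\single_k$ and $\double_k$ subtracts precisely the first $k$ terms of this expansion, shifting the index set at $\bff$ and yielding polyhomogeneity of the modified kernels with the claimed leading orders. The main obstacle will be controlling polyhomogeneity uniformly near the corner $\ffD \cap \bff$, where the short-distance asymptotics interacts with the behaviour at infinity, and tracking exactly how many modification terms are required to clear the index set at $\bff$ without disturbing the leading exponents at $\ffD$ and the vanishing at $\lf(Y)$.
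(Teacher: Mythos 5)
Your face-by-face plan is essentially the paper's own route (\Cref{thm:kernels.phg} together with \Cref{APP.thm:x.phg,APP.thm:abs.phg,APP.thm:mod.phg,APP.thm:dens.phg}): factor the kernels into $x$, $|z-z'|^{\lambda}$ and the lifted density, check each factor in adapted coordinates, and read off the restriction at $\lf(Y)$, with $\calK_k=0$ coming from the explicit factor $x$. Two details in your diagonal discussion are only slips: with the density factor $r^{n-1}$ the $b$-density $\bsingle$ has leading exponent $2-n+(n-1)=1$ at $\df$, not $2-n$, and it is the $b$-densities (not the functions) that extend across $\df$; neither affects the claim at $\lf(Y)$.

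The genuine gap is your treatment of the modified kernels at infinity. The terms subtracted in \cref{def:mod.single,def:mod.double} are $\tfrac{|z|^m}{|z'|^{m+n-2}}C^{\frac{n-2}{2}}_m(\Theta)$ (resp.\ $x\,\tfrac{|z|^m}{|z'|^{m+n}}C^{\frac{n}{2}}_m(\Theta)$): homogeneous harmonic polynomials in $z$ divided by increasing powers of $|z'|$, i.e.\ the leading terms of the expansion of the kernel as $|z'|\to\infty$, and the multipole series \cref{eq:multipole} converges only where $|z|<|z'|$ — that is, in a neighbourhood of $\rf$, which is the only face with such a neighbourhood. The expansion you invoke (harmonic polynomials in $z'$ over powers of $|z|$) is the other regime, $|z'|<|z|$; it diverges exactly where the modification matters, and $\single_k$, $\double_k$ do not subtract its first $k$ terms. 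Correspondingly, the modification shifts the index sets at $\rf$ (improved to $k-1$ resp.\ $k+1$ for the $b$-densities) and at $\lf(Z)$ (worsened to $2-n-k$ resp.\ $1-n-k$), while the index at $\bff$ is unchanged, cf.\ \cref{eq:kernels.phg} — so "shifting the index set at $\bff$" is not what happens, and as written this step of your argument would fail. The repair is two-fold, as in the paper: for the bare polyhomogeneity asserted here, note that each modification term (a cut-off times powers of $x$, $|z|$, $|z'|$ and a polynomial in $\Theta$) is itself polyhomogeneous on $\calPD$ by \Cref{APP.thm:mod.phg}, so the modified kernels are polyhomogeneous as finite differences; then, near $\rf$ only, use the convergent tails \cref{eq:mod.at.infty.1,eq:mod.at.infty.2} to see the improved decay in $z'$, which is the actual purpose of the modification and is what the mapping properties later require.
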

Here, the subscript $k$, where $k \in \N$ (and $0 \in \N$), indicates a modified variant of the respective kernel for which the $k$ leading order terms at the boundary at infinity, $Z$, have been removed. Considering these is necessary in order to accommodate the possible growth of data at infinity (in $Y$): If $k > \pm 1 - \Re E$, choosing $+1$ and $-1$ for the single and double layer kernel, the integrability condition in the Push-Forward Theorem is exactly the usual integrability condition for the modified layer potentials.

To prove \Cref{thm:intro.phg}, we rely on local coordinate expressions for the fundamental solution of $\Delta$. Using the Pull-Back and Push-Forward Theorems, we see that the layer kernels define operators between spaces of polyhomogeneous functions and we use a simple formula for push-forwards, as in \cite{GG01,gohar}, to improve the index sets and obtain the expected mapping properties. This also allows us to construct examples showing that solutions with and without logarithmic terms at infinity may arise.

\begin{thm}\label{thm:intro.mapping} Let $E$ be an index set, $f \in \calA^E(Y)$ and $k_\pm = k_\pm(E) = \min\{\, l \in \N\,|\, l > \pm 1 - \Re E\,\}$. Then, $\Op{\single_{k_+}}f$ and $\Op{\double_{k_-}}f$ exist and are polyhomogeneous on $X$ with index families given in \cref{eq:potentials.phg}. The layer potentials of $f$ have Laurent expansions at the boundary at infinity $Z$ if and only if conditions \cref{eq:log.condition} respectively \cref{eq:log.condition.mod} hold.
\end{thm}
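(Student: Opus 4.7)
The plan is to realise each (modified) layer potential as a push-forward of the product of the lifted kernel with a pulled-back copy of $f$, and then invoke the Pull-Back and Push-Forward Theorems of \Cref{APP.ssec:maps.fibrations}. By \Cref{thm:intro.phg} the kernels $\single_{k_+}$ and $\double_{k_-}$ lift to polyhomogeneous $b$-densities on $\calP_D$; together with the two natural projections $\pi_L \colon \calP_D \to X$ and $\pi_R \colon \calP_D \to Y$ this places us squarely in the setting of the calculus of conormal distributions. To pull back $f$, I would first identify the boundary hypersurfaces of $\calP_D$ that map under $\pi_R$ into the boundary at infinity of $Y$ and check that $\pi_R$ is a $b$-fibration on the relevant components; the Pull-Back Theorem then produces a polyhomogeneous $\pi_R^* f$ with index set $E$ at each such preimage face and $\{0\}$ elsewhere. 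Multiplying by the lifted kernel yields a polyhomogeneous $b$-density on $\calP_D$ whose index family is obtained by summing contributions face by face.

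Next comes the integrability condition for the push-forward along $\pi_L$. At the blown-up boundary diagonal and at $\lf(Y)$ the smoothness statement of \Cref{thm:intro.phg} settles this immediately; the only delicate case is the face lying over infinity in $Y$, where the unmodified kernels have leading order $\pm 1$. Subtracting $k_\pm$ homogeneous terms as in \cref{def:mod.single,def:mod.double} shifts the leading order of the full integrand above $-1$ precisely when $k_\pm = \min\{\,l \in \N : l > \pm 1 - \Re E\,\}$, which is exactly the integrability condition for $b$-densities. The Push-Forward Theorem then yields polyhomogeneity of $\Op{\single_{k_+}}f$ and $\Op{\double_{k_-}}f$ on $X$ with an extended-union index family at each boundary face of $X$; combining this with the specific restriction data of \Cref{thm:intro.phg} (in particular the smoothness of both kernels up to $\lf(Y)$ and the vanishing $\calK_{k_-}=0$) simplifies the naive index family to \cref{eq:potentials.phg}.

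For the Laurent assertion at $Z$ I would appeal to the explicit push-forward formula of \cite{GG01,gohar}: the extended union of two index sets introduces a $\log$-factor precisely when they share elements of the same real part, and the coefficient of each such logarithmic term is an explicit fibre integral of the leading asymptotic coefficients on the contributing faces. Working these coefficients out at $Z$ and identifying them with the quantities in \cref{eq:log.condition,eq:log.condition.mod} shows that each respective condition is equivalent to the vanishing of all logarithmic terms in the expansion of the corresponding layer potential. The technical heart of the argument, and its main obstacle, is exactly this bookkeeping step: the Push-Forward Theorem yields a formally correct but unnecessarily large index family, so extracting the sharp Laurent condition requires carefully tracking cancellations between the $E$-contribution lifted from $Y$ via $\pi_R$ and the integer contribution coming from the smooth boundary-diagonal face, then recognising the residual coefficients as the prescribed integral conditions on $f$.
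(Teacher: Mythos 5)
Your overall route is the paper's: lift the kernels to $\calPD$ (via \Cref{thm:kernels.phg}), pull $f$ back along $\pi_r$, multiply, and push forward along $\pi_l$, with the integrability condition living only on $\rf$ (the sole face in the null set of $e_{\pi_l}$), and with the Laurent question at $Z$ settled by the explicit log-coefficient formula of \cite{GG01,gohar}. Your sketch of that last step is in the right spirit; the paper implements it in \Cref{thm:logs.condition,thm:logs.condition.mod} by using the second multipole expansion, in powers of $|z'|/|z|$, which converges near $\lf(Z)\cap\bff$, so that the diagonal corner coefficients become exactly the Gegenbauer integrals in \cref{eq:log.condition,eq:log.condition.mod}. (Minor quibbles: the integrability threshold for $b$-densities in \Cref{APP.thm:pft} is positivity of the real part, not ``$>-1$'', though your resulting condition $k_\pm>\pm1-\Re E$ is the correct one; and there is no integrability condition at $\df$ or $\lf(Y)$ at all, since those faces are not in the null set.)

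The genuine gap is at the face $Y$. The Push-Forward Theorem gives at $Y$ the extended union of the contributions from $\lf(Y)$ and $\df$, namely $0\bcup 1$, which contains $(j,1)$ for all $j\ge 1$ and therefore allows $x^{j}\log x$ terms; \cref{eq:potentials.phg} asserts the index set $0$, i.e.\ smoothness up to $Y$, and this is part of the statement you must prove. You attribute the collapse of $0\bcup 1$ to $0$ to ``the smoothness of both kernels up to $\lf(Y)$ and the vanishing $\calK_{k_-}=0$,'' but those facts were already consumed in producing the index sets $0$ resp.\ $1$ at $\lf(Y)$ and $\df$ and do nothing to eliminate the logarithms that the extended union permits at the corner $\lf(Y)\cap\df$. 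What is needed is precisely the coefficient formula you deploy only at $Z$: by \Cref{thm:logs.exa} the log coefficients are (minus) the diagonal corner coefficients $u_{jj}$, and the paper shows these vanish — for the unmodified kernels by the parity argument of \Cref{thm:log.lemma}, i.e.\ oddness of the integrand under $(x_1,x_2,\theta)\mapsto(-x_1,-x_2,-\theta)$ as carried out in \Cref{thm:smooth.x}, and for the finitely many modification terms by a direct check that no diagonal elements occur near $\lf(Y)\cap\df\cap\bff$ in \Cref{thm:smooth.x.mod}. Without supplying this argument your proposal only yields polyhomogeneity with index set $0\bcup 1$ at $Y$, which is strictly weaker than \cref{eq:potentials.phg} (and, downstream, than the statement that the boundary limits are attained smoothly). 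You should also note that the ``if and only if'' Laurent criterion is formulated for integer index sets $E=l$, which is the setting in which the diagonal-coefficient bookkeeping of \Cref{thm:logs.exa} applies.
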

In \Cref{ssec:bdy.kernel}, we show that the (modified) boundary layer kernel $\calN_k$ defines an element of the full $b$-calculus and then go on to use the formula from \cite{gohar} again to show the following formulation of the classical jump relations, cf. \Cref{thm:formulae}:

\begin{thm}\label{thm:intro.jump} With respect to a suitable trivialisation of a neighbourhood of $Y \subset X$, there are functions $R^\pm_k \in \dot{C}^0\big([0,1);\calA(Y)\big)$ so that
 \begin{align*}
   \Op{\single_k}f(\chi,y) &= \Op{\calN_k}f(y) + R^+_k(\chi,y) \;, \\
   \Op{\double_k}f(\chi,y) &= \phantom{-} \tfrac{1}{2}f(y) + R_k^-(\chi,y) \;, \\
   \partial_{\nu}\big(\Op{\single_k}f\big)(\chi,y) &= -\tfrac{1}{2}f(y) - R_k^-(\chi,y) \;.
 \end{align*}
\end{thm}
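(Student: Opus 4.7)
The approach is to realise each operator as a push-forward of a polyhomogeneous kernel density on $\calPD$ under the lift of the projection $X \times Y \to X$, and then read off the boundary value at $Y \subset X$ via the fibrewise restriction version of the Push-Forward Theorem from \cite{gohar}. Writing $\Op{\single_k}f = (\pi_X)_*(\single_k \cdot \pi_Y^* f)$ and similarly for $\Op{\double_k}f$ and $\partial_\nu\Op{\single_k}f$, the preimage in $\calPD$ of the face $Y \subset X$ is the union of $\lf(Y) \cong Y^2_b$ with the front face of the boundary-diagonal blow-up, and the push-forward formula decomposes the boundary value at $\chi = 0$ into contributions from these two faces.

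For the single layer, \Cref{thm:intro.phg} gives that $\single_k$ is smooth up to $\lf(Y)$ with restriction $\calN_k$. Under the identification $\lf(Y) \cong Y^2_b$, the fibre integration along the left projection is precisely the operator $\Op{\calN_k}$, so the $\lf(Y)$-contribution yields $\Op{\calN_k}f(y)$. The front-face contribution carries an extra vanishing factor in $\chi$ (that face lies over the boundary diagonal of $Y$, and the blow-down contributes a power of $\chi$), so it is absorbed into $R^+_k$. Polyhomogeneity of the difference $\single_k - \calN_k$ on $\calPD$ (the latter pulled back along the blow-down from $\lf(Y)$) and a further application of the Push-Forward Theorem give the claimed $\dot{C}^0\big([0,1);\calA(Y)\big)$-regularity of $R^+_k$.

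For the double layer, the restriction of $\double_k$ to $\lf(Y)$ is the zero kernel $\calK_k = 0$, so the $\lf(Y)$-contribution vanishes and the boundary value at $\chi = 0$ is produced entirely by the front face. In the blown-up polar coordinates around the boundary diagonal, the leading behaviour of $\double_k$ on the interior of the front face is the Poisson-type density $(\mathrm{vol}\,S^{n-1})^{-1}\, \chi\, (\chi^2 + |y - y'|^2)^{-n/2}\, dy'$, and the fibre integral reduces to the standard computation
\[
\frac{1}{\mathrm{vol}\,S^{n-1}} \int_{\R^{n-1}} \frac{dz}{(1 + |z|^2)^{n/2}} = \tfrac{1}{2}.
\]
Since $\pi_Y^* f$ restricts to the constant $f(y)$ along this fibre, the front-face contribution is exactly $\tfrac{1}{2}f(y)$, and $R^-_k$ arises as the push-forward of the difference between $\double_k$ and its leading front-face model, which vanishes at the relevant face and hence lies in $\dot{C}^0\big([0,1);\calA(Y)\big)$. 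The third identity is then an immediate corollary of the second via the symmetry $\nabla_x P = -\nabla_y P$, which together with the compatibility of the modifications implies that the kernel of $\partial_\nu\Op{\single_k}$ coincides with $-\double_k$ on $\calPD$; the boundary value is therefore $-\tfrac{1}{2}f(y) - R^-_k(\chi, y)$.

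The main obstacle is the explicit analysis at the front face for the double layer: one must verify that, after the $k$-th order modifications at the face at infinity $Z$ are subtracted, the restriction of $\double_k$ to the interior of the front face still has the universal Poisson-kernel form above, so that the front-face integral is independent of $k$ and equal to $\tfrac{1}{2}$. The remaining steps — the index-set bookkeeping, the compatibility of the modifications with the symmetry $\nabla_x P = -\nabla_y P$, and the $\dot{C}^0$-regularity of $R^\pm_k$ — reduce to routine applications of the Push-Forward Theorem to polyhomogeneous remainder kernels that vanish at the appropriate face of $\calPD$.
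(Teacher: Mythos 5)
Your strategy coincides with the paper's (\Crefrange{thm:jump.lemma.1}{thm:jump.lemma.4} in \Cref{APP.sec:bdy}): decompose the limit at $Y$ of the push-forward along $\pi_l$ into the contributions of $\lf(Y)$ and $\df$, identify them via the explicit leading-coefficient formula of \Cref{thm:logs.exa}, obtain $\Op{\calN_k}f$ from $\lf(Y)$ for the single layer and $\tfrac12 f$ from the Poisson integral over $\df$ for the double layer, and get the third identity from kernel symmetry. However, two of your justifications do not hold as stated. For the single layer you claim the $\df$-contribution vanishes because ``the blow-down contributes a power of $\chi$''; the blow-down supplies no such universal factor, and if it did, the same argument would annihilate the $\df$-contribution of the double layer, i.e.\ precisely the $\tfrac12 f(y)$ you compute next. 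The actual reason is kernel-specific and is the content of \Cref{thm:kernels.phg}: the $b$-density $\bsingle_k$ has index set $1$ at $\df$ (the order $2-n$ singularity against the density factor, which vanishes to order $n-1$ at $\df$, leaves one positive power of a defining function of $\df$), while $\bdouble_k$ has index set $0$ at $\df$ and $1$ at $\lf(Y)$. This dichotomy, not a property of $\beta_\calPD$, is what makes the single layer see only $\lf(Y)$ and the double layer see only $\df$; moreover the absence of a leading $\log\chi$ term, needed even for the limits to exist, comes from these index sets together with \Cref{thm:smooth.x}, not from the geometry alone.

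Second, the identity underlying your third line, that the kernel of $\partial_\nu\Op{\single_k}$ coincides with $-\double_k$ on $\calPD$, is false for $k \ge 1$: using $\partial_t C^\lambda_m(t) = 2\lambda C^{\lambda+1}_{m-1}(t)$ and the recurrence $m\,C^\lambda_m(t) = 2\lambda\big(t\,C^{\lambda+1}_{m-1}(t) - C^{\lambda+1}_{m-2}(t)\big)$ one finds $\partial_\nu\big(|z|^m C^{\frac{n-2}{2}}_m(\Theta)\big) = (n-2)\,x\,|z|^{m-2}C^{\frac{n}{2}}_{m-2}(\Theta)$, so differentiating the $k$ modification terms of $\single_k$ reproduces the double-layer modifications only up to order $k-2$; the two kernels differ by finitely many terms of the form $\psi(z^\p)\,x\,|z|^{m}|z^\p|^{-m-n}C^{\frac{n}{2}}_{m}(\Theta)$. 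The conclusion survives because each discrepancy term carries a factor of $x$ and, for $k = k_+(E)$, is integrable against $f$, hence contributes only to a remainder vanishing at $\chi=0$ (so the third line holds after possibly adjusting $R^-_k$ by such a term); this is exactly the weaker statement the paper proves in \Cref{thm:jump.lemma.4}, namely that the normal derivatives of the modification terms vanish at $x=0$, without asserting equality of kernels. As written, your step asserts a false identity and omits the convergence check that makes the repair legitimate.
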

(Here, $\dot{C}^0\big([0,1);\calA(Y)\big)$ denotes the space of continuous functions $[0,1) \too \calA(Y)$ that vanish at $\{0\}$.) Since solutions to \Cref{intro:Dir,intro:Neu} are given by layer potentials up to certain harmonic polynomials and because harmonic polynomials are clearly polyhomogeneous, we immediately obtain polyhomogeneity and hence full asymptotic expansions of solutions to the Dirichlet and Neumann problems:

\begin{thm}\label{thm:intro.solns} If the boundary data $f$ respectively $g$ is polyhomogeneous, $f$, $g \in \calA^E(Y)$, then the polyhomogeneous Dirichlet and Neumann problems
 \begin{align}
  \Delta u &= 0 \quad\text{on $X$}\,, & \dlim u &= f \,,
   &u \in \calA^\calE(X) & \;\,\text{with $\Re\calE(Z) \ge \min\{\Re E,n-1\}$,}
   \label{intro.eq:dir.phg} \\
  \Delta v &= 0 \quad\text{on $X$}\,, & \dlim \partial_\nu v &= g \,,
   &v \in \calA^\calF(X) & \;\text{with $\Re\calF(Z) \ge \min\{\Re E - 1,n-2\}$,}
   \label{intro.eq:neu.phg}
 \end{align}
have solutions. Solutions are given by the \textit{(modified)} double respectively single layer potentials and are unique up to the addition of harmonic polynomials of degree up to $k_\pm(E)$ that in the Dirichlet case vanish at $Y$ and in the Neumann case have normal derivative vanishing at $Y$.
\end{thm}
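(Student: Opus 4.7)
The plan is to deduce existence directly from \Cref{thm:intro.mapping} and \Cref{thm:intro.jump}, and then to prove uniqueness by reflecting across $Y$ and applying a Liouville-type argument for polyhomogeneous harmonic functions on $\overline{\R^n}$.

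\emph{Existence.} For the Dirichlet problem with $f \in \calA^E(Y)$, set $u = 2\,\Op{\double_{k_-(E)}}f$. By \Cref{thm:intro.mapping}, $u$ is well-defined and polyhomogeneous on $X$ with the index family as in \cref{intro.eq:dir.phg}; harmonicity in $\mathring X$ follows by differentiating under the integral sign for $\chi > 0$, using that $\double_{k_-}$ is harmonic in its first argument away from the diagonal. The boundary condition is read off from \Cref{thm:intro.jump}: since $R_{k_-}^- \in \dot{C}^0\big([0,1);\calA(Y)\big)$ vanishes at $\chi = 0$, the jump formula gives $\dlim u = f$. The Neumann case is analogous, with $v = -2\,\Op{\single_{k_+(E)}}g$ and the third formula of \Cref{thm:intro.jump}.

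\emph{Uniqueness.} Let $w$ be the difference of two Dirichlet solutions (the Neumann case is parallel). Then $w \in \calA^\calE(X)$ is harmonic in $\mathring X$ with $\dlim w = 0$. Extend $w$ to $\widetilde w$ on $\overline{\R^n}$ by odd Schwarz reflection across $Y$ (even reflection in the Neumann case). Since reflection across $Y$ is an isometric involution extending smoothly to the radial compactification, $\widetilde w$ is polyhomogeneous on $\overline{\R^n}$ with the same index bound at $Z$ and harmonic on all of $\R^n$. Expanding $\widetilde w$ in spherical harmonics near the boundary at infinity, harmonicity forces each asymptotic term to be either a harmonic polynomial $r^\ell Y_\ell(\omega)$ or a solid spherical harmonic $r^{2-n-\ell} Y_\ell(\omega)$; regularity of $\widetilde w$ on all of $\R^n$ rules out the latter, while the index bound $\Re\calE(Z) \ge \min\{\Re E, n-1\}$ caps the polynomial degree at $k_-(E)$ (respectively $k_+(E)$ for Neumann, using the shifted bound $\min\{\Re E - 1, n-2\}$). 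Restricting back to $X$ and using $w|_Y = 0$ (respectively $\partial_\nu w|_Y = 0$) leaves exactly the harmonic polynomials described in the statement.

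\emph{Main obstacle.} Existence is essentially a direct read-off of the preceding theorems, so the substance of the argument is in uniqueness. The key technical step is checking that Schwarz reflection preserves polyhomogeneity on the radial compactification with the correct index set at $Z$; this should follow from naturality of the compactification under the isometric involution $z_n \mapsto -z_n$, but requires a brief verification at the corner $Z \cap \overline Y$. Translating the index bounds $\min\{\Re E, n-1\}$ and $\min\{\Re E - 1, n-2\}$ into the sharp degree bounds $k_\pm(E)$ is mostly bookkeeping, noting that the constants $n-1$ and $n-2$ correspond exactly to the orders at $Z$ of the leading terms of $\single$ and $\double$ that are removed by the modification.
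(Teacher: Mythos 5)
Your existence argument is correct and is essentially what the paper intends: $u = 2\,\Op{\double_{k_-(E)}}f$ solves the Dirichlet problem by \Cref{thm:intro.jump}, and $v = -2\,\Op{\single_{k_+(E)}}g$ solves the Neumann problem by the third jump formula; polyhomogeneity with the stated index bounds is exactly \cref{eq:potentials.phg}. The paper itself treats this as immediate from \Cref{thm:intro.mapping,thm:intro.jump} together with the classical theory (in particular \cite{Tal97}), rather than spelling it out.

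Your uniqueness argument, however, is a genuinely different and more self-contained route: the paper leans on classical existence--uniqueness results for the half-space with polynomially growing data (as in \cite{FS75,Gar81,Tal97}), whereas you reprove uniqueness via Schwarz reflection and a Liouville-type argument. That is a reasonable plan, and buys independence from the external references. But as written it has a gap that you flag yourself and do not close: you assert that $\widetilde w$ is \emph{polyhomogeneous on $\overline{\R^n}$} and then expand it in spherical harmonics at $\Sph_\infty$. This is not automatic. The odd (resp.\ even) reflection glues two copies of the manifold with corners $X$ along $Y$, producing a space with a crease along $\Gamma$ rather than the smooth manifold with boundary $\overline{\R^n}$; that the glued function is polyhomogeneous at $\Sph_\infty$ --- in particular jointly smooth in the equatorial variable near $\Gamma$ --- requires an argument, and the reflection being an isometric involution of $\overline{\R^n}$ is not by itself enough (it shows each half is polyhomogeneous, not that the two halves match to all orders across $\Gamma$). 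The spherical-harmonics step inherits the same issue, since it presupposes an expansion at $\Sph_\infty$ whose existence is precisely what is in question.

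The good news is that this step can be bypassed entirely, making your approach airtight with less machinery. From $w \in \calA^\calE(X)$ you already have the uniform bound $|w(z)| \le C\,(1+|z|)^{-\Re\calE(Z) + \eps}$ on all of $X$ for every $\eps > 0$ (polyhomogeneity gives this near the interiors of $Y$ and $Z$ and near the corner $\Gamma$ alike); the reflected $\widetilde w$ then satisfies the same polynomial growth bound on $\R^n$, is harmonic there by the Schwarz reflection principle, and is therefore a polynomial of degree at most $\lfloor -\Re\calE(Z)\rfloor$ by the standard Liouville theorem for entire harmonic functions of polynomial growth --- no expansion of $\widetilde w$ at $\Sph_\infty$ is needed. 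From there your bookkeeping translating $-\Re\calE(Z) \le -\min\{\Re E,\,n-1\}$ (resp.\ $-\min\{\Re E - 1,\,n-2\}$) into the degree bound $k_-(E)$ (resp.\ $k_+(E)$), and the observation that the boundary condition $w|_Y = 0$ (resp.\ $\partial_\nu w|_Y = 0$) restricts which such polynomials survive, goes through as you sketch. One small correction of notation: the involution fixes the hyperplane $Y = \{x = 0\}$, so it is $x \mapsto -x$ (the variable you call $z_n$ should be $x = z_1$), and the corner is $\Gamma = Y \cap Z$, not $Z \cap \overline{Y}$.
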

One consequence of using the calculus of conormal distributions is that from \Cref{thm:intro.jump} we obtain, without further work, the fact that the limits $\dlim u$ and $\dlim \partial_\nu v$ in \Cref{thm:intro.solns} are attained uniformly on compact subsets including all derivatives (of $u$ respectively $\partial_\nu v$) in directions tangent to $Y$.

\medskip As detailed above, in \Cref{sec:spaces.kernels} we construct the resolution and manifold with corners $\calPD$ and introduce kernels and $b$-densities on this space. In the main part, \Cref{sec:method}, we introduce the (modified) layer kernels, show that they define polyhomogeneous $b$-densities on $\calPD$, determine their mapping properties between spaces of polyhomogeneous functions and then give a new proof of the classical jump relations. This leads directly to \Cref{thm:intro.solns}. Most detailed calculations and proofs can be found in \Cref{APP.sec:calcs}, while \Cref{APP.sec:defs} contains background material on manifolds with corners, polyhomogeneity and the Pull-Back and Push-Forward Theorems.

\paragraph{Further Remarks} As mentioned before, this is but a first step and is meant to be a test case for the applicability of the calculus of conormal distributions to the method of layer potentials. In particular, it shows which challenges await: In the general case (when $Y = \partial X$ is not flat), the proof of the jump relations holds true (with minor modifications) but the boundary double layer potentials $\calK_k$ will not vanish but define nontrivial elements of the full $b$-calculus. Thus, in order to solve the Dirichlet and Neumann problems (while at the same time staying within the class of polyhomogeneous functions), we will need to extend results on Fredholm properties to the full $b$-calculus. The same is already true in the case of the half-space for the oblique derivative problem or the construction of the Dirichlet-to-Neumann operator as both of these involve the inversion of $\calN_k$.


\paragraph{Acknowledgements} The author would like to express his gratitude for the support and supervision he has enjoyed from his advisor Daniel Grieser. He would also like to thank Michael Singer, Chris Kottke and Richard B. Melrose for comments, suggestions and discussions.


\section{Spaces and Kernels}\label{sec:spaces.kernels}

Let $n \ge 3$ and $X=\overline{\R^n_+}$ denote the radial compactification of the half-space
 \begin{equation}\label{SK.eq:half.space}
  \R^n_+ = \myset{z = (x,y) \in \R \times \R^{n-1}}{x \ge 0} \;.
 \end{equation}
$X$ is a compact and smooth manifold with corners, having two boundary hypersurfaces: The boundary hypersurface at infinity, $Z$, which can be identified with a closed half-sphere of dimension $n-1$, and the boundary hypersurface at $x=0$, $Y$, which we will identify with $\overline{\R^{n-1}}$. They meet in a single corner, identifiable with $\Sph^{n-2}$ and denoted by $\Gamma$. We will make use of the following sets of adapted local coordinates:
 \begin{equation}\label{SK.eq:corner.coordinates}\begin{array}{cl}
  \big(x,y\big) &\text{ near the interior of $Y$,} \\
  \big(\rho,\vartheta\big) &\text{ near the interior of $Z$,} \\
  \big(xr,r,\omega\big) &\text{ near $\Gamma$.}
 \end{array}\end{equation}
Here $(\rho^{-1},\vartheta) \in \R_+ \times \Sph^{n-1}_+$ denote polar coordinates for $z$ and $(r^{-1},\omega) \in \R_+ \times \Sph^{n-2}$ polar coordinates for $y$. (Note that $\rho$ and $r$ denote the reciprocal modulus of $z$ respectively $y$.) The term \emph{adapted} refers to the fact that these sets contain boundary defining functions: $x$ and $\rho$ are boundary defining functions for $Y$ and $Z$, respectively (valid near the interior in each case), while $r$ and $xr$ are boundary defining functions for $Z$ and $Y$ near the corner $\Gamma$. $\vartheta$ and $\omega$ are coordinates in $Z$ and $\Gamma$, respectively.

When dealing with products of faces of $X$ (including $X$ itself), we will mark the coordinate in the second factor by a superscript prime as is common notation. And when writing index families for $X$, we will always refer to the ordering $\calM_1(X) = \big\{\, Y,\, Z\,\big\}$.

\subsection{The Poisson and Double Spaces}\label{ssec:spaces}

As the radial compactification $\overline{\R^n}$ has only one boundary component, the sphere at infinity $\Sph_\infty$, there is no ambiguity in defining the $b$-double space, it is given by blowing up the corner $\Sph_\infty \times \Sph_\infty$ in $\big(\overline{\R^n}\big)^2$:
 \begin{equation}
  \preb{\big(\overline{\R^n}\big)}^{\raisebox{2pt}{$\scriptstyle 2$}}
   = \Big[\big(\overline{\R^n}\big)^2;\big(\Sph_\infty\big)^2\Big] \xrightarrow{\quad\beta\quad} \overline{\R^n}^2 \;,
 \end{equation}
cf.~\Cref{APP.ssec:blow.ups}. Recall that this blow-up amounts to substituting the unit normal bundle to the corner $\Sph_\infty \times \Sph_\infty$ in $\big(\overline{\R^n}\big)^2$ for $\Sph_\infty \times \Sph_\infty$ itself and that $\beta$ is a diffeomorphism away from the corner. In this way, the newly created face (called the front face of the blow-up) encodes directions in which the corner can be approached from within.

The blow-down map $\beta$ has corresponding versions mapping into $X \times Y$ and $Y \times Y$. Let
 \begin{equation}\begin{array}{cccl}
  \calP \deq \big[X \times Y; Z \times \Gamma\big]
    &\xrightarrow{\quad\beta_\calP\quad} &X \times Y \;, \\
  Y^2_b = \big[Y^2 ; \Gamma^2 \big] &\xrightarrow{\quad\beta_Y\quad} &Y \times Y \;.
 \end{array}\end{equation}
$Y^2_b$ is the standard $b$-double space of $Y$. In fact, there are canonical isomorphisms $\calP \cong \beta^*(X \times Y)$ and $Y^2_b \cong \beta^*(Y^2)$, though we will not go into the details here. (Both $X \times Y$ and $Y \times Y$ are $b$-submanifolds of $\olR^2$ intersecting $\Sph_\infty^2$ freely, cf. \cite[V.8 ff.]{Mel96}.) Following the conventions in \cite{Loy98}, we denote the boundary hypersurfaces of $Y^2_b$ by
 \begin{equation}
  \lf_Y \deq \beta_Y^\ast\Big( \Gamma \times Y\Big) \;,\quad
  \rf_Y \deq \beta_Y^\ast\Big( Y \times \Gamma\Big) \;,\quad
  \bff_Y \deq \beta_Y^\ast\Big( \Gamma \times \Gamma\Big) \;.
 \end{equation}
(After blow-down, the left face $\lf_Y$ corresponds to the \qt{right factor,} $\beta_Y(\lf_Y) = \Gamma \times Y$ and vice versa for $\rf_Y$.) The four boundary hypersurfaces of $\calP$ will be denoted by
 \begin{equation}\begin{aligned}
  \rf_\calP &\deq \beta_\calP^\ast\Big(X \times \Gamma\Big) \;,
  &\lf_\calP(Y) &\deq \beta_\calP^\ast\Big(Y \times Y\Big) \;, \\
  \lf_\calP(Z) &\deq \beta_\calP^\ast\Big(Z \times Y\Big) \;,
  &\bff_\calP &\deq \beta_\calP^\ast \Big( Z \times \Gamma\Big) \;,
 \end{aligned}\end{equation}
cf.~\Cref{fig:poisson}. Similarly to the isomorphisms mentioned above, there is a canonic isomorphism $Y^2_b \cong \lf_\calP(Y)$. We will make use of the adapted coordinates in $\calPD$ derived from those in \Cref{SK.eq:corner.coordinates}, see the first paragraph of \Cref{APP.sec:calcs} for two examples.

\begin{figure}[ht]\centering
 \includegraphics[width=0.9\textwidth]{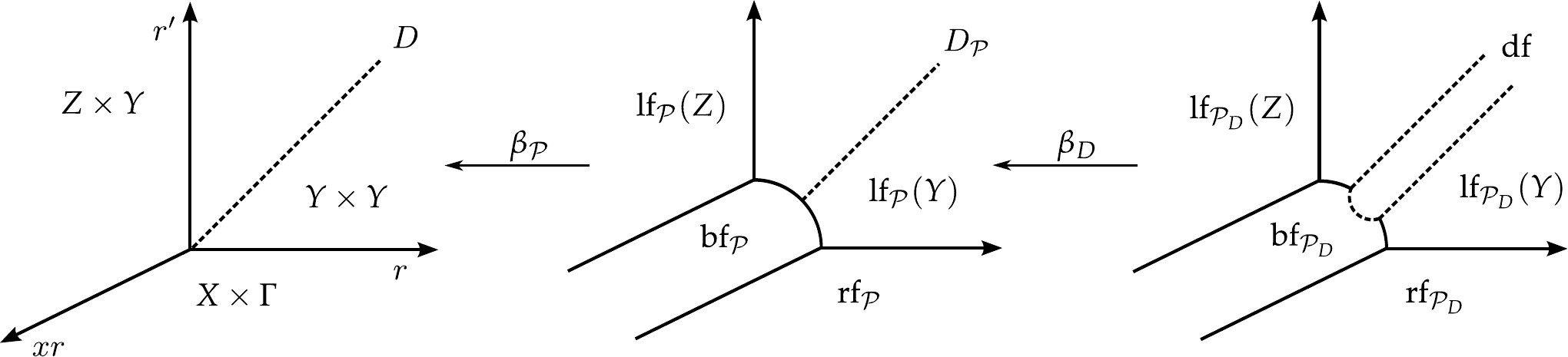}
 \caption[The Poisson space for the half-space]
         {The (resolved) Poisson space $\calP$ resp. $\calPD$ and its boundary hypersurfaces.}
 \label{fig:poisson}
\end{figure}

It will be useful to perform another blow-up in $\calP$, namely of the intersection of the lifted diagonal with $\lf_\calP(Y)$. The diagonal in $X \times Y$ is given by
 \begin{equation}\label{eq:diagonal}
  D \deq \myset{(z,z^\p) \in X \times Y}{z = z^\p}
 \end{equation}
and it lifts to a $p$-submanifold $D(\calP) \deq \beta_\calP^*(D) \subset \calP$, which can be readily derived from the fact that the diagonal in $\olR^2$ lifts to a $p$-submanifold of the respective $b$-double space. Then, its intersection with $\lf_\calP(Y)$ (another $p$-submanifold) is a $p$-submanifold as well and we define the \emph{resolved Poisson space} to be
 \begin{equation}
  \calPD \deq \Big[ \calP ; D(\calP) \cap \calY\Big] \xrightarrow{\quad \beta_\calPD \quad} \calP \;.
 \end{equation}
We let $\beta_{\calPD} \deq \beta_D \circ \beta_\calP$ and denote the lifts of \qt{old faces} as before, though we might equip them with a subscript $\calPD$ to emphasise the space. The newly created face, the front face of $\beta_D$, will be denoted by $\df$, as in \emph{diagonal face}.

When referring to index families for these spaces, we will always use the following ordering of boundary hypersurfaces:
 \begin{equation}\label{eq:bhs.orderings}\begin{aligned}
  \calM_1(Y^2_b)  &= \big\{\, \lf_Y,\, \rf_Y,\, \bff_Y \, \big\} \\
  \calM_1(\calP)  &= \big\{\, \lf_\calP(Y),\, \lf_\calP(Z),\, \rf_\calP,\, \bff_\calP \, \big\} \\
  \calM_1(\calPD) &= \big\{\, \lf_\calPD(Y),\, \lf_\calPD(Z),\, \rf_\calPD,\, \bff_\calPD,\, \df \, \big\}
 \end{aligned}\end{equation}

\subsection{Kernels and \textb-Densities}

In order to apply the Push-Forward Theorem, see \Cref{APP.ssec:maps.fibrations}, as well as related results on mapping properties between weighted Sobolev spaces and spaces of polyhomogeneous functions (as in \Cref{thm:layer.ops.phg,thm:bdy.mapping}), we will need to interpret distributional kernels as $b$-densities on $\calPD$. We do this by choosing two $b$-densities on $X$ respectively $Y$: On $X$, let $\mu = x^{-1} |dz|$, while on $Y$ we use $|dz^\p|$.  Then, $\mu \in \calA^{(0,1-n)}\big(X;\bOmega\big)$ and $|dz^\p| \in \calA^{1-n}\big(Y;\bOmega\big)$ are polyhomogeneous $b$-densities, as we show in \Cref{APP.thm:dens.phg}. Let $\pr_l$, $\pr_r$ denote the projections \emph{onto} the left respectively right factor of $X \times Y$ and define $\pi_\bullet \deq \pr_\bullet \circ \beta_{\calPD}$ to be the corresponding blow-down projections. Again, be aware that, e.g., $\pi_l$ projects \emph{off} the left faces in the sense that $\pi_l\big(\lf_\calPD(H)\big) = H$ for $H = Y$, $Z$. The blow-down projections are $b$-fibrations with exponent matrices of a simple form as we show in \Cref{APP.thm:b.exponents}. Now, given a distributional kernel $A$ on $X \times Y$, let
\begin{equation}\label{eq:kernel.density.ident}
  \prebad{A}{2}
   \deq \beta_\calPD^\ast\big(A\big) \, \pi_l^\ast\big(\mu) \, \pi_r^\ast\big(|dz^\p|\big)
 \end{equation}
denote the associated density. The pull-backs of these $b$-densities are well-defined, compare the remark at the end of \Cref{APP.ssec:blow.ups}. Classically, $A$ defines an operator $\Op{A}$ via
 \begin{equation}\label{eq:plain.op}
  \Op{A}f(z) \deq \int_Y A(z,z^\p)f(z^\p)\,|dz^\p| \;,
 \end{equation}
whenever the integral converges and this can be interpreted as a combination of pull-backs and push-forwards,
 \begin{equation}\label{eq:density.op}
  \Op{A}f(z)\mu_l = (\pi_l)_*\Big( \prebad{A}{2} \; \pi_r^*(f) \Big) \;.
 \end{equation}
As hinted at by the left hand sides of \cref{eq:plain.op} and \cref{eq:density.op}, we then identify functions with $b$-densities by multiplication by $\mu$ respectively $|dz^\p|$,
 \begin{equation}\label{eq:fct.dens.ident}
  \calA\big(X;\bOmega\big) \longleftrightarrow \mu \calA(X) \quad\text{ and }\quad
  \calA\big(Y;\bOmega\big) \longleftrightarrow |dz^\p| \calA(Y) \;.
 \end{equation}
Observe that these identifications change the index families, though: In detail, we identify
 \begin{equation}\label{eq:fct.dens.ident.detail.1}
  \calA^{(E,F)}(X) \ni u \longleftrightarrow u\mu \in \calA^{(E,F+1-n)}\big(X;\bOmega\big)
 \end{equation}
and
 \begin{equation}\label{eq:fct.dens.ident.detail.2}
  \calA^{E}(Y) \ni f \longleftrightarrow f|dz^\p| \in \calA^{E + 1 - n}\big(Y;\bOmega\big) \;.
 \end{equation}
Then, as a short hand we write
 \begin{equation}\label{eq:final.op.ident}
  \Op{A}f = (\pi_l)_*\Big(\prebad{A}{2}\, (\pi_r)^* f\Big) \;,
 \end{equation}
and usually understand the left-hand side as a function. Similarly, we will identify kernels on $Y \times Y$ (or rather $Y^2_b$) with $b$-densities by
 \begin{equation}\label{eq:kernel.density.ident.bdy}
  \preb{B} \deq \beta_Y^*(B) \, (\pi_l^\partial)^\ast(|dy|) \, (\pi_r^\partial)^\ast(|dy^\p|) \;,
 \end{equation}
where $\pi_\bullet^\partial \deq \pr_\bullet^\partial \circ \beta_Y$ and $\pr_\bullet^\partial : Y^2 \too Y$ are the projections \emph{onto} the left respectively right factor and we have used the coordinates $y$ and $y^\p$ to emphasise the fact that these are defined on $Y = \overline{\R^{n-1}}$.

\section{The Method of Layer Potentials}\label{sec:method}

The starting point for the method of layer potentials for the Laplacian on the half-space is its fundamental solution
 \begin{equation}\label{eq:fund.sol}
  P(z,z^\p) \deq \frac{|z-z^\p|^{2-n}}{(2-n)\vol \Sph^{n-1}} \;,
 \end{equation}
which is smooth for $z$, $z^\p \in \R^n$, $z \neq z^\p$ and has a conormal singularity at the diagonal. Its restriction to $\R^n_+ \times \R^{n-1}$ defines the \emph{Neumann kernel}
 \begin{equation}\label{def:single}
  \single \deq P\sub{\R^n_+ \times \R^{n-1}} = \frac{|z-z^\p|^{2-n}}{(2-n)\vol \Sph^{n-1}}
   = \frac{ (x^2 + |y-y^\p|^2)^{\tfrac{2-n}{2}}}{(2-n)\vol\Sph^{n-1}}
 \end{equation}
and the restriction of its normal derivative defines the \emph{Poisson kernel}
 \begin{equation}\label{def:double}
  \double \deq \big(\partial_\nu(z^\p) P\big)\sub{\R^n_+ \times \R^{n-1}}
   = -\frac{|z-z^\p|^{-n}}{\vol \Sph^{n-1}} \scal{\nu(z^\p)}{z-z^\p}
   = x \frac{ (x^2 + |y-y^\p|^2)^{-\tfrac{n}{2}}}{\vol\Sph^{n-1}} \;.
 \end{equation}
(Here and in the following, $\nu$ will denote the lift of the vector field $-\partial_x \in C^\infty\big(\R^n_+;T\R^n_+\big)$ to $X$ or one of the double spaces.) These kernels will also be called the \emph{single} and \emph{double layer kernels}. Given a function $f : Y \too \R$, by means of \cref{eq:plain.op}, the kernels define operators
 \begin{align}
  \Op{\single}f (z) &\deq \int_Y \single(z,z^\p) f(z^\p) \big|dz^\p\big|
   = (\pi_l)_*\Big(\bsingle\,(\pi_r)^*f\Big) (z) \;,
   \label{eq:plain.single.op} \\
  \Op{\double}f (z) &\deq \int_Y \double(z,z^\p) f(z^\p) \big|dz^\p\big|
   = (\pi_l)_*\Big(\bdouble\,(\pi_r)^*f\Big) (z) \;,
   \label{eq:plain.double.op}
 \end{align}
whenever the integrals converge. We identify \cref{eq:plain.single.op,eq:plain.double.op} with functions on $X$ via \cref{eq:fct.dens.ident.detail.1} and call the resulting operators \emph{single} and \emph{double layer operators}.

\myskip As mentioned earlier, \cite{FS75} and later \cite{Gar81} introduced modified layer kernels so as to be able to study their action on functions of polynomial growth. The following construction is given (and studied) in detail in \cite{Tal97} as well. Consider the multipole expansion
 \begin{equation}\label{eq:multipole}
  \big|z-z^\p\big|^{-s}
   = \sum_{m=0}^\infty \frac{|z|^m}{|z^\p|^{m+s}} C^\frac{s}{2}_m(\Theta) \;,
  \end{equation}
where $|z| < |z^\p| \neq 0$, $\Theta = \tfrac{<z,z^\p>}{|z||z^\p|}$ and $C^s_m$ denotes the \emph{ultraspherical} or \emph{Gegenbauer polynomial}, see \cite[pp.~80]{Sze39}, of orders $m$ and $s$. If $|z| < |z^\p|$, the right hand side of \cref{eq:multipole} converges uniformly. \cref{eq:multipole} allows us to expand the layer potential kernels in terms of homogeneous, harmonic polynomials and this is the basis for the extension of the method of layer potentials to functions of polynomial growth. In the following, for convenience we define $C^s_{-1} = 0$.

\begin{prop}\label{thm:hom.harm.decomp} Let $z$, $z^\p \in \R^n$, with $|z| < |z^\p|$ and $\Theta = \tfrac{<z,z^\p>}{|z||z^\p|}$.
\begin{enumerate}
 \item In the multipole expansion
        \[ |z-z^\p|^{2-n} = \sum_{m=0}^\infty \frac{|z|^m}{|z^\p|^{m+n-2}} C^{\frac{n-2}{2}}_m(\Theta) \;, \]
     each summand on the right hand side is homogeneous \textit{(in both $z$ and $z^\p$) and harmonic (in $z$)}.
 \item In the multipole expansion
        \begin{align*}
         |z-z^\p|^{-n}\scal{\nu(z^\p)}{z-z^\p} &= \sum_{m=0}^\infty \Scal{\nu(z^\p)}{C_{n,m}(z,z^\p)} \\
          &= \sum_{m=0}^\infty x \frac{|z|^{m-1}}{|z^\p|^{m+n-1}} C^{\frac{n}{2}}_{m-1} (\Theta) \;,
        \end{align*}
     where $C_{n,m}(z,z^\p) = z \tfrac{|z|^{m-1}}{|z^\p|^{m+n-1}} C^{\frac{n}{2}}_{m-1}(\Theta) - z^\p \tfrac{|z|^m}{|z^\p|^{m+n}}C^{\frac{n}{2}}_{m}(\Theta)$, each summand on the right hand sides is homogeneous \textit{(in both $z$ and $z^\p$) and harmonic (in $z$)}.
\end{enumerate}\end{prop}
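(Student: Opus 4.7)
The plan is to derive both multipole expansions as direct consequences of the generating function for Gegenbauer polynomials,
\[ (1 - 2t\Theta + t^2)^{-s} = \sum_{m=0}^\infty C^s_m(\Theta)\, t^m \quad\text{for } |t|<1, \]
applied to the factorisation $|z-z'|^{-2s} = |z'|^{-2s}\bigl(1 - 2(|z|/|z'|)\Theta + (|z|/|z'|)^2\bigr)^{-s}$ with $t=|z|/|z'|<1$. Choosing $s = (n-2)/2$ immediately gives the expansion in (i); choosing $s = n/2$ produces the auxiliary expansion $|z-z'|^{-n} = \sum_m \psi_m$ with $\psi_m = |z|^m |z'|^{-(m+n)} C^{n/2}_m(\Theta)$, which will drive (ii). Absolute, locally uniform convergence with all derivatives on $\{|z|<|z'|\}$ is inherited from the convergence of the generating series.

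For (i), each summand $\phi_m(z,z') = |z|^m |z'|^{-(m+n-2)} C^{(n-2)/2}_m(\Theta)$ is manifestly homogeneous of degree $m$ in $z$ and of degree $-(m+n-2)$ in $z'$, because $\Theta = \langle z, z'\rangle/(|z||z'|)$ is invariant under independent positive scalings. For harmonicity in $z$ I would argue by uniqueness of homogeneous components: $|z-z'|^{2-n}$ is harmonic in $z$ off the diagonal, so termwise differentiation gives $0 = \sum_m \Delta_z \phi_m$ with $\Delta_z \phi_m$ homogeneous of degree $m-2$ in $z$; because a convergent series of terms of pairwise distinct homogeneities in $z$ can vanish only termwise (extract the radial Fourier coefficient on spheres of fixed $|z|$), each $\phi_m$ is harmonic. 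Equivalently, one may recognise $\phi_m$ as a scalar multiple of the solid zonal harmonic of degree $m$ in $n$ dimensions with axis $z'$, whose harmonicity is classical.

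For (ii), I would multiply the $s=n/2$ identity by the vector $z-z'$ and regroup summands by degree of homogeneity in $z$. The $z$-part $z\sum_m \psi_m$, after the reindexing $m \mapsto m-1$ (using $C^{n/2}_{-1}=0$), has $m$th summand $z\,|z|^{m-1}|z'|^{-(m+n-1)} C^{n/2}_{m-1}(\Theta)$, homogeneous of degree $m$ in $z$; the $-z'$-part $-z'\sum_m \psi_m$ is already graded correctly. Adding the two produces the vector-valued identity $|z-z'|^{-n}(z-z') = \sum_m C_{n,m}(z,z')$. Pairing with the constant vector $\nu(z') = -e_1$ gives the first equality of (ii), and on $Y$ the $z'$-term collapses because $\langle \nu(z'), z'\rangle$ vanishes there, yielding the boundary formula. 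Homogeneity of each $C_{n,m}$ in both variables is immediate from the grouping.

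Harmonicity in $z$ of each $C_{n,m}$ is most cleanly obtained from the elementary identity $|z-z'|^{-n}(z-z') = -(n-2)^{-1}\nabla_z |z-z'|^{2-n}$: applying $\nabla_z$ termwise to the expansion of (i) produces a second expansion of the left-hand side with $m$th summand $-(n-2)^{-1}\nabla_z \phi_{m+1}$, homogeneous of degree $m$ in $z$ and componentwise harmonic as the gradient of a harmonic function. The uniqueness argument from (i) then forces $C_{n,m} = -(n-2)^{-1}\nabla_z \phi_{m+1}$, and harmonicity follows. The main obstacle throughout is really only the bookkeeping around the two independent index shifts (from the generating function and from the reindexing in (ii)) together with justifying the uniqueness of the homogeneous-in-$z$ decomposition on $\{|z|<|z'|\}$; no analytic input beyond convergence of the Gegenbauer series is needed.
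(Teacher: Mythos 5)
Your proposal is correct and takes essentially the same route as the paper: quote the uniformly convergent Gegenbauer/multipole expansion, read off homogeneity, and get harmonicity of each summand by applying $\Delta_z$ termwise and using that a convergent series of terms of pairwise distinct homogeneities in $z$ vanishes only termwise, with (ii) handled by regrouping according to homogeneity in $z$. Your only deviation is cosmetic: for harmonicity in (ii) you identify $C_{n,m}$ with $-(n-2)^{-1}\nabla_z\phi_{m+1}$ via the gradient identity rather than re-running the termwise-Laplacian argument on the reordered series, which is an equally valid shortcut.
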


\begin{proof} A simple direct proof for this is given in \cite[Lemma 3.2.1]{Tal97}, also compare the references given there (namely \cite{Arm79,Gar81,Yos96}). The idea is the following: Fix $z^\p \in \R^{n-1}$. Using the uniform convergence of the multipole expansion \cref{eq:multipole}, harmonicity of $|z-z^\p|^{2-n}$ and the fact that the Laplacian preserves homogeneity, we see that the summands in item \emph*{i)} are harmonic on $\{|z| < |z^\p|\} \subset \R^n$. As they are homogeneous in $z$, they are indeed harmonic on all of $\R^n$. For the terms in the expansion of $|z-z^\p|^{-n}\scal{\nu(z^\p)}{z-z^\p}$, the same can be achieved by reordering the terms with respect to their homogeneity in $z$ and noting that the left hand side is still harmonic.
\end{proof}

Now let $\psi : \R^{n-1} \too \R_+$ be a smooth cut-off function vanishing for $|z^\p| < 1$ and being identically $1$ for $|z^\p|>2$. Then, for $k>0$ we define
 \begin{align}
  \single_k(z,z^\p) &\deq \single(z,z^\p)
    - \tfrac{1}{(2-n)\vol\Sph^{n-1}}\psi(z^\p)
        \sum_{m=0}^{k-1} \frac{|z|^m}{|z^\p|^{m+n-2}} C^{\frac{n-2}{2}}_m(\Theta) \;,
  \label{def:mod.single} \\
  \double_k(z,z^\p) &\deq \double(z,z^\p)
    + \tfrac{1}{\vol \Sph^{n-1}}\psi(z^\p) \sum_{m=0}^{k}\scal{\nu(z^\p)}{C_{n,m}(z,z^\p)}
  \label{def:mod.double} \\
   &\,= \double(z,z^\p) + \tfrac{1}{\vol \Sph^{n-1}}\psi(z^\p) \sum_{m=0}^{k-1} x \frac{|z|^m}{|z^\p|^{m+n}}C^{\frac{n}{2}}_m(\Theta) \;, \notag
 \end{align}
where again $(z,z^\p) \in \R^n_+ \times \R^{n-1}$, $z \neq z^\p$. The kernels $\single_k$ and $\double_k$ will be called the \emph{modified single} and \emph{double layer kernels}. In particular, and this is the essential part, we have removed the first $k$ summands in the expansions of the layer kernels at $|z^\p|=\infty$ in terms of homogeneous harmonic functions. For notational simplicity, we also write
 \begin{equation}\label{eq:zero.layers}
  \single_0 \deq \single \quad\text{ and }\quad \double_0 \deq \double \;.
 \end{equation}

The second set of layer kernels is given by further restriction to $z \in \R^{n-1}$. For $k \ge 0$ we define the \emph{(modified) single boundary layer kernels} by
 \begin{equation}\label{def:bdy.single.kernel}\begin{aligned}
  \calN_k &\deq \dlim \single_k = \single_k\sub{\R^{n-1}\times\R^{n-1}} \\
   &\,= \single(y,y^\p) - \tfrac{1}{(2-n)\vol\Sph^{n-1}}\psi(y^\p)
        \sum_{m=0}^{k-1} \frac{|y|^m}{|y^\p|^{m+n-2}} C^{\frac{n-2}{2}}_m(\Theta) \;,
 \end{aligned}\end{equation}
where $(z,z^\p) = (y,y^\p) \in \R^{n-1}\times\R^{n-1}$. In general, there is a \emph{(modified) double boundary layer kernel} $\calK_k$ as well. But one of the many simplifications in the situation of the half-space is that $\calK_k$ vanishes identically,
 \begin{equation}\label{def:bdy.double.kernel}
  \calK_k \deq \dlim \double_k = \double_k\sub{\R^{n-1}\times\R^{n-1}} = 0 \;.
 \end{equation}
Similarly to \cref{eq:plain.op}, the $\calN_k$ define operators, this time mapping functions on $Y$ to functions on $Y$,
 \[ \Op{\calN_k}f(z) \deq \int_Y \calN_k(z,z^\p) f(z^\p) \big|dz^\p\big| = (\pi^\partial_l)_*\Big(\preb{\calN_k} \, (\pi^\partial_r)^*f \Big)(z) \;, \]
where we make use of \cref{eq:fct.dens.ident.detail.2} to identify the right-hand side with a function on $Y$.

\myskip The (boundary) layer kernels define distributions on the respective double spaces, with singular supports contained in the respective diagonal. Following the philosophy of Melrose, as for instance described in \cite{Mel93}, we will lift these kernels to the resolved spaces constructed above and by doing so separate the singular behaviour of the kernels near boundary faces from that near the diagonal.

\subsection{The Layer Kernels}\label{ssec:layer.potentials}

Now, we will show that the (modified) layer kernels give rise to continuous maps between spaces of polyhomogeneous functions and study the resulting index families in detail. In particular, we will give examples of functions both giving rise to and \emph{not} giving rise to logarithmic behaviour of their layer potentials at $Z$.

\paragraph{Polyhomogeneity} Using simple calculations in local coordinates, see \Cref{APP.ssec:poly.calc}, we may show that the building blocks that make up the (modified) layer kernels lift to be polyhomogeneous functions on $\calPD$. Then, we identify these with $b$-densities via \cref{eq:kernel.density.ident}, dubbed the \emph{layer densities}, and determine their index families. Combined, we arrive at the following:

\begin{prop}\label{thm:kernels.phg} The \textit{(modified)} layer densities, defined via \cref{eq:kernel.density.ident}, lift to be polyhomogeneous $b$-densities on $\calPD$. More precisely:
 \begin{equation}\label{eq:kernels.phg}\begin{aligned}
  \bsingle &\in \calA^{(0,-1,-1,-n,1)}\big(\calPD,\bOmega\big) \qquad
  &\bdouble &\in \calA^{(1,0,1,1-n,0)}\big(\calPD,\bOmega\big) \\
  \bsingle_k &\in \calA^{(0,2-n-k,k-1,-n,1)}\big(\calPD,\bOmega\big) \qquad
  &\bdouble_k &\in \calA^{(1,1-n-k,k+1,1-n,0)}\big(\calPD,\bOmega\big)
 \end{aligned}\end{equation}
where the index families are written with respect to the ordering $\big(\lf(Y),\lf(Z),\rf(\Gamma),\bff,\df\big)$ of boundary hypersurfaces of $\calPD$.
\end{prop}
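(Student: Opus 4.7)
The strategy is to verify polyhomogeneity and read off the orders face by face via local coordinate computations on $\calPD$, then combine the kernel order with the order of the $b$-density factor $\pi_l^*(\mu)\cdot\pi_r^*(|dz^\p|)$. I would first handle the unmodified kernels $\single$ and $\double$, then the modified versions by analysing each subtracted summand of the multipole expansion and invoking \Cref{thm:hom.harm.decomp} for the cancellation property at $\rf_\calPD$.

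The basic building block $(x^2+|y-y^\p|^2)^{1/2}=|z-z^\p|$ is smooth and non-vanishing away from the boundary diagonal $\{x=0,\,y=y^\p\}\subset\lf_\calP(Y)$, which is a $p$-submanifold of codimension $n$ resolved by the blow-up defining $\df$. In projective coordinates $(\rho_\df,\tilde y,y^\p)$ with $x=\rho_\df$ and $y_i=y^\p_i+\rho_\df\tilde y_i$ one finds $|z-z^\p|=\rho_\df(1+|\tilde y|^2)^{1/2}$, so $|z-z^\p|^{2-n}=\rho_\df^{2-n}$ times a smooth positive function near $\df$; near the interior of $\lf_\calPD(Y)$ it reduces to $|y-y^\p|^{2-n}$ and is of order $0$. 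At the three faces at infinity, $|z-z^\p|$ is bounded below and, in the adapted coordinates of \Cref{SK.eq:corner.coordinates}, behaves like $|z|\sim\rho^{-1}$ at $\lf_\calPD(Z)$, like $|z^\p|\sim r'^{-1}$ at $\rf_\calPD$, and like $\rho_\bff^{-1}$ at $\bff_\calPD$ in either projective chart; hence $|z-z^\p|^{2-n}$ has orders $(0,n-2,n-2,n-2,2-n)$, and $x|z-z^\p|^{-n}$ has orders $(1,n-1,n,n-1,1-n)$ after accounting for the orders of $x$, which are $(1,-1,0,-1,1)$. Writing the density factor as $\pi_l^*(\mu)\cdot\pi_r^*(|dz^\p|)=x^{-1}\cdot\beta_\calPD^*(|dz\,dz^\p|)$ and applying \Cref{APP.thm:b.exponents}, one obtains orders $(0,1-n,1-n,2-2n,n-1)$; the contribution $n-1$ at $\df$ arises as the codimension-$n$ blow-up Jacobian of $|dz\,dz^\p|$ minus one from $x^{-1}$. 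Summing kernel and density orders yields the stated index families for $\bsingle$ and $\bdouble$.

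For the modified kernels, each subtracted summand $\psi(z^\p)\,|z|^m\,|z^\p|^{-(m+n-2)}\,C^{(n-2)/2}_m(\Theta)$ (and its double-layer counterpart) factors into polyhomogeneous pieces: $|z|^m$ is of order $-m$ at $\lf_\calPD(Z)$ and $\bff_\calPD$; $|z^\p|^{-(m+n-2)}$ is of order $m+n-2$ at $\rf_\calPD$ and $\bff_\calPD$; the Gegenbauer polynomial in $\Theta=\langle z,z^\p\rangle/(|z|\,|z^\p|)$ extends smoothly to $\calPD$, since its argument is a bounded smooth function in each coordinate chart; and $\psi$ is smooth with support in $\{|z^\p|\ge 1\}$. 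By \Cref{thm:hom.harm.decomp} and the uniform convergence of the multipole expansion for $|z|<|z^\p|$, subtracting the first $k$ summands from $\single$ cancels the first $k$ coefficients in the expansion of $\single$ in powers of $r'=|z^\p|^{-1}$ at $\rf_\calPD$, so the remainder has kernel order at least $k+n-2$ there. Simultaneously, the $m=k-1$ subtracted term contributes growth of order $-(k-1)$ at $\lf_\calPD(Z)$, lowering the kernel order there to $1-k$. The factors $|z|^m$ and $|z^\p|^{-(m+n-2)}$ balance at $\bff_\calPD$ (each subtracted term having order $n-2$), so no improvement occurs at $\bff_\calPD$, and the smooth subtracted terms leave the orders at $\df$ and $\lf_\calPD(Y)$ unchanged. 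Adding the density orders gives the claimed index families; the computation for $\double_k$ is parallel, with the extra factor of $x$ shifting the kernel orders exactly as in the unmodified case.

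The main obstacle is the book-keeping at $\bff_\calPD$: one must work in a projective chart such as $(\rho,s,\vartheta,\omega')$ with $r'=s\rho$, and verify that the order of each building block splits consistently between $\lf_\calPD(Z)$ and $\bff_\calPD$ (respectively $\rf_\calPD$ and $\bff_\calPD$), so that the total orders agree in either projective patch covering $\bff_\calPD$. A secondary delicate point is the Jacobian computation at $\df$: one has to verify that the codimension-$n$ blow-up of the boundary diagonal contributes a factor $\rho_\df^{n-1}$ to the lifted smooth density, which accounts for the otherwise surprising shift from kernel order $2-n$ to $b$-density order $1$ at $\df$.
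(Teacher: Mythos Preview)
Your proposal is correct and follows essentially the same route as the paper: compute the index families of the building blocks $|z-z^\p|^\lambda$, $x$, and the modification terms on $\calPD$, compute the index family of the density factor $\pi_l^*(\mu)\,\pi_r^*(|dz^\p|)$, and add; for the modified kernels, use the convergent multipole expansion near $\rf_\calPD$ to read off the improved order there, and the explicit subtracted terms to read off the lowered order at $\lf_\calPD(Z)$. The paper packages these same computations into four auxiliary lemmas (\Cref{APP.thm:x.phg,APP.thm:abs.phg,APP.thm:mod.phg,APP.thm:dens.phg}) rather than doing them inline, but your orders, your treatment of the density Jacobian at $\df$, and your cancellation argument at $\rf_\calPD$ all agree with the paper's.
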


\begin{proof} We start by considering the unmodified kernels. Using \Cref{APP.thm:x.phg,APP.thm:abs.phg} and the structure of spaces of polyhomogeneous functions, we see that the layer kernels lift to be polyhomogeneous on $\calPD$:
 \begin{equation}\label{eq:kernels.phg.pre}
  \beta_{\calPD}^*(\single) \in \calA^{(0,n-2,n-2,n-2,2-n)}\big(\calPD\big) \quad,\qquad
  \beta_{\calPD}^*(\double) \in \calA^{(1,n-1,n,n-1,1-n)}\big(\calPD\big) \;,
 \end{equation}
in each case with respect to the ordering $(\lf(Y), \lf(Z), \rf, \bff, \df)$. To obtain the $b$-densities $\bsingle$ and $\bdouble$, we multiply by $\pi_l^*(\mu)\pi_r^*(|dz^\p|)$, which by \Cref{APP.thm:dens.phg} is polyhomogeneous with index family $(0,1-n,1-n,2-2n,n-1)$. Combining these, we arrive at the first line of \cref{eq:kernels.phg}.

Regarding the modified kernels, note that $\rf = \rf_\calP(\Gamma)$ is the only face having a neighbourhood on which the expansions from \Cref{thm:hom.harm.decomp} converge. There, they take the form
 \begin{align}
    \single_k &= \tfrac{1}{(2-n)\vol \Sph^{n-1}} \,
                \sum_{m=k}^\infty \frac{|z|^m}{|z^\p|^{m+n-2}} C^\frac{n-2}{2}_m(\Theta)
    \quad,\qquad \label{eq:mod.at.infty.1}\\
    \double_k &= -\tfrac{1}{\vol\Sph^{n-1}} \,
                \sum_{m=k}^{\infty} x \frac{|z|^{m}}{|z^\p|^{m+n}} C^\frac{n}{2}_{m}(\Theta)
    \quad. \label{eq:mod.at.infty.2}
 \end{align}
By \Cref{APP.thm:mod.phg,APP.thm:dens.phg}, we get index families $(0,\bullet,k-1,-n,\bullet)$ for $\bsingle_k$ and $(1,\bullet,k+1,1-n,\bullet)$ for $\bdouble_k$, where a $\bullet$ denotes an index set we have not determined yet, as the corresponding faces do not intersect $\rf$. Away from $\rf$, we use \cref{def:mod.single}, \cref{def:mod.double} and \Cref{APP.thm:mod.phg,APP.thm:dens.phg} as well as \cref{eq:kernels.phg.pre}. These give index families $(0,2-n-k,\bullet,-n,1)$ for $\bsingle_k$ and $(1,1-n-k,\bullet,1-n,0)$ for $\bdouble_k$. Altogether, we obtain the second line of \cref{eq:kernels.phg}.
\end{proof}

Having \Cref{thm:kernels.phg} at hand, the Push-Forward and Pull-Back Theorems, \Cref{APP.thm:pbt,APP.thm:pft}, provide bounds on the index sets of polyhomogeneous functions for their layer potentials to be defined. Moreover, these theorems prove polyhomogeneity of these layer potentials. But before we state this more precisely and determine the index families, observe the following: The Push-Forward Theorem does in general not give optimal index families for a specific combination of $b$-density and $b$-fibration. In particular, it allows for logarithmic terms that might in fact not arise. We will use the following two lemmata to improve the index families given by the Push-Forward Theorem in our special case.

\paragraph{Appearance of Logarithmic Terms} In order to determine (or improve) index sets, we need to determine the singular asymptotics of certain integrals. This is approached in different ways in \cite{Mel92} and \cite{bs.sal}, in \cite{GG01} both approaches are reviewed and compared. Explicit formulae for the coefficients can be obtained from \cite{bs.sal} for instance, and this is carried out in one important example in \cite{gohar}: Denote by $(x,y)$ and $t$ coordinates in $\R_+^2$ respectively $\R_+$ and let $F : \R_+^2 \too \R_+$, $F(x,y) = x y$. This is a $b$-fibration with exponent matrix $(1,1)^T$. Now if $u = \widetilde{u}(x,y) \bdens{x}\bdens{y}$ is a compactly supported $b$-density on $\R_+^2$, polyhomogeneous with respect to an integer index family $(k,l)$,
 \[ u \sim \sum_{i \ge k, j \ge l} x^i y^j \,u_{ij}\, \bdens{x}\bdens{y} \;, \]
then the push-forward of $u$ along $F$,
 \begin{equation}\label{eq:pft.sal.ex}
  ( F_* u)(t) = \left(\int_0^\infty \widetilde{u}(x,\tfrac{t}{x}) \bdens{x} \right) \bdens{t} \;,
 \end{equation}
is a polyhomogeneous $b$-density on $\R_+$, with respect to the index set $k \bcup l$, as follows from the Push-Forward Theorem, for instance. From \cite{gohar}, we obtain the following:

\begin{lemma}\label{thm:logs.exa} Let $F$ and $u$ be as above with $k$, $l \in \Z$. Then,
 \[ (F_*u)(t) \sim \sum_{j \ge \min\{k,l\}} t^j \big( a_j + b_j \log t \big) \;, \]
where the coefficients $b_j$ satisfy $b_j = - u_{jj}$, and $F_*u$ is polyhomogeneous with respect to the integer index set $k \cup l$ if and only if $u_{jj} = 0$ for all $j$. If $k$, $l \ge 0$, the leading order coefficient $a_0$ is given by
 \[ a_0 = \int_{\R_+} \widetilde{u}\big|_{y=0}\tfrac{dx}{x} + \int_{\R_+} \widetilde{u}\big|_{x=0} \tfrac{dy}{y} \;. \]
\end{lemma}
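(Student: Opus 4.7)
The plan is to evaluate the one-variable integral
\[
I(t) = \int_0^\infty \widetilde u(x, t/x)\,\tfrac{dx}{x}
\]
directly by feeding in the polyhomogeneous expansion of $\widetilde u$. Pick a cutoff $\chi$ that is identically $1$ near $0$ and decompose $\widetilde u = \chi(x)\chi(y)\,\widetilde u + (1 - \chi(x)\chi(y))\,\widetilde u$. The interior piece contributes a smooth function of $t$, since for small $t$ the curve $\{y = t/x\}$ meets the support of $(1 - \chi\chi)\widetilde u$ only in regions where this function is smooth in both variables; its value at $t = 0$ is $\int(1-\chi(x))\widetilde u(x, 0)\,\tfrac{dx}{x} + \int(1-\chi(y))\widetilde u(0, y)\,\tfrac{dy}{y}$ and it produces no logarithmic terms. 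Up to a remainder that, for any $N$, is $O(t^N)$ in $I(t)$, the corner piece reduces term-by-term to the model integrals
\[
J_{ij}(t) = t^j \int_0^\infty \chi(x)\chi(t/x)\, x^{i - j}\,\tfrac{dx}{x}, \qquad i \ge k,\; j \ge l.
\]

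The key step is to split each $J_{ij}$ at the fixed point $x = \sqrt t$ of the involution $x \mapsto t/x$ and apply this involution to the lower half. For small $t$ the cutoff $\chi(t/x)$ becomes identically $1$ on the integration range, so
\[
J_{ij}(t) = t^j B_{i - j}(t) + t^i B_{j - i}(t),
\qquad
B_p(t) := \int_{\sqrt t}^\infty \chi(x)\, x^{p - 1}\,dx,
\]
and a direct computation (using that $\chi$ is constant near $0$) gives $B_p(t) = c_p - \tfrac{1}{p}\, t^{p/2}$ for $p \ne 0$ and $B_0(t) = -\tfrac{1}{2}\log t + c_0$. In the sum $t^j B_{i - j}(t) + t^i B_{j - i}(t)$ the half-integer powers $t^{(i + j)/2}$ cancel exactly when $i \ne j$, leaving integer-power contributions $c_{j - i}\, t^i + c_{i - j}\, t^j$; when $i = j$ one instead gets $J_{ii}(t) = -t^i \log t + 2 c_0\, t^i$. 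Summing back with the coefficients $u_{ij}$ then proves $I(t) \sim \sum_{j \ge \min\{k, l\}} t^j(a_j + b_j \log t)$ with $b_j = -u_{jj}$, and polyhomogeneity with respect to the integer index set $k \cup l$ precisely when all $u_{jj}$ vanish. For $k, l \ge 0$, the $t^0$-coefficient is obtained by collecting the contributions of $J_{ij}$ with $i = 0$ or $j = 0$, plus the diagonal $(0, 0)$-term, and adding the $t = 0$ value of the interior piece; these recombine into the Hadamard finite parts of $\int \widetilde u(x, 0)\,\tfrac{dx}{x}$ and $\int \widetilde u(0, y)\,\tfrac{dy}{y}$, which are ordinary convergent integrals when $k, l \ge 1$.

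The main obstacle is the book-keeping at the diagonal terms and the cancellation of fractional powers off the diagonal: one must verify that the doubling of the $-\tfrac{1}{2}$ factor in $B_0$ produces exactly $-u_{jj}$ with no cross-contributions from nearby $(i, j)$-pairs, and that the constant terms reassemble into the stated finite-part formula. A conceptually clean cross-check is the Mellin identity $\widehat I(s) = \widehat u(s, s)$, where $\widehat u$ is the two-dimensional Mellin transform of $\widetilde u$: the poles of $\widehat u$ at $(\zeta, \eta) = (-i, -j)$ pass to simple poles of $\widehat u(s, s)$ at $s = -i$ and $s = -j$ when $i \neq j$, merging into a double pole only when $i = j$, and a standard residue computation recovers the claimed formulas.
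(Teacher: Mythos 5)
The paper does not prove this lemma itself --- it is cited from \cite{gohar} --- so there is no in-text proof to compare against. Your argument is correct in substance and supplies a self-contained elementary derivation. The reflection trick (split the model integral at the fixed point $x=\sqrt t$ of the involution $x\mapsto t/x$, transfer the lower half to the upper half) is exactly the right mechanism: it identifies the $\log t$ coefficient as the residue of the degree-zero case $B_0(t) = -\tfrac12\log t + c_0$, which gets doubled, and shows that the half-integer powers $t^{(i+j)/2}$ cancel in pairs when $i\neq j$, so $J_{ij}(t)=c_{i-j}t^j+c_{j-i}t^i$ for $i\neq j$ and $J_{ii}(t)=-t^i\log t+2c_0t^i$. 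Summing against the corner coefficients $u_{ij}$ and noting that there is no cross-talk between distinct pairs gives $b_j=-u_{jj}$ and the ``no logs iff all $u_{jj}=0$'' criterion, and the Mellin identity $\widehat I(s)=\widehat u(s,s)$ you mention is a clean conceptual check that simple poles merge into a double pole precisely on the diagonal $i=j$.

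Two precision points are worth fixing. First, the ``interior piece'' $\int(1-\chi(x)\chi(t/x))\,\widetilde u(x,t/x)\,\tfrac{dx}{x}$ is in general \emph{polyhomogeneous in $t$ with no logarithms}, not smooth: for $k$ or $l<0$ it carries negative powers of $t$ coming from the one-sided expansions of $\widetilde u$ at the two faces. The property you actually need, and which does hold, is absence of logarithms, since away from the corner the fibre of $F$ meets each boundary hypersurface transversally and only one index set contributes at each. Second, the finite-part reading of the $a_0$-formula is the right caveat: as stated the two integrals diverge logarithmically when $u_{00}\neq 0$, and even when $u_{00}=0$ but $k=0$ or $l=0$ one must check that the $O(x^1)$ vanishing of $\widetilde u|_{y=0}$ at $x=0$ actually obtains (it does, since $u_{00}=0$ then forces it). For $k,l\ge1$ both integrals are honestly convergent and the bookkeeping you outline --- interior piece at $t=0$ plus the $t^0$-parts $\sum_{i\ge1}u_{i0}c_i$, $\sum_{j\ge1}u_{0j}c_j$ and $2c_0u_{00}$ from the model integrals --- does recombine into the stated sum. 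One should also note that the remainder after truncating the corner expansion of $\widetilde u$ at order $N$ contributes $O(t^{M(N)})$ with $M(N)\to\infty$; this is standard (the remainder vanishes to high order at both faces, and the involution keeps both factors $\gtrsim\sqrt t$ on the integration range), but it should be stated explicitly to close the argument.
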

There are similar formulae for the coefficients $a_j$, $j>0$, as well as in the general case $k$, $l \in \Z$, but we will not make use of these. \Cref{thm:logs.exa} shows that the logarithmic terms in the expansion of the push-forward $F_*u$ vanish if and only if the \emph{diagonal elements} $u_{jj} = -b_j$ vanish for all $j \in \Z$. For the unmodified layer potential densities, we will use the following criterion to ensure this, its proof is given in \Cref{APP.sec:logs}.

\begin{lemma}\label{thm:log.lemma} Let $N$ be a manifold with corners and
 \[ F : [0,1)^2 \times N \times \Sph^m \too [0,1) \times N \quad,\qquad
     (x_1,x_2,y,\theta) \mapstoo (x_1 x_2, y) \;. \]
Suppose that $u \in \calA^{\calE}\big([0,1)^2 \times N \times \Sph^m;\bOmega\big)$ has compact support and that $E_1 \deq \calE(\{x_1=0\})$ and $E_2 \deq \calE(\{x_2=0\})$ are integer index sets. If
 \begin{equation}\label{eq:log.cond.1.change} u(x_1,x_2,y,\theta) = -u(-x_1,-x_2,y,-\theta)
 \end{equation}
for all $(x_1,x_2,y,\theta)$, then $F_*u$ is polyhomogeneous at the face $\{0\} \times N$ with respect to the integer index set $E_1 \cup E_2$.
\end{lemma}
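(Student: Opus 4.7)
The plan is to reduce the statement to a parameterised version of Lemma~\ref{thm:logs.exa} by factoring $F$ through the projection off the sphere factor. Write $F = F_1 \circ \pi$, where $\pi : [0,1)^2 \times N \times \Sph^m \too [0,1)^2 \times N$ is the canonical projection and $F_1 : [0,1)^2 \times N \too [0,1) \times N$ sends $(x_1,x_2,y)$ to $(x_1 x_2, y)$. Since $\Sph^m$ is closed, $\pi$ is a $b$-fibration with compact fibres without boundary and the Push-Forward Theorem provides $\pi_* u \in \calA^{\calE'}\big([0,1)^2 \times N;\bOmega\big)$ with $\calE'(\{x_i = 0\}) = E_i$ for $i = 1, 2$; no logarithmic multiplicities or exponent shifts arise at these faces, as they are simply pulled back by $\pi$. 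The map $F_1$ is exactly the one appearing in Lemma~\ref{thm:logs.exa}, augmented by the spectator variable $y \in N$.

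Next, I expand $u$ near the corner $\{x_1 = x_2 = 0\}$ in the form
\[
  u \sim \sum_{i \in E_1,\, j \in E_2} x_1^i x_2^j\, u_{ij}(y,\theta)\, \bdens{x_1}\bdens{x_2}\, d\mu_N\, d\theta,
\]
where $d\mu_N$ is a reference $b$-density on $N$ and $d\theta$ is the round density on $\Sph^m$; integrality of $E_1$ and $E_2$ ensures that no logarithmic powers of $x_1, x_2$ appear. Because $\bdens{x_i}$ is invariant under $x_i \mapsto -x_i$, matching coefficients on either side of the symmetry condition \cref{eq:log.cond.1.change} yields, formally at the level of the asymptotic expansion,
\[
  u_{ij}(y,\theta) = (-1)^{i+j+1}\, u_{ij}(y,-\theta).
\]
On the diagonal $i = j$ this reads $u_{ii}(y,\theta) = -u_{ii}(y,-\theta)$, so every diagonal coefficient is odd under the antipodal map on $\Sph^m$. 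Since $d\theta$ is antipodally invariant, the diagonal coefficients of $\pi_* u$,
\[
  (\pi_* u)_{ii}(y) = \int_{\Sph^m} u_{ii}(y,\theta)\, d\theta,
\]
vanish identically for every $i \in E_1 \cap E_2$.

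To conclude, I would apply a parameterised version of Lemma~\ref{thm:logs.exa} to $F_{1*}(\pi_* u)$ with $y \in N$ treated as a spectator. That lemma identifies the logarithmic terms in the expansion of the push-forward at $\{0\} \times N$ with the diagonal coefficients of $\pi_* u$, which we have just shown to vanish. Therefore $F_* u = F_{1*}(\pi_* u)$ is polyhomogeneous at $\{0\} \times N$ with respect to the integer index set $E_1 \cup E_2$. The main technical obstacle is the parameterised extension of Lemma~\ref{thm:logs.exa}: one has to check that the identification $b_j(y) = -(\pi_* u)_{jj}(y)$ from \cite{gohar} persists uniformly in $y \in N$, with the smooth coefficients $a_j(y)$ depending polyhomogeneously on $y$. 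This essentially amounts to running the proof of Lemma~\ref{thm:logs.exa} (for instance via the change of variables $(x_1,x_2) \mapsto (\sqrt{x_1 x_2}, x_1/x_2)$ or the Mellin argument of \cite{bs.sal}) with $y$ carried along as an inert smooth parameter, which commutes with the relevant blow-up and integration operations.
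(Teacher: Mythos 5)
Your proof is correct and follows essentially the same strategy as the paper's: both factor $F$ through the multiplication map $(x_1,x_2)\mapsto x_1 x_2$ with $N$ and $\Sph^m$ as spectator/integration variables, both use the integer index sets to expand $u$ with coefficients $u_{ij}$, both deduce antipodal oddness of the diagonal coefficients from the symmetry \cref{eq:log.cond.1.change}, and both conclude by observing that the log coefficients $b_i$ of \Cref{thm:logs.exa} are the integrals of the $u_{ii}$ over $\Sph^m$ and hence vanish. The only cosmetic difference is that you explicitly factor through $\pi_\ast$ before applying \Cref{thm:logs.exa} whereas the paper writes the full push-forward as a single double integral and interchanges integration with asymptotic summation; your remark about carrying $y\in N$ along as an inert parameter matches the paper's implicit treatment.
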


\paragraph{Mapping Properties} Using \Cref{thm:logs.exa,thm:log.lemma}, in \Cref{thm:smooth.x,thm:smooth.x.mod} we show that the (modified) layer potential kernels give rise to functions that are smooth up to the boundary at $x=0$. Having thus improved the index sets at $Y$, we formulate the mapping properties of the layer potential operators between spaces of polyhomogeneous functions. As this is a direct consequence of the Push-Forward Theorem and \Cref{thm:smooth.x,thm:smooth.x.mod}, we do not give a proof but briefly sketch the calculus of index sets: As $\mathrm{null}(e_{\pi_l}) = \rf(Z)$, the integrability condition for $f \in \calA^{E}(Y)$ reads $\Re(E) > \pm 1 - k \eqd \alpha_\pm(k)$, where we take $+1$ for the single layer potentials and $-1$ for the double layer potentials, i.e.,
 \begin{equation}\label{def:alpha.k}
  \alpha_\pm(k) \deq \begin{cases} 1 - k &\text{for single layer potentials,} \\ -1-k &\text{for double layer potentials} \end{cases}
 \end{equation}
and use $k=0$ for the unmodified potentials. Observe that then, given $E$, the smallest $k$ for which the single/double layer potentials are defined on $\calA^E(Y)$ is given by the minimal $k \in \N$ so that $k > \pm 1 - \Re E$, that is by
 \begin{equation}\label{eq:min.k}
   k_\pm(E) \deq \min\{\, l \in \N \,|\, l > \pm 1 - \Re E\,\}
            = \max\{\, 0, \pm 1 + \lceil -\Re E \rceil \,\} \;,
 \end{equation}
where $\lceil t \rceil$ denotes the smallest integer greater than $t$. After using the Push-Forward Theorem to determine the index sets for the layer potentials of $f$, bear in mind that we still need to identify the resulting $b$-density with a function via \cref{eq:fct.dens.ident.detail.1} (though this does not change the index families).

\begin{thm}\label{thm:layer.ops.phg} Let $E$ be any index set for $Y$ and $f \in \calA^E(Y)$. If $E$ satisfies $\Re E > \alpha_\pm(k)$, the (modified) single and double layer potentials of $f$ are polyhomogeneous on $X$:
 \begin{equation}\label{eq:potentials.phg}\begin{aligned}
    \Op{\single}f &\in \calA^{(0,(E-1)\bcup(n-2))}(X) \\
    \Op{\double}f &\in \calA^{(0,E\bcup (n-1)}(X) \\
    \Op{\single_k}f &\in \calA^{(0,(E-1)\bcup(1-k))}(X) \\
    \Op{\double_k}f &\in \calA^{(0,E\bcup (-k))}(X)
 \end{aligned}\end{equation}
where we use the ordering $(Y,Z)$ for boundary hypersurfaces of $X$ and the identification \emph{\cref{eq:fct.dens.ident.detail.1}}.
\end{thm}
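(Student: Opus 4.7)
The natural strategy, suggested by the sketch preceding the theorem, is to read $\Op{\single_k}f$ and $\Op{\double_k}f$ via \cref{eq:final.op.ident} as a composition of the pull-back $\pi_r^*$, multiplication by the appropriate layer density, and the push-forward $(\pi_l)_*$, and then to apply the Pull-Back and Push-Forward Theorems (\Cref{APP.thm:pbt,APP.thm:pft}) in sequence, with \Cref{thm:kernels.phg} as the key input.

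I would proceed in three steps. First, the Pull-Back Theorem produces $\pi_r^*f \in \calA^\calE(\calPD)$ with an index family $\calE$ that records how $\pi_r$ hits $\Gamma = \partial Y$: the index $E$ appears at those faces of $\calPD$ whose image under $\pi_r$ lies in $\Gamma$ (namely $\rf$, $\bff$ and the diagonal face $\df$), while $\lf(Y)$ and $\lf(Z)$ carry the trivial integer index $0$. Second, multiply by $\bsingle_k$ or $\bdouble_k$; by \Cref{thm:kernels.phg} the product is polyhomogeneous with index family obtained componentwise from \cref{eq:kernels.phg}. Third, push forward along $\pi_l$: since $\rf$ is the only face in $\mathrm{null}(e_{\pi_l})$, the only integrability condition is positivity of the exponent at $\rf$, and inspection of \cref{eq:kernels.phg} shows that this is precisely $\Re E > \alpha_\pm(k)$. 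The Push-Forward Theorem then produces a polyhomogeneous $b$-density on $X$ whose index at $Y$ is inherited from $\lf(Y)$ and whose index at $Z$ is the extended union of the contributions from $\lf(Z)$ and $\bff$. Converting back to a function via \cref{eq:fct.dens.ident.detail.1}, which shifts the index at $Z$ by $n-1$ while fixing the index at $Y$, yields exactly \cref{eq:potentials.phg}.

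The main obstacle is in the last bookkeeping step: the Push-Forward Theorem on its own only guarantees a polyhomogeneous index at $\lf(Y)$ of the form $\{0,1,2,\ldots\}$ with possibly logarithmic factors, whereas \cref{eq:potentials.phg} claims smoothness up to $Y$. Ruling out both the nonzero powers and the logarithms requires the cancellation criterion of \Cref{thm:log.lemma} together with the explicit diagonal-coefficient formula of \Cref{thm:logs.exa}, and the implementation for the specific layer kernels at hand is deferred to \Cref{thm:smooth.x,thm:smooth.x.mod}. Once those are in hand the remainder of the argument is a routine exercise in the calculus of index sets, which is why the paper only sketches the latter.
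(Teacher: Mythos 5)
Your proposal is correct and follows essentially the paper's own route: the index calculus via the Pull-Back Theorem, \Cref{thm:kernels.phg} and the Push-Forward Theorem, with the integrability condition read off at the null face $\rf$ and smoothness up to $Y$ supplied by \Cref{thm:smooth.x,thm:smooth.x.mod} (which rest on \Cref{thm:logs.exa,thm:log.lemma}) --- exactly the sketch the paper gives in lieu of a proof. Two minor bookkeeping slips, neither of which affects the conclusion: by \Cref{APP.thm:b.exponents} the face $\df$ is mapped by $\pi_r$ onto all of $Y$, not into $\Gamma$ (its row of $e_{\pi_r}$ is $0$), so $\pi_r^*f$ carries the trivial index $0$ there rather than $E$; this is harmless because $\df$ contributes neither to the index at $Z$ nor to the integrability condition, and the index at $Y$ is settled by \Cref{thm:smooth.x,thm:smooth.x.mod} anyway. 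Also, smoothness up to $Y$ means the index set $0=\N_0$, so what must be removed at $Y$ is only the logarithmic part of the extended union $0\bcup 1$ (replacing it by $0\cup 1=0$); the nonzero integer powers are permitted and need not be ruled out.
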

For instance, in case we have $f \in \calA^{l}(Y)$ for $l \in \Z$ and $l < 0$, we may use $k_\pm(l) = \pm 1 + 1 - l$ and deduce that
 \[ \Op{\single_{2-l}}f \in \calA^{(0,(l-1)\bcup(l-1))}(X) \quad\text{ and }\quad
    \Op{\double_{-l}}f \in \calA^{(0,l\bcup l)}(X) \;. \]
That is, the layer potentials of $f$ grow like $|z|^{1-l} \log |z|$ respectively $|z|^{-l} \log |z|$ near the face $Z$.

\paragraph{Examples} Thus, we have been able to exclude the appearance of logarithmic terms at the face $Y \subset X$ but not at the face at infinity, $Z$. The latter will in fact not be possible as for arbitrary data $f \in \calA^E(Y)$, the layer potentials will in general have logarithmic terms in their expansions near $Z$. In the case that $E$ is an integer index set, we will now use \Cref{thm:logs.exa} to produce a necessary and sufficient condition for the absence of logarithmic terms and use this to produce two examples.

\begin{prop}\label{thm:logs.condition} Let $f \in \calA^{l}(Y)$ for $l \ge \alpha_\pm(0)$. The unmodified layer potentials of $f$ are polyhomogeneous with respect to an integer index set at $Z$ if and only if
 \begin{equation}\label{eq:log.condition}
  \int_{\Sph^{n-2}} f_j(\omega^\p)
    C^{\frac{\bullet}{2}}_{j+1-n} (\scal{\theta}{\omega^\p})
     d\omega^\p = 0
  \quad \text{for $j \ge n-1$,}
 \end{equation}
for all $\theta \in \Sph^{n-1}_+$ and where a $\bullet$ stands for $n-2$ respectively $n$ in the case of the single respectively double layer potential and the $f_j$ are given by the polyhomogeneous expansion of $f$ at $\Gamma$, $f \sim \sum_{j \ge l} (r^\p)^j f_j(\omega^\p)$.
\end{prop}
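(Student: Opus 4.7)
The plan is to apply Lemma \ref{thm:logs.exa} locally near $\bff \cap \lf(Z) \subset \calPD$ --- the unique locus where two image faces of $\pi_l$ collide in $Z$ --- and to extract the diagonal coefficients from the multipole expansion of the kernel. Away from $\Gamma \subset Z$, only $\lf(Z)$ contributes to the $Z$-asymptotics and its index in $\bsingle$ (resp.~$\bdouble$) is already an integer, so the $\bcup$-operation producing the logarithmic terms in Theorem \ref{thm:layer.ops.phg} can only be triggered at $\Gamma$, i.e.\ by the simultaneous presence of $\bff$.

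Near $\bff \cap \lf(Z)$, I would use projective coordinates $(\chi, r', \tau, \omega, \omega')$ on $\calPD$, where $r'$ is a bdf for $\bff$, $\tau = r/r'$ a bdf for $\lf(Z)$, and $\chi = xr$ a bdf for $\lf(Y)$. In this chart $\pi_l$ sends $(\chi, r', \tau, \omega, \omega')$ to $(\chi, r'\tau, \omega)$, so the push-forward is precisely the product map $F(r', \tau) = r'\tau$ required by Lemma \ref{thm:logs.exa}, with $(\chi, \omega)$ passing through as parameters and $\omega' \in \Sph^{n-2}$ integrated out over the angular fibre. A direct calculation using $|z-y'|^2 = r^{-2}H$, with $H = \chi^2+1-2\Theta\tau+\tau^2$ and $\Theta = \langle\omega,\omega'\rangle$, gives
\[ \bsingle\cdot\pi_r^*f = c_n\, (r')^{-n}\tau^{-1}\, H^{(2-n)/2}\, \sum_{j \ge l} (r')^j f_j(\omega')\,\bdens{\chi}\bdens{r'}\bdens{\tau}|d\omega d\omega'|, \]
with $c_n = 1/\bigl((2-n)\vol\Sph^{n-1}\bigr)$. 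The Gegenbauer generating function expands $H^{(2-n)/2}$ at $\tau = 0$ as $\sum_{m \ge 0} C^{(n-2)/2}_m\bigl(\Theta/\sqrt{\chi^2+1}\bigr)(\chi^2+1)^{-(n-2+m)/2}\tau^m$, so the coefficient of $(r')^{j-n}\tau^{m-1}$ in the $b$-density above is exactly $c_n\, C^{(n-2)/2}_m\bigl(\Theta/\sqrt{\chi^2+1}\bigr)(\chi^2+1)^{-(n-2+m)/2} f_j(\omega')$.

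Lemma \ref{thm:logs.exa} then identifies the coefficient of $r^p\log r$ in the push-forward with $-1$ times the integral over $\omega'$ of the diagonal coefficient; the diagonal constraint $j-n = m-1 = p$ forces $m = j-n+1 \ge 0$ and hence $j \ge n-1$. Converting from $b$-densities to functions via \cref{eq:fct.dens.ident.detail.1}, the coefficient of the $\rho^{j-1}\log\rho$ term in $\Op{\single}f$ equals, up to a non-vanishing smooth prefactor, the integral
\[ \int_{\Sph^{n-2}} C^{(n-2)/2}_{j-n+1}\bigl(\langle\omega,\omega'\rangle/\sqrt{\chi^2+1}\bigr)\, f_j(\omega')\, |d\omega'|. \]
Under the identification of $(\chi, \omega)$ with $\theta \in \Sph^{n-1}_+$ given by $\theta_1 = \chi/\sqrt{\chi^2+1}$, $\bar\theta = \omega/\sqrt{\chi^2+1}$, one checks that $\langle\theta,\omega'\rangle = \langle\omega,\omega'\rangle/\sqrt{\chi^2+1}$, so that the vanishing of the displayed coefficient as $(\chi, \omega)$ --- equivalently $\theta \in \Sph^{n-1}_+$ --- varies is exactly \cref{eq:log.condition}. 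The argument for $\Op{\double}f$ is formally identical: expanding $|z-y'|^{-n} = r^n H^{-n/2}$ replaces the Gegenbauer order $(n-2)/2$ by $n/2$, and the explicit $\chi$-prefactor in $\bdouble$ only produces the overall $\theta_1$-weight absorbed in the statement.

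The main technical point is the geometric identification of the projective coordinates $(\chi, \omega)$ on $\calPD$ with the spherical coordinate $\theta \in \Sph^{n-1}_+$ on $Z$ --- which is what guarantees that the local diagonal-coefficient analysis at $\bff \cap \lf(Z)$ captures the log coefficient at every $\theta \in \Sph^{n-1}_+$, not only at $\Gamma$ --- together with keeping track of the $(1-n)$-shift between $b$-density and function index sets at $Z$ induced by \cref{eq:fct.dens.ident.detail.1}.
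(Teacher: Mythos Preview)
Your proposal is correct and follows essentially the same route as the paper: localize to the corner $\lf(Z)\cap\bff$, use projective coordinates $(r',\,\tau=r/r',\,\chi=xr,\,\omega,\,\omega')$, expand the kernel via the Gegenbauer generating function (equivalently, the multipole expansion valid for $|z'|<|z|$), and read off the diagonal coefficients $m=j+1-n$ via Lemma~\ref{thm:logs.exa}. Your explicit identification of $(\chi,\omega)$ with $\theta\in\Sph^{n-1}_+$ is exactly what the paper achieves by noting $\Theta=\langle\theta,\omega'\rangle$ and then saying the calculation is to be repeated near the interior of $\lf(Z)\cap\bff$; the only slip is the opening claim that logarithms are triggered ``only at $\Gamma$'', since in fact the corner $\lf(Z)\cap\bff$ fibres over all of $Z$ under $\pi_l$, but this does not affect your actual computation.
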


\begin{prop}\label{thm:logs.condition.mod} Let $f \in \calA^l(Y)$, for some $l \ge \alpha_\pm(k)$. The modified layer potentials of $f$, $\Op{\single_k}f$ and $\Op{\double_k}f$, are polyhomogeneous with respect to an integer index set at $Z$ if and only if
 \begin{equation}\label{eq:log.condition.mod}\begin{aligned}
  \int_{\Sph^{n-2}} f_j(\omega^\p)
    C^{\frac{\bullet}{2}}_{j+1-n} \big(\scal{\theta}{\omega^\p}\big)
     d\omega^\p &= 0
  &&\quad \text{for $j \ge n-1$ \; and} \\[2ex]
  \int_{\Sph^{n-2}} f_j(\omega^\p)
    C^{\frac{\bullet}{2}}_{\alpha_\pm(0)-j} \big(\scal{\theta}{\omega^\p}\big)
     d\omega^\p &= 0
  &&\quad \text{for $\alpha_\pm(k)+1 \le j \le \alpha_\pm(k) + k$,}
 \end{aligned}\end{equation}
for all $\theta \in \Sph^{n-1}_+$ and where a $\bullet$ stands for $n-2$ respectively $n$ in the case of the single respectively double layer potential and the $f_j$ are given by the polyhomogeneous expansion of $f$ at $\Gamma$, $f \sim \sum_{j \ge l} (r^\p)^j f_j(\omega^\p)$.
\end{prop}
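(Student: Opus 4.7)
The argument will parallel Proposition~\ref{thm:logs.condition}, applying Lemma~\ref{thm:logs.exa} to the push-forward $(\pi_l)_*\bigl(\bsingle_k\,(\pi_r)^*f\bigr)$ (respectively the double-layer version) near $Z$. The first step is to localise: the face $Z \subset X$ is the image under $\pi_l$ of $\lf(Z) \cup \bff$, and in local coordinates adapted to the intersection $\lf(Z) \cap \bff$ (with bdfs $s$ for $\lf(Z)$ and $r^\p$ for $\bff$, so that $s r^\p$ is a bdf for $Z$), $\pi_l$ takes precisely the model form of Lemma~\ref{thm:logs.exa}. The diagonal face $\df$ does not meet $\lf(Z)$, and $\rf$ only meets $\bff$ away from $\lf(Z)$, so the logarithmic coefficients of the push-forward at $Z$ are in bijection with the diagonal coefficients $u_{jj}$ of the joint polyhomogeneous expansion of $u = \bsingle_k\,(\pi_r)^*f$ at this corner.

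Next, I would compute the joint expansion. On a neighbourhood of $\lf(Z) \cap \bff$ the cut-off $\psi$ equals $1$ (since $r^\p$ small means $|z^\p|$ large), so by \cref{eq:mod.at.infty.1,eq:mod.at.infty.2} the modified kernels are represented by the multipole tails starting at $m = k$. Substituting $|z| = (s r^\p)^{-1}$, $|z^\p| = (r^\p)^{-1}$, $\Theta = \scal{\vartheta}{\omega^\p}$, and using $f \sim \sum_{j \geq l}(r^\p)^j f_j(\omega^\p)$, one obtains, after incorporating the density factors from \Cref{APP.thm:dens.phg}, a double series in $s$ and $r^\p$ whose coefficients are products of $f_j$ and Gegenbauer polynomials evaluated at $\Theta$.

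Matching the diagonal indices then produces two families of conditions. The first comes from pairing the tail of the multipole series with the generic part of the expansion of $f$: these are precisely the diagonal terms that already appeared in the unmodified case, indexed by $j \geq n-1$ and involving $C^{\bullet/2}_{j+1-n}$, reproducing the first line of \cref{eq:log.condition.mod} exactly as in Proposition~\ref{thm:logs.condition}. The second family is new to the modified setting and reflects the stronger index at $\lf(Z)$ (namely $2-n-k$ rather than $2-n$): the extra orders of singularity at $\lf(Z)$ opened up by the subtraction pair with the low-order coefficients $f_j$ to produce additional matching powers of $s$ and $r^\p$. Tracking these matchings using $\alpha_\pm$ from \cref{def:alpha.k} confines the new diagonal contributions to the range $\alpha_\pm(k)+1 \leq j \leq \alpha_\pm(k)+k$, and the associated Gegenbauer degrees become $\alpha_\pm(0) - j$ rather than $j + 1 - n$.

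The final step is to perform the angular integration in $\omega^\p \in \Sph^{n-2}$, with $\theta = \vartheta \in \Sph^{n-1}_+$ parameterising the limit direction, yielding the two sets of integral conditions in \cref{eq:log.condition.mod}. The main obstacle will be the bookkeeping of the second family of diagonal coefficients: one must verify that the subtractions in \cref{def:mod.single,def:mod.double} remove exactly those multipole contributions that would have appeared in the joint expansion outside the stated range, and that within $\alpha_\pm(k)+1 \leq j \leq \alpha_\pm(k)+k$ each value of $j$ produces precisely one independent linear condition on $f_j$ rather than several. Once this is established, the equivalence with the vanishing of the diagonal coefficients $u_{jj}$ follows directly from Lemma~\ref{thm:logs.exa}, completing the proof.
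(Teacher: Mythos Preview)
There is a genuine gap in your localisation step. You propose to represent the modified kernel near $\lf(Z) \cap \bff$ by the multipole tails \cref{eq:mod.at.infty.1,eq:mod.at.infty.2}, but those expansions converge only where $|z| < |z^\p|$, i.e.\ in a neighbourhood of $\rf$. Near $\lf(Z)$ the opposite inequality $|z^\p| < |z|$ holds (in your coordinates, $s = r/r^\p$ small means $|z|$ large compared to $|z^\p|$), so the tail series diverges there and cannot be used to read off the joint expansion in $s$ and $r^\p$. This is not a technicality that can be fixed by re-summation: the polyhomogeneous structure of $\single_k$ at $\lf(Z)$ is genuinely different from a truncated multipole in the ``wrong'' variable.

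The paper's proof avoids this by splitting $\single_k = \single - (\text{modification terms})$ and treating the two pieces separately near $\lf(Z) \cap \bff$. For $\single$ one uses the \emph{other} multipole expansion $|z-z^\p|^{-\lambda} = \sum_m |z^\p|^m |z|^{-m-\lambda} C^{\lambda/2}_m(\Theta)$, valid for $|z^\p|<|z|$; its diagonal terms give exactly the first line of \cref{eq:log.condition.mod}, reproducing \Cref{thm:logs.condition}. The modification terms are a \emph{finite} sum $\sum_{m=0}^{k-1} |z|^m |z^\p|^{-m-\lambda} C^{\lambda/2}_m(\Theta)$ of smooth functions, which one expands directly in $(r/r^\p, r^\p)$; their diagonal elements produce the second line of \cref{eq:log.condition.mod}. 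A key observation is that the two families of diagonal elements do not overlap (comparing exponents forces a nonnegative multipole index to be negative), so the conditions are independent. Your description of the second family as arising from a ``stronger index at $\lf(Z)$'' obscures this: the new conditions come concretely from the subtracted terms themselves, not from an abstract shift in the index set.
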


Please find proofs for \Cref{thm:logs.condition,thm:logs.condition.mod} at the end of \Cref{APP.sec:logs}. Using \cref{eq:log.condition,eq:log.condition.mod}, it is easy to derive explicit examples showcasing the appearance (and failure to appear) of additional logarithmic terms.

\begin{exa}\label{ex:logs} Let $n = 3$ and consider the functions
 \begin{equation}
  f(y^\p) = \frac{1}{1+|y^\p|^2} \quad\text{ and }\quad g(y^\p) = \frac{|y^\p|}{1+|y^\p|^4} \;.
 \end{equation}
Both $f$ and $g$ are smooth on $\mathring{Y} = \R^{n-1}$ and in particular, as
 \begin{equation}
  f(y^\p) = (r^\p)^2 \sum_{j=0}^\infty (-1)^j (r^\p)^{2j}
   \quad\text{ and }\quad
  g(y^\p) = (r^\p)^3 \sum_{j=0}^\infty (-1)^j (r^\p)^{4j}
 \end{equation}
for $|y^\p| > 1$, we have $f \in \calA^2(Y)$, $g \in \calA^3(Y)$. Thus, as $2 > \alpha_\pm(0) = \pm 1$, the unmodified single and double layer potentials of both $f$ and $g$ are well-defined. Since $C^{\frac{\bullet}{2}}_0 = 1$ and $f_2 = 1$, we have
 \begin{equation}
  \int_{\Sph^{n-2}} f_{2}(\omega^\p) \,
    C^{\frac{\bullet}{2}}_{0}\big(\scal{\theta}{\omega^\p}\big) \, d\omega^\p = \vol\big(\Sph^{n-2}\big)
  \neq 0 \;,
 \end{equation}
which shows that both layer potentials of $f$ will have logarithmic terms in their expansions at $Z$. On the other hand, as $C^{\frac{\bullet}{2}}_{j-2}(\scal{\theta}{\omega^\p})$ is an even resp. odd function of $\omega^\p$ if and only if $j$ is even or odd, respectively and because $g_{3+4j} = (-1)^j$ and equal to $0$ else, we see that
 \begin{equation}
  \int_{\Sph^{n-2}} g_j(\omega^\p) \,
    C^{\frac{\bullet}{2}}_{j+2}\big(\scal{\theta}{\omega^\p}\big) \, d\omega^\p = 0
 \end{equation}
for all $j \ge 2$. Hence the layer potentials of $g$ will not have logarithmic terms in their expansions.
\end{exa}

\begin{exa} Now suppose $f$ is a homogeneous polynomial on $\R^{n-1}$ of degree $-l > 1$. Clearly, $f$ is polyhomogeneous, $f \in \calA^l(Y)$, which shows that the unmodified layer potentials of $f$ are not defined. But the modified versions $\Op{\single_{k_+(l)}}$ and $\Op{\double_{k_-(l)}}$ are. The first set of conditions in \cref{eq:log.condition.mod} is vacious and the second gives a single condition:
 \[ \int_{\Sph^{n-2}} f_l(\omega^\p) C^{\frac{\bullet}{2}}_{\pm 1 - l}\big(\scal{\theta}{\omega^\p}\big)d\omega^\p = 0 \;, \]
where $f(y^\p) = (r^\p)^l f_l(\omega^\p)$. Choosing $f_l$ to have the same parity as $l$, the product $f_l \cdot C^{\frac{\bullet}{2}}_{\pm 1 - l}$ is odd in $\omega^\p$ and the condition is satisfied resulting in layer potentials that are polyhomogeneous with respect to the integer index sets $l-1$ respectively $l$.
\end{exa}

\subsection{The Boundary Layer Kernel}\label{ssec:bdy.kernel}

We now turn our attention to the boundary single layer kernels $\calN_k$. (Recall that the boundary double layer kernels $\calK_k$ all vanish identically.) Firstly, we show that they define maps between spaces of polyhomogeneous functions and then relate them to boundary values of layer potentials.

\paragraph{Polyhomogeneity} With calculations in local coordinates similar to those in \Cref{APP.ssec:poly.calc}, we can deduce that the (modified) boundary layer kernels give rise to polyhomogeneous conormal $b$-densities on $Y^2_b$. Recall that the spaces $\calI^{s,\calE}\big(Y^2_b,D(Y^2_b);\bOmega\big)$ are spaces of polyhomogeneous $b$-densities that have conormal singularities at the lifted diagonal $D(Y^2_b)$, cf.~\Cref{APP.def:phg.conormal}.

\begin{prop}\label{thm:bdy.kernel.phg} We have
 \[ \bcalN \in \calI^{-1,(-1,-1,-n)}\big(Y^2_b,D(Y^2_b);\bOmega\big) \quad\text{ and }\quad
    \bcalN_k \in \calI^{-1,(2-n-k,k-1,-n)}\big(Y^2_b,D(Y^2_b);\bOmega\big) \;, \]
where $k \in \N$ and index families are written with respect to the ordering $(\lf,\rf,\bff)$.
\end{prop}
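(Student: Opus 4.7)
The plan is to adapt the proof of \Cref{thm:kernels.phg} to the simpler double space $Y^2_b$, on which the diagonal is not blown up. Accordingly, I would verify two things separately: polyhomogeneity at the boundary hypersurfaces of $Y^2_b$ with the stated index families, and conormal order $-1$ at the lifted diagonal $D(Y^2_b)$.

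For the conormal order, the essential point is that $|y-y^\p|^{2-n}$ is a classical conormal distribution of order $-1$ at the diagonal in $\R^{n-1}\times\R^{n-1}$: its partial Fourier transform in the transverse variable is a constant multiple of $|\xi|^{-1}$, a classical symbol of order $-1$. Since $D(Y^2_b)$ is an interior $p$-submanifold of $Y^2_b$ meeting the boundary only at (the interior of) $\bff_Y$, and $\beta_Y$ is a diffeomorphism away from $\bff_Y$, this conormal order lifts unchanged to $Y^2_b$ and is preserved by multiplication with the smooth $b$-density factor. The subtracted terms in $\calN_k$ depend only on $|y|$, $|y^\p|$ and $\Theta=\scal{y}{y^\p}/(|y|\,|y^\p|)$, multiplied by $\psi(y^\p)$; they are smooth in a neighbourhood of $\{y=y^\p\}$ and hence preserve the conormal order for $\bcalN_k$.

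For the polyhomogeneity of the unmodified kernel at the boundary, I would apply \Cref{APP.thm:abs.phg} in local coordinates on $Y^2_b$: near $\lf_Y$ and $\rf_Y$ one has $|y-y^\p|\sim|y|$ respectively $|y^\p|$, giving vanishing orders $n-2$, while near $\bff_Y$ the projective chart $(r,\sigma=r^\p/r,\omega,\omega^\p)$ yields $|y-y^\p|^{2-n}=(r\sigma)^{n-2}|\sigma\omega-\omega^\p|^{2-n}$, contributing $n-2$ at both $\bff_Y$ (from $r$) and $\rf_Y$ (from $\sigma$). Combining the kernel indices $(n-2,n-2,n-2)$ with the $b$-density factor $(\pi_l^\partial)^\ast|dy|\,(\pi_r^\partial)^\ast|dy^\p|$ of index $(1-n,1-n,2-2n)$ from \Cref{APP.thm:dens.phg} then yields the stated density index $(-1,-1,-n)$.

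For $\bcalN_k$, near $\rf_Y$ the multipole expansion \cref{eq:multipole} converges uniformly, so there $\calN_k$ equals the convergent tail starting at $m=k$; \Cref{APP.thm:mod.phg} then improves the $\rf_Y$-kernel-order from $n-2$ to $k+n-2$, giving density index $k-1$. Away from $\rf_Y$, I would write $\calN_k=\calN-\psi(y^\p)\sum_{m=0}^{k-1}(\ldots)$; the subtracted terms contribute a growth factor $|y|^{k-1}=r^{-(k-1)}$ at $\lf_Y$, so the worst-case density index there becomes $(1-k)+(1-n)=2-n-k$, while the $\bff_Y$-index is unchanged at $-n$. The main obstacle, as in the proof of \Cref{thm:kernels.phg}, is the careful bookkeeping across the different regions of $Y^2_b$ (where $\psi(y^\p)=0$, where $\psi(y^\p)=1$, and the transition between them) and patching the local results into a single global index family; this is routine here since $Y^2_b$ has only three boundary hypersurfaces and no diagonal face to keep track of.
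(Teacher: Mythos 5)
Your overall strategy — verify polyhomogeneity at the boundary hypersurfaces and the conormal order at the lifted diagonal separately, then assemble via the $b$-density factor — is exactly what the paper does, and your coordinate computations near $\lf_Y$, $\rf_Y$ and near $\bff_Y$ away from the diagonal are correct, as is the bookkeeping for the modification terms and the observation that they are smooth near the diagonal.

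The gap is in the treatment of the corner $D(Y^2_b)\cap\bff_Y$. Your justification that the conormal order lifts unchanged relies on two facts: $\beta_Y$ is a diffeomorphism away from $\bff_Y$, and $D(Y^2_b)$ is an interior $p$-submanifold meeting the boundary only in $\mathring{\bff_Y}$. Neither of these addresses the region where the lifted diagonal hits $\bff_Y$, which is precisely where the blow-up is nontrivial. The definition \Cref{APP.def:phg.conormal} requires more there than just conormal order $-1$ in the interior: the polyhomogeneous expansion of the kernel at $\bff_Y$ must have coefficients that are themselves conormal (of the shifted order) with respect to $D(Y^2_b)\cap\bff_Y$. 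This is a genuine compatibility condition between the boundary expansion and the conormal singularity, and it is not automatic; it is the reason the $b$-double space is built the way it is, but it must still be verified for the kernel at hand. The paper does so by introducing coordinates $(r,t,\omega,R,\Omega)$ with $t=1-\tfrac{r^\p}{r}$, $R=|\omega-\omega^\p|$, $\Omega=R^{-1}(\omega-\omega^\p)$, valid near $\partial D(Y^2_b)$, in which $r$ is a boundary defining function for $\bff_Y$ and the lifted diagonal is $\{t=R=0\}$, and then observes that
\[ |y-y^\p|^{2-n} = r^{n-2}(1-t)^{n-2}\big[R^2 - 2tR\scal{\omega}{\Omega} + t^2\big]^{\tfrac{2-n}{2}} \]
has an expansion at $\bff_Y$ with a single coefficient, homogeneous of degree $2-n$ jointly in the transverse variables $(t,R)$; this is what guarantees the required conormal behaviour of the $\bff_Y$-expansion coefficients. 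Your proof should include this (or an equivalent) verification before the conclusion $\bcalN \in \calI^{-1,(-1,-1,-n)}$ is drawn. Once that is in place, the rest of your argument, including the modified case, goes through as you describe.
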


\begin{proof} First of all, note that the density $|dy dy^\p|$ pulls back to be a polyhomogeneous $b$-density on $Y^2_b$ with index family $(1-n,1-n,2-2n)$:
 \[\begin{aligned}
  \big|dy dy^\p\big|
   = r^{1-n} (r^\p)^{1-n} \big|\bdens{r} \bdens{r^\p} d\omega d\omega^\p \big|
   &= (r^\p)^{2-2n} \big(\tfrac{r}{r^\p}\big)^{1-n} \big|\bfracdens{r}{r^\p} \bdens{r^\p} d\omega d\omega^\p\big| \\
   &= \big(\tfrac{r^\p}{r}\big)^{1-n}r^{2-2n} \big|\bfracdens{r^\p}{r} \bdens{r} d\omega d\omega^\p\big|
 \end{aligned}\]
With regard to the unmodified kernel, in local coordinates near a product neighbourhood of $\rf$, we may write
 \[ |y-y^\p|^{2-n}
     = r^{n-2} \big(\tfrac{r^\p}{r}\big)^{n-2} \big|\big(\tfrac{r^\p}{r}\big)\omega - \omega^\p\big|^{2-n} \;, \]
which is certainly polyhomogeneous with index set $n-2$ at both $\rf$ and $\bff$, with bounded and smooth coefficients as long as we stay away from the lifted diagonal. Using coordinates $r^\p$, $\tfrac{r}{r^\p}$ instead, we obtain the same result for a neighbourhood of $\lf$ which does not intersect the lifted diagonal. Near the interior of the lifted diagonal, it is clear that $|y-y^\p|^{2-n}$ has a conormal singularity of order $-\dim Y -(2-n) = -1$. Then, letting $t = 1 - \tfrac{r^\p}{r}$, $R = |\omega - \omega^\p|$ and $\Omega = R^{-1}(\omega-\omega^\p)$, the set $(r, t, \omega, R, \Omega)$ gives local coordinates valid near the boundary of the lifted diagonal, where $r$ is a boundary defining function for $\bff$ and the lifted diagonal is given by $t = R = 0$. In these coordinates,
 \[ |y-y^\p|^{2-n} = r^{n-2} (1-t)^{n-2} \big[ R^2 - 2tR \scal{\omega}{\Omega} + t^2 \big]^{\tfrac{2-n}{2}} \;, \]
and we note that this gives a polyhomogeneous expansion at $\bff$ with a single coefficient, which is homogeneous of degree $2-n$, jointly in $(t,R)$. Consequently, we have
 \[ \bcalN \in \calI^{-1,(-1,-1,-n)}\big(Y^2_b,D(Y^2_b);\bOmega\big) \;. \]
As for the modified boundary layer densities, we again note that the modification terms are smooth in the interior of $Y^2_b$ (due to the harmonicity considerations in $z$ and the cut-off in $z^\p$) and lift to be polyhomogeneous on $Y^2_b$ with index families $(-m,m+n-2,n-2)$. Using a similar argument as in the case of the full layer kernels (i.e.~using the convergent multipole expansion near $\rf$), we see that, away from the lifted diagonal, $\calN_k$ is polyhomogeneous with index family $(1-k,n+k-2,n-2)$. As the conormal singularity at the diagonal is unchanged, multiplication with the $b$-density factor completes the proof.
\end{proof}

\Cref{thm:bdy.kernel.phg} in fact tells us that $\calN$ and $\calN_k$ define $b$-pseudodifferential operators, compare \Cref{APP.def:b.pseudo} or \cite[pp.~7]{GH14}, and thus bounded operators between spaces of polyhomogeneous functions and between weighted $b$-Sobolev spaces. For this, it is convenient to identify $\calN$ and $\calN_k$ with $b$-half-densities instead of $b$-densities. This can be done by simply replacing $\big|\bdens{r}\bdens{r^\p}d\omega d\omega^\p\big|$ with its square root. This is a smooth and non-vanishing $b$-half density and in particular, we do not alter the index families. We then use the corresponding smooth and non-vanishing $b$-density $\mathrm{dvol}_b = r^{n-1} |dy|$ to define the $b$-Sobolev spaces. But observe that, after applying the operators, we identify the result first with a $b$-density and then with a function via \Cref{eq:fct.dens.ident.detail.2}.

\begin{bprop}\label{thm:bdy.mapping}\begin{enumerate}
 \item Let $E$ be an index set for $Y$ so that $\Re E > 1-k$ for $k \in \N_0$. Then $\calN$ resp. $\calN_k$ define continuous operators
   \[ \Op{\calN} : \calA^E(Y) \too \calA^{(E-1) \bcup (n-2)}(Y) \quad\text{ and }\quad
      \Op{\calN_k} : \calA^E(Y) \too \calA^{(E-1) \bcup (1-k)}(Y) \;. \]
 \item If $\alpha$, $\beta$, $m \in \R$ satisfy $\alpha > 1$, $\beta < n-2$ and $\beta-\alpha \le -1$, then $\calN$ defines a bounded operator $$r^\alpha H_b^{m-1}(Y,\mathrm{dvol}_b) \too r^\beta H_b^{m}(Y,\mathrm{dvol}_b) \;.$$ It is compact if and only if $\beta-\alpha < -1$.
 \item If $\alpha$, $\beta$, $m \in \R$ satisfy $\beta < 1-k < \alpha$ and $\beta-\alpha \le -1$, then $\calN_k$ defines a bounded operator $$r^\alpha H_b^{m-1}(Y,\mathrm{dvol}_b) \too r^\beta H_b^{m}(Y,\mathrm{dvol}_b) \;.$$ It is compact if and only if $\beta-\alpha < -1$.
\end{enumerate}\end{bprop}
But unfortunately, there is so far no analytic Fredholm theory for general elliptic elements of the full calculus. Using such an extension of the theory, it would be possible to derive detailed Fredholm and invertibility statements for $\Op{\calN}$ and $\Op{\calN_k}$.

\paragraph{Jump Formulae} Next, we prove a variant of the classical relations \Crefrange{intro:jump.1}{intro:jump.3}. We have shown that the layer potentials are smooth up to $\{x=0\}$, which is to say that they have a well-defined limit as we approach $Y$, given by the leading term in their polyhomogeneous expansions at $Y$. The subleading order terms depend on the choices of coordinates, which is why we will express them in specific coordinates $(x,r,\omega)$ and $(xr,r,\omega)$, only. In the following, $\chi$ is a boundary defining function for $Y$ and we work in a trivialisation $[0,1)_{\chi} \times Y$ of a product neighbourhood $\mathcal{V}$ of $Y$ in $X$.

\begin{thm}\label{thm:formulae} Let $f \in \calA^E(Y)$ and $k = k_\pm(E)$ be chosen correspondingly for the single or double layer potentials. Suppose $\mathcal{V} \subset X$ and coordinates $(\chi,y)$ are chosen as in the preceding paragraph. Then, there are $R_k^\pm \in \dot{C}^0\big([0,1);\calA(Y)\big)$ so that on $\mathcal{V}$,
 \begin{align}
   \Op{\single_k}f(\chi,y) &= \Op{\calN_k}f(y) + R^+_k(\chi,y) \;,
    \label{eq:jump.single} \\
   \Op{\double_k}f(\chi,y) &= \phantom{-} \tfrac{1}{2}f(y) + R_k^-(\chi,y) \;,
    \label{eq:jump.double}\\
   \partial_{\nu}\big(\Op{\single_k}f\big)(\chi,y) &= -\tfrac{1}{2}f(y) - R_k^-(\chi,y) \;,
    \label{eq:jump.single.normal}
 \end{align}
where we write $\single_0 = \single$ etc. Moreover, in coordinates $(x,r,\omega)$ near $\mathring{Y}$ and $(xr,r,\omega)$ near $\Gamma$, we have:
 \begin{align}
  \Op{\single_k}f(x,r,\omega) &= \Op{\calN_k}f(r,\omega) + x \big( \tfrac{1}{2} \, f(r,\omega) \big)
   + x\, R_k^-(x,r,\omega)
   \label{eq:jump.single.local.1} \\
  \Op{\single_k}f(xr,r,\omega) &= \Op{\calN_k}f(r,\omega) + (xr) \big( \tfrac{1}{2r} \, f(r,\omega)\big)
   + (xr)\,\big(r^{-1} R_k^-(\tfrac{xr}{r},r,\omega)
   \label{eq:jump.single.local.2}
 \end{align}
\end{thm}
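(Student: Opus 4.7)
The plan is to read each of the three jump identities \cref{eq:jump.single}--\cref{eq:jump.single.normal} as a statement about the leading coefficient in the polyhomogeneous expansion at $Y$ of the respective layer potential, and to compute that coefficient directly from the push-forward description. By \Cref{thm:layer.ops.phg}, each of $\Op{\single_k}f$, $\Op{\double_k}f$ and $\partial_\nu \Op{\single_k}f$ is polyhomogeneous on $X$ with index set at $Y$ starting from $0$; in the product trivialisation $[0,1)_\chi \times Y$ of $\mathcal{V}$ this yields an expansion $u_0(y) + R(\chi,y)$ with $R \in \dot{C}^0\bigl([0,1);\calA(Y)\bigr)$, so all three claims reduce to identifying $u_0$ and showing it agrees with the predicted boundary value. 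The local coordinate formulae \cref{eq:jump.single.local.1,eq:jump.single.local.2} then follow by taking $\chi=x$ near $\mathring{Y}$ and $\chi=xr$ near $\Gamma$.

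For the single layer, $\bsingle_k$ has index $0$ at $\lf_\calPD(Y)$ and strictly positive indices at the hypersurfaces $\bff_\calPD$ and $\df$ meeting $\lf_\calPD(Y)$. In the push-forward formula $\Op{\single_k}f\cdot\mu = (\pi_l)_*(\bsingle_k \cdot (\pi_r)^* f)$, the leading coefficient at $Y$ is therefore obtained by restricting $\bsingle_k \cdot (\pi_r)^* f$ to $\lf_\calPD(Y)$ and fibre-integrating along the induced $b$-fibration $\lf_\calPD(Y) \cong Y^2_b \to Y$. By construction (\cref{def:bdy.single.kernel}) this restriction equals $\preb{\calN_k}\cdot (\pi_r^\partial)^* f$ under the identification $\lf_\calPD(Y) \cong Y^2_b$, and the fibre integral is $\Op{\calN_k}f(y)$, which gives \cref{eq:jump.single}.

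For the double layer, $\bdouble_k$ vanishes to first order at $\lf_\calPD(Y)$ but has index $0$ at the diagonal face $\df$, so the non-vanishing leading term at $Y$ comes entirely from the contribution of $\df$. Resolving $\df$ by the substitution $\tilde t = (y^\p - y)/x$, the unmodified kernel becomes $x^{-(n-1)}(1+|\tilde t|^2)^{-n/2}$; combined with the density factor $\pi_l^*\mu\cdot\pi_r^*|dz^\p| = \bdens{x}\,dy\cdot x^{n-1}\,d\tilde t$, fibre-integration at $x=0$ together with $\int_{\R^{n-1}}(1+|\tilde t|^2)^{-n/2}\,d\tilde t = \tfrac{1}{2}\vol\Sph^{n-1}$ produces the leading coefficient $\tfrac12 f(y)$. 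The modification terms in $\double_k$ are smooth in $z$ and supported away from $\Gamma$ by $\psi$, hence do not reach $\df$ and only contribute to the remainder. For the normal derivative of the single layer, the identity $\partial_\nu \single = -\double$ reduces \cref{eq:jump.single.normal} to the double-layer computation with opposite sign; the discrepancy $\partial_\nu \single_k + \double_k$ for $k\ge 1$ is a finite sum of products of harmonic polynomials in $z$ and $\psi$-cut-off functions of $z^\p$ vanishing to first order at $Y$, so its action on $f$ is smooth on $\mathcal{V}$ and $O(\chi)$ at $Y$, and is absorbed into the remainder.

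The main obstacle is the careful bookkeeping at $\df$: one has to track the pull-backs of $\mu$ and $|dz^\p|$ under $\beta_\calPD$, the index shifts induced by \cref{eq:fct.dens.ident.detail.1,eq:fct.dens.ident.detail.2}, and the interplay between the two adjacent faces $\lf_\calPD(Y)$ and $\df$ when applying the leading-coefficient part of \Cref{thm:logs.exa}. Once this is set up, the factor $\tfrac12$ falls out of the classical beta-function identity $\vol\Sph^{n-2}\cdot B\bigl(\tfrac{n-1}{2},\tfrac12\bigr) = \vol\Sph^{n-1}$, and the local coordinate expressions follow by direct substitution, the factor $\tfrac{1}{2r}$ in \cref{eq:jump.single.local.2} arising because $\tfrac{x}{2}f = (xr)\cdot\tfrac{f}{2r}$ when $xr$ is used as the boundary defining function near $\Gamma$.
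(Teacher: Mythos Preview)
Your proposal is correct and follows essentially the same route as the paper: reduce the jump relations to identifying the leading coefficient at $Y$ of a polyhomogeneous function, use the leading-coefficient formula from \Cref{thm:logs.exa} to see that this coefficient is obtained by restricting to the relevant face ($\lf_\calPD(Y)$ for $\single_k$, $\df$ for $\double_k$) and fibre-integrating, compute the $\tfrac12$ via the beta integral, and handle $\partial_\nu\Op{\single_k}f$ through the kernel identity $\partial_\nu\single = -\double$. The paper packages the same argument slightly differently, splitting the fibre $\pi_l^{-1}(\chi,y)$ explicitly into three pieces $A_0,A_1,A_2$ near $\lf(Y)\cap\df$, $\df$, and $\lf(Y)$ and treating each limit separately, whereas you invoke the $a_0$-formula more directly.

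Two small inaccuracies worth fixing. First, your claim that $\bsingle_k$ has ``strictly positive indices at the hypersurfaces $\bff_\calPD$ and $\df$ meeting $\lf_\calPD(Y)$'' is false at $\bff$ (the index there is $-n$, cf.\ \Cref{thm:kernels.phg}); fortunately this is irrelevant, since by the exponent matrix \cref{eq:b.boundary.exponents} only $\lf(Y)$ and $\df$ fibre over $Y$ under $\pi_l$, and it is the index $1$ at $\df$ alone that forces the $\df$-contribution to the $a_0$-term to vanish. Second, the modification terms in $\double_k$ are supported \emph{near} $\Gamma$ (where $\psi=1$), not away from it, and they do meet $\df$; the reason they do not contribute to the leading term is that as $b$-densities they vanish to order $n\ge 3$ at $\df$ (\Cref{APP.thm:mod.phg,APP.thm:dens.phg}), which is the point made in the paper's \Cref{thm:jump.lemma.4}.
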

The proof, given in \Crefrange{thm:jump.lemma.1}{thm:jump.lemma.4} in \Cref{APP.sec:bdy}, uses \Cref{thm:logs.exa} to explicitly calculate the leading order terms of the single and double layer potentials and uses the classical symmetry argument to prove \cref{eq:jump.single.normal}, which then in turn leads to \Crefrange{eq:jump.single.local.1}{eq:jump.single.local.2}. Note that, by definition of polyhomogeneity, \Cref{APP.def:phg}, this shows that the limits in \Cref{thm:intro.solns} are attained uniformly on compact subsets including all derivatives in directions tangent to $Y$.

\clearpage\appendix
\section{Calculations and Proofs}\label{APP.sec:calcs}

\subsection{Regarding Polyhomogeneity}\label{APP.ssec:poly.calc}

Having fixed adapted coordinates in $Y$ and $X$ in \cref{SK.eq:corner.coordinates}, we will use the corresponding (projective) adapted coordinates in $\calPD$. For instance, coordinates near the interior of $\lf(Y) \cap \df$ (that is, away from the face $\bff$), are given by $\big(\tfrac{x}{s}, s, \eta, y\big)$, where $s = \big|y-y^\p\big|$ and $\eta = \tfrac{y-y^\p}{s}$, and coordinates near $\rf \cap \lf(Y) \cap \bff$ are given by $\big(xr, r, \tfrac{r^\p}{r}, \omega, \omega^\p\big)$.

\begin{lemma}\label{APP.thm:b.exponents} Let $\pi_l$, $\pi_r$ be the compositions of the blow-down map $\calPD \xrightarrow{\;\beta_\calPD\;} X \times Y$ with the projections $\pr_l$, $\pr_r$ onto the left respectively right factor of $X \times Y$. $\pi_l$ and $\pi_r$ are $b$-fibrations and their exponent matrices are given by
 \begin{equation}\label{eq:b.boundary.exponents}
  e_{\pi_l} = \begin{BMAT}[0pt]{cc}{cc}
                 \begin{BMAT}(1.4em,1.4em){cc}{c} \;\;\hspace{-7pt}\scr{Y} & \scr{Z} \end{BMAT} & \\
                 \left(\begin{BMAT}(b,1.4em,1.4em){cc}{ccccc}
                   1 & 0  \\  0 & 1  \\ 0 & 0 \\   0 & 1  \\  1 & 0
                 \end{BMAT}\right)
                 & \begin{BMAT}(b,1.45em,1.45em){c}{bbbbb}
                     \rscr{\lf(Y)}{1} \\ \rscr{\lf(Z)}{2} \\ \rscr{\rf(\Gamma)}{1} \\ \rscr{\bff}{3} \\ \rscr{\df}{3}
                   \end{BMAT}
                \end{BMAT}
  \qquad\text{ and }\qquad
  e_{\pi_r} = \begin{BMAT}[0pt]{cc}{cc}
                 \begin{BMAT}(b,1.4em,1.4em){c}{c} \scr{\Gamma} \end{BMAT} & \\
                 \left(\begin{BMAT}(b,1.4em,1.4em){c}{ccccc}
                   0 \\ 0 \\ 1 \\ 1 \\ 0
                 \end{BMAT}\right)
                 & \begin{BMAT}(b,1.45em,1.45em){c}{bbbbb}
                     \rscr{\lf(Y)}{1} \\ \rscr{\lf(Z)}{2} \\ \rscr{\rf(\Gamma)}{1} \\ \rscr{\bff}{3} \\ \rscr{\df}{3}
                   \end{BMAT}
                \end{BMAT}
  \qquad.
 \end{equation}
\end{lemma}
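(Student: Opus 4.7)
The plan is to derive both the b-fibration property and the explicit exponent matrices by inheritance from the evident structure on the unblown-up product $X \times Y$, supplemented by local computations in projective coordinate charts on $\calPD$. To begin with, $\pr_l : X \times Y \to X$ and $\pr_r : X \times Y \to Y$ are b-fibrations whose exponent matrices consist of a single $1$ in each column: $\pr_l^* x$ and $\pr_l^* \rho$ vanish to first order at $Y \times Y$ and $Z \times Y$ respectively, and $\pr_r^* r'$ vanishes to first order at $X \times \Gamma$. Both blow-up centres used in constructing $\calPD$, namely $Z \times \Gamma \subset X \times Y$ and $D(\calP) \cap \lf_\calP(Y) \subset \calP$, are p-submanifolds contained in zero sets of boundary defining functions lifted from the targets of $\pr_l$ and $\pr_r$, and the standard lifting lemmas for b-maps ensure that $\pi_l = \pr_l \circ \beta_\calPD$ and $\pi_r = \pr_r \circ \beta_\calPD$ remain b-maps on $\calPD$.

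To pin down the entries of $e_{\pi_l}$ and $e_{\pi_r}$, I would work through projective coordinate charts adapted to each boundary hypersurface of $\calPD$. Away from $\bff$ and $\df$, the maps $\pi_l$, $\pi_r$ coincide with $\pr_l$, $\pr_r$ in the obvious charts, so that the entries at $\lf(Y)$, $\lf(Z)$ and $\rf$ in the respective columns are all $1$ and the others $0$, as required. For the front face $\bff$ of the blow-up of $Z \times \Gamma$, the two projective charts $(xr, r, \omega, s, \omega')$ with $s = r'/r$ and $(xr, t, \omega, r', \omega')$ with $t = r/r'$ give: in the first, $\pi_l^* r = r$ and $\pi_r^* r' = s \cdot r$; in the second, $\pi_l^* r = t \cdot r'$ and $\pi_r^* r' = r'$. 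These yield the entry $1$ at $\bff$ in both the $Z$-column of $e_{\pi_l}$ and the $\Gamma$-column of $e_{\pi_r}$, while the exponent $1$ at $\lf(Z)$ in the $Z$-column (from the second chart) and the exponent $1$ at $\rf$ in the $\Gamma$-column (from the first chart) also fall out. For the front face $\df$ of the second blow-up, the projective chart $(x/s, s, \eta, y)$ from the opening paragraph of this appendix yields $\pi_l^* x = (x/s) \cdot s$, giving the entry $1$ at $\df$ in the $Y$-column of $e_{\pi_l}$, while $\pi_l^* \rho$ and $\pi_r^* r'$ are smooth and nonvanishing there because the interior of the diagonal sits away from both $Z$ and $\Gamma$. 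Collating these chart-wise computations yields exactly the matrices in \cref{eq:b.boundary.exponents}.

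The b-fibration property then has two ingredients. B-normality is immediate from the computed matrices, since each row contains at most one nonzero entry, which means no boundary hypersurface of $\calPD$ is mapped into a corner of the target. B-submersion can be verified chart by chart by inspecting the b-differentials of $\pi_l$ and $\pi_r$ in each of the projective coordinates above, where the maps are products of monomials tensored with the identity on the tangential coordinates; alternatively, at a structural level, one uses that $\pr_l$ and $\pr_r$ are already b-submersions on $X \times Y$ and that both blow-up centres satisfy the hypotheses of the standard blow-up lifting results for b-submersions, each centre mapping into a single boundary hypersurface of the target in each case.

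The main obstacle is the chart-by-chart bookkeeping, especially at the multi-fold intersections $\bff \cap \df$, $\lf(Y) \cap \df$ and $\lf(Y) \cap \bff \cap \rf$, where consistency of exponents across overlapping projective charts must be checked; once one has the right set of charts in hand, the individual computations reduce to identifying products of monomials in the boundary defining functions.
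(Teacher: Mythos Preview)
Your proposal is correct and the b-fibration argument you offer as the ``structural'' alternative is exactly what the paper does: it quotes the fact that the composition of a b-fibration with the blow-down of a boundary p-submanifold is again a b-fibration (citing \cite{MM98} and \cite{GH09}), applied twice to the projections $\pr_l$, $\pr_r$.

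The main difference lies in how the exponent matrices are obtained. You compute them chart by chart in projective coordinates, which is perfectly sound but, as you acknowledge, requires bookkeeping at the higher-codimension corners such as $\bff \cap \df$. The paper avoids all of this with a single observation: because $\calPD$ is obtained from $X \times Y$ by homogeneous blow-ups of boundary p-submanifolds, the exponent matrix of $\beta_{\calPD}$ has entries only in $\{0,1\}$, and composing with the projections (whose exponents are also in $\{0,1\}$) gives $e_{\pi_\bullet}(G,H)=1$ if and only if $\pi_\bullet(G) \subset H$. The matrices then follow immediately from inspecting which faces of $\calPD$ sit over which faces of $X$ or $Y$, with no coordinate computations needed. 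Your approach has the merit of being self-contained and of making the local structure explicit (which is useful elsewhere in the paper anyway), while the paper's approach is shorter and scales more cleanly to iterated blow-ups.
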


\begin{proof} Since we have homogeneously blown up the pull-backs of distinct boundary faces of $X \times Y$, namely $Z \times \Gamma$ and the lift of the diagonal, the exponent matrix $e_{\beta_\calPD}$ has entries in $\{0,1\}$. In particular, $e_{\beta_\calPD}(G,H) = 1$ if and only if $\beta_\calPD(G) \subset H$. A projection onto one of the factors of $X \times Y$ is a $b$-map with boundary exponents in $\{0,1\}$ as well and an exponent is $1$ if and only if the first face is projected onto the second. Therefore, the blow-down projections $\pi_\bullet$ are $b$-maps with boundary exponents given by
 \[ e_{\pi_\bullet}(G,H) = \begin{cases} 1 &\text{if $\pi_\bullet(G) \subset H$,} \\
                                                    0 &\text{else,} \end{cases} \]
for $\bullet=l,r$ and where $G$ is a boundary hypersurface of $\calPD$ and $H$ one of either $X$ or $Y$. This easily gives \cref{eq:b.boundary.exponents} and we are left with showing that $\pi_l$ and $\pi_r$ are $b$-fibrations.

Now note that the projections $\pr_l$, $\pr_r$ are not only $b$-maps but even $b$-fibrations (in fact, they are genuine fibrations), and that the composition of a $b$-fibration with the blow-down map for a blow-up of a boundary $p$-submanifold is again a $b$-fibration, see the proof of Proposition 6 in \cite{MM98} and compare Lemma 6 of \cite{GH09}. Since in the construction of $\calPD$ we have blown up boundary $p$-submanifolds only, this implies that the blow-down projections $\pi_l$ and $\pi_r$ are in fact $b$-fibrations.
\end{proof}

\begin{lemma}\label{APP.thm:x.phg} The function $\scal{\nu(z^\p)}{z-z^\p}=x$ lifts to be polyhomogeneous on $\calPD$ with respect to the index family $(1,-1,0,-1,1)$.
\end{lemma}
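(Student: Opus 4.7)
The plan is to reduce the statement to a direct application of the Pull-Back Theorem for $\pi_l$, using the exponent matrix just established in \Cref{APP.thm:b.exponents}. First I would verify that $x$ itself is polyhomogeneous on $X$, namely $x \in \calA^{(1,-1)}(X)$ with respect to the ordering $(Y,Z)$ of boundary hypersurfaces. This is straightforward in each of the three adapted coordinate regimes of \cref{SK.eq:corner.coordinates}: on the interior of $Y$, $x$ is itself a boundary defining function, giving index $1$ there; near the interior of $Z$, in polar coordinates we have $x = \rho^{-1}\vartheta_1$ with $\vartheta_1$ a smooth function on $Z$ (the first component of the unit vector $\vartheta$), giving index $-1$ at $Z$; and near the corner $\Gamma$, the identity $x = (xr)\cdot r^{-1}$ in the coordinates $(xr,r,\omega)$ makes the two expansions match consistently across the corner.

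Next I would invoke the Pull-Back Theorem, \Cref{APP.thm:pbt}, applied to the $b$-fibration $\pi_l : \calPD \too X$. Since $x$ depends only on the left factor of $X \times Y$, the lift $\pi_{\calPD}^\ast(x) = \pi_l^\ast(x)$ is polyhomogeneous on $\calPD$, and at each boundary hypersurface $G$ of $\calPD$ its index is computed as $e_{\pi_l}(G,Y)\cdot 1 + e_{\pi_l}(G,Z)\cdot (-1)$. Reading off the five rows of $e_{\pi_l}$ in \cref{eq:b.boundary.exponents} one obtains the indices $1$, $-1$, $0$, $-1$, $1$ at $\lf(Y)$, $\lf(Z)$, $\rf(\Gamma)$, $\bff$, $\df$, respectively, which is exactly the claimed index family $(1,-1,0,-1,1)$.

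I do not anticipate any real obstacle here; the proof is essentially bookkeeping. The only point requiring mild care is to accept that the expansion of $x$ at $Z$ has coefficient $\vartheta_1$ which vanishes along $\Gamma$, but this is permitted because the definition of polyhomogeneity requires smoothness, not non-vanishing, of the coefficients. As a sanity check (or as a completely self-contained alternative proof avoiding the Pull-Back Theorem) one can simply read off the exponents in the projective coordinate systems listed at the start of \Cref{APP.sec:calcs}: for instance $x = (x/s)\cdot s$ in the coordinates $(x/s,s,\eta,y)$ near $\lf(Y) \cap \df$, and $x = (xr)/r$ in the coordinates $(xr, r, r^\p/r, \omega, \omega^\p)$ near $\rf \cap \lf(Y) \cap \bff$, giving orders $1,0,-1$ at the respective bdfs.
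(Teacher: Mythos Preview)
Your proposal is correct and follows essentially the same route as the paper: first verify $x \in \calA^{(1,-1)}(X)$ by inspecting it in the three adapted coordinate systems, then apply the Pull-Back Theorem with the exponent matrix of $\pi_l$ from \Cref{APP.thm:b.exponents} to obtain the index family $(1,-1,0,-1,1)$ on $\calPD$. The only cosmetic slip is that your ``$\pi_{\calPD}^\ast(x)$'' should read $\beta_{\calPD}^\ast(x)$; otherwise your argument matches the paper's almost line for line, with your coordinate check at the corner and the optional direct verification in projective coordinates being welcome extra care.
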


\begin{proof} Please observe that, in local coordinates near the interior of $Y \subset X$, the interior of $Z$ and near the corner $\Gamma$, respectively, we have
 \[ x = x^1 = \rho^{-1}\sqrt{1-\rho^2|y|^2} = (xr)^1 r^{-1} \;. \]
Consequently, $x$ is polyhomogeneous on $X$ with index family $(1,-1)$ for $\big(Y,Z\big)$. The Pull-Back Theorem in conjunction with the exponent matrix \cref{eq:b.boundary.exponents} of $\pi_l$ then shows that $\beta_\calPD^*(x)$ is polyhomogeneous with index family $(1,-1,0,-1,1)$ on $\calPD$.
\end{proof}

\begin{lemma}\label{APP.thm:abs.phg} For any $\lambda \in \R$, the pull-back of the functions $|z-z^\p|^\lambda$ to $\calPD$ is polyhomogeneous with index family $(0,-\lambda,-\lambda,-\lambda,\lambda)$.
\end{lemma}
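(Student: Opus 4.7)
The approach is to verify polyhomogeneity of $\beta_{\calPD}^\ast\bigl(|z-z'|^\lambda\bigr)$ chart by chart, working in the projective adapted coordinates on $\calPD$ introduced at the start of this appendix, and to read off the exponent at each of the five boundary hypersurfaces. In the interior of $\calPD$, and in the interior of $\lf(Y)$ away from the diagonal face, the function $|z-z'|$ is smooth and nonvanishing, so nothing needs to be checked there beyond recording exponent $0$ at $\lf(Y)$.

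At $\df$ and its intersection with $\lf(Y)$, I would use the coordinates $(x/s, s, \eta, y)$ with $s = |y-y'|$. Here
$$|z-z'|^2 = x^2 + |y-y'|^2 = s^2\bigl(1 + (x/s)^2\bigr)\,,$$
so $|z-z'|^\lambda = s^\lambda \bigl(1 + (x/s)^2\bigr)^{\lambda/2}$ is $s^\lambda$ times a smooth nonvanishing function. Since $s$ is a bdf for $\df$ and $x/s$ for $\lf(Y)$, this yields exponent $\lambda$ at $\df$ and exponent $0$ at $\lf(Y)$, matching the claim.

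For the faces at infinity, I would work in the chart $(xr, r, r'/r, \omega, \omega')$ covering a neighbourhood of the triple corner $\rf \cap \bff \cap \lf(Y)$. Writing $s = r'/r$ and substituting $y = \omega/r$, $y' = \omega'/r'$ gives
$$|z-z'|^2 = (r')^{-2}\bigl[(xr)^2 s^2 + |s\omega - \omega'|^2\bigr]\,,$$
and the bracket takes the value $|\omega'|^2 = 1$ at $(xr, s) = (0, 0)$, so it is smooth and nonvanishing on a full neighbourhood of the corner. Consequently $|z-z'|^\lambda = r^{-\lambda}(r'/r)^{-\lambda}$ times a smooth nonvanishing factor, giving exponents $-\lambda$ at $\bff$ (bdf $r$), $-\lambda$ at $\rf$ (bdf $r'/r$), and $0$ at $\lf(Y)$. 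The companion chart $(xr', r', r/r', \omega, \omega')$ at $\lf(Z) \cap \bff \cap \lf(Y)$ — equivalently, the symmetry $z \leftrightarrow z'$ of $|z-z'|$ paired with the symmetric roles of $\lf(Z)$ and $\rf$ — yields exponent $-\lambda$ at $\lf(Z)$. A direct check near the interior of $\lf(Z)$ in coordinates $(\rho, \vartheta, y')$, with $|z-z'|^2 = \rho^{-2}\bigl[1 - 2\rho\langle \vartheta_y, y'\rangle + \rho^2 |y'|^2\bigr]$, confirms the same exponent.

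Assembling these local expressions covers a neighbourhood of every boundary hypersurface of $\calPD$ and produces the index family $(0, -\lambda, -\lambda, -\lambda, \lambda)$ in the ordering $(\lf(Y), \lf(Z), \rf, \bff, \df)$. The only thing requiring care is the consistency of the polyhomogeneous expansions on the overlaps of the projective charts meeting at triple corners, but in every formula above the non-trivial factor is manifestly smooth and nonvanishing, so consistency is automatic and no serious obstacle is expected.
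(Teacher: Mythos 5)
Your proof is correct in substance but takes a genuinely different route from the paper's. The paper proves this lemma abstractly: it first observes that $|z-z'|^\lambda$ is polyhomogeneous with index family $(-\lambda,-\lambda,-\lambda,\lambda)$ on the $b$-double space of $\overline{\R^n}$ with resolved diagonal (at the faces $\lf,\rf,\bff,\df$ of that larger space), notes that $\calPD$ embeds into this space as a $b$-map with $|z-z'|^\lambda$ smooth and non-vanishing across the extra faces created by the embedding, and then reads off the index family on $\calPD$ from the boundary exponents of the embedding via the Pull-Back Theorem. You instead perform direct coordinate computations chart by chart, factoring out boundary defining functions explicitly. What the paper's route buys is economy: once the index family on the ambient double space is known, the exponents on $\calPD$ fall out from a single application of the Pull-Back Theorem, with no need to cover $\calPD$ by charts. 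What your route buys is self-containedness and an explicit picture of the smooth, non-vanishing factor in each chart, which is in fact the kind of information used later in \Cref{thm:smooth.x} and \Cref{thm:logs.condition}. Your computations that I checked — $|z-z'|^2 = s^2\big(1+(x/s)^2\big)$ near $\lf(Y)\cap\df$, and $|z-z'|^2 = (r')^{-2}\big[(xr)^2(r'/r)^2 + |(r'/r)\omega - \omega'|^2\big]$ near $\rf\cap\bff\cap\lf(Y)$ — are correct.

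There is one gap you gesture at but do not close: your charts cover neighbourhoods of $\lf(Y)\cap\df$ (away from $\bff$) and of the triple corners $\rf\cap\bff\cap\lf(Y)$ and $\lf(Z)\cap\bff\cap\lf(Y)$, but the corner $\df\cap\bff\cap\lf(Y)$ is not covered by either. The paper makes essential use of that corner in \Cref{thm:smooth.x.mod}, so it is non-empty. You would need one more chart, for instance the one used there, $(\tfrac{xr}{s}, r, s, \tfrac{t}{s}, \tfrac{\kappa}{s}, \omega)$ with $t = 1 - r'/r$ and $\kappa = \omega' - \omega$, in which the same factorisation goes through. With that chart added, and the interiors of $\rf$ and $\bff$ handled by the obvious simplifications of your triple-corner formula, the covering is complete and the argument is sound.
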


\begin{proof} In the $b$-double space of $\overline{\R^n}$ yet away from the diagonal, the function $|z-z^\p|^\lambda$ lifts to be polyhomogeneous with index family $(-\lambda,-\lambda,-\lambda)$ at the faces $(\lf,\rf,\bff)$. If we then blow up the diagonal inside the $b$-double space, it is clear that $|z-z^\p|^\lambda$ has index set $\lambda$ at this newly created face. Now the embedding $\calPD \hookrightarrow M$ of $\calPD$ into the $b$-double space with resolved diagonal is a $b$-map and $|z-z^\p|^\lambda$ is clearly smooth and non-vanishing across the lifts of $\{z=0\}$ and $\{z^\p=0\}$. Determining the boundary exponents of the embedding and using the Pull-Back Theorem, we see that $|z-z^\p|^\lambda$ lifts to be polyhomogeneous with index family $(0, -\lambda, -\lambda, -\lambda, \lambda)$ at $(\lf(Y),\lf(Z),\rf,\bff,\df)$ in $\calPD$.
\end{proof}

\begin{lemma}\label{APP.thm:mod.phg} For any $m \in \N_0$, the functions $\psi(z^\p) \tfrac{|z|^m}{|z^\p|^{m+n-2}} C^{\frac{n-2}{2}}_m(\Theta)$ and $\psi(z^\p) x \tfrac{|z|^m}{|z^\p|^{m+n}} C^{\frac{n}{2}}_m(\Theta)$ lift to be polyhomogeneous on $\calPD$. In particular,
 \begin{align*}
  \psi(z^\p) \tfrac{|z|^m}{|z^\p|^{m+n-2}} C^{\frac{n-2}{2}}_m(\Theta)
   &\in \calA^{(0,-m,m+n-2,n-2,0)}\big(\calPD\big) \;, \\
  \psi(z^\p) x \tfrac{|z|^m}{|z^\p|^{m+n}} C^{\frac{n}{2}}_m(\Theta)
   &\in \calA^{(1,-1-m,m+n,n-1,1)}\big(\calPD\big) \;.
 \end{align*}
\end{lemma}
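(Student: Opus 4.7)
The strategy is to expand the Gegenbauer polynomial in monomials in $\Theta = \langle z, z^\p\rangle/(|z||z^\p|)$ and reduce each summand to a product of factors whose polyhomogeneous lifts to $\calPD$ follow from the Pull-Back Theorem together with \Cref{APP.thm:x.phg,APP.thm:abs.phg,APP.thm:b.exponents}. Writing $C^{(n-2)/2}_m(\Theta) = \sum_k a_k \Theta^{m-2k}$ and clearing powers of $|z|$ and $|z^\p|$ in the denominator gives
\[
 \tfrac{|z|^m}{|z^\p|^{m+n-2}} C^{(n-2)/2}_m(\Theta) = \sum_{k} a_k \, |z|^{2k} \, \langle z, z^\p\rangle^{m-2k} \, |z^\p|^{-(2m-2k+n-2)},
\]
in which each summand factors as a product of $|z|^{2k} = (x^2+|y|^2)^k$ (a polynomial on $X$), $\langle z, z^\p\rangle^{m-2k} = \langle y, y^\p\rangle^{m-2k}$ (a polynomial on $X \times Y$ depending only on $(y, y^\p)$), and $\psi(z^\p)|z^\p|^{-(2m-2k+n-2)}$ (smooth on $Y$ and polyhomogeneous at $\Gamma$). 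An analogous decomposition with an additional outer factor of $x$ and with $C^{n/2}_m$ and denominator $|z^\p|^{m+n}$ applies to the second function.

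The index families of the three building blocks on $\calPD$ I would compute by pullback. Direct inspection in the coordinates \cref{SK.eq:corner.coordinates} gives $|z|^{2k} \in \calA^{(0,-2k)}(X)$, which by $\pi_l$ and \Cref{APP.thm:b.exponents} lifts to $\pi_l^*|z|^{2k} \in \calA^{(0,-2k,0,-2k,0)}(\calPD)$. On $X \times Y$ the function $\langle y, y^\p\rangle$ is smooth across $Y \times Y$, has order $-1$ at $Z \times Y$ and at $X \times \Gamma$, and — by joint homogeneity in $(y, y^\p)$ — acquires order $-2$ at $\bff$ after blowing up $Z \times \Gamma$; the further blow-up of the diagonal leaves it smooth there, so $\beta_\calPD^*\langle y, y^\p\rangle^{m-2k} \in \calA^{(0,-(m-2k),-(m-2k),-2(m-2k),0)}(\calPD)$. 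Finally $\psi(z^\p)|z^\p|^{-s} \in \calA^s(Y)$ pulls back via $\pi_r$ to $\calA^{(0,0,s,s,0)}(\calPD)$. Adding index families face-by-face, each summand with $s = 2m-2k+n-2$ contributes
\[
 (0,-2k,0,-2k,0) + (0,-(m-2k),-(m-2k),-2(m-2k),0) + (0,0,2m-2k+n-2,2m-2k+n-2,0) = (0,-m,m+n-2,n-2,0),
\]
independently of $k$, so summation over $k$ preserves the exponents and yields the first index family in the proposition.

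For the second function, the extra factor of $x$ contributes the index family $(1,-1,0,-1,1)$ of \Cref{APP.thm:x.phg}, while replacing $|z^\p|^{-(m+n-2)}$ with $|z^\p|^{-(m+n)}$ adds $(0,0,2,2,0)$ through $\pi_r$; combined with the first index family this gives $(1,-m-1,m+n,n-1,1)$, as claimed. The main technical point is confirming that the exponents at $\bff$ really are $n-2$ and $n-1$ rather than merely lower bounds; this is forced by the joint homogeneity of each summand in $(z, z^\p)$ — degrees $m$ and $-(m+n-2)$, respectively — which pins down the exponent at $\bff$ uniformly in $k$, while the summation reproduces the nonzero product $|z|^m C^{(n-2)/2}_m(\Theta)/|z^\p|^{m+n-2}$ itself, ruling out accidental cancellation.
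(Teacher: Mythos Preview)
Your argument is correct, but it takes a different route from the paper's. The paper does not expand the Gegenbauer polynomial at all: instead it observes directly that $\Theta = \langle z,z^\p\rangle/(|z||z^\p|)$ lifts to $\calPD$ with index set $0$ at every face (via the coordinate identity $\Theta = \langle\omega,\omega^\p\rangle(1+(xr)^2)^{-1/2}$), so that $C_m^{\lambda}(\Theta)$, being a polynomial in $\Theta$, contributes nothing to the index family. All the nontrivial exponents then come from the single factor $|z|^m/|z^\p|^{m+\lambda}$, whose index family on $X \times Y$ is read off in one line and pulled back via $e_{\beta_{\calPD}}$. Your monomial decomposition reaches the same endpoint by a longer bookkeeping computation---tracking three separate building blocks $|z|^{2k}$, $\langle y,y^\p\rangle^{m-2k}$, $\psi(z^\p)|z^\p|^{-s}$ and checking that their index families add up to the same $(0,-m,m+n-2,n-2,0)$ independently of $k$. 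The paper's approach is slicker and makes clearer why the answer does not depend on the internal structure of $C_m^\lambda$; yours is more hands-on and has the minor advantage of never needing to analyse $\Theta$ near the corner $\bff \cap \df$. Your final paragraph about the $\bff$ exponent being exactly $n-2$ rather than ``merely a lower bound'' is unnecessary: the lemma only asserts membership in the $\calA$-space, for which the inequality suffices.
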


\begin{proof} Let us consider the term $A = \psi(z^\p) |z|^m |z^\p|^{2-m-n} C^{\frac{n-2}{2}}_m(\Theta)$ first. Following \Cref{thm:hom.harm.decomp}, $A$ is harmonic and hence smooth in $z$. Since $\psi(z^\p)$ is supported away from $z^\p = 0$ and because $C^{\frac{n-2}{2}}_m$ is a polynomial of $\Theta = \tfrac{<z,z^\p>}{|z| |z^\p|}$, it is smooth in $z^\p$ as well. Moreover, using the calculation
 \[ \Theta = \tfrac{<z,z^\p>}{|z||z^\p|} = \scal{\rho}{\omega^\p} = \scal{\omega}{\omega^\p} \big(1 + (xr)^2\big)^{-\tfrac{1}{2}} \]
we see that $\Theta$ lifts to be polyhomogeneous with index sets $0$ at every face of $\calPD$. As $C^{\frac{n-2}{2}}_m(\Theta)$ is a polynomial of $\Theta$, the only non-zero index sets for $A$ on $\calPD$ will arise from the factors $\tfrac{|z|^{m}}{|z^\p|^{m+n-2}}$. Using local coordinates in $X \times Y$ near $Z \times Y$, we get
 \begin{equation}\label{eq:mod.phg.1}\begin{aligned}
  \frac{|z|^m}{|z^\p|^{m+n-2}}
   &= \rho^{-m} |z^\p|^{2-m-n} = \rho^{-m} (r^\p)^{m+n-2} \\
   &= r^{-m} |z^\p|^{2-m-n} \big(1 + (xr)^2\big)^{-\tfrac{1}{2}}
      = r^{-m} (r^\p)^{m+n-2} \big(1 + (xr)^2\big)^{-\tfrac{1}{2}} \;,
 \end{aligned}\end{equation}
and this shows that $\tfrac{|z|^m}{|z^\p|^{m+n-2}}$ is polyhomogeneous on $X \times Y$ with index sets $(0,-m,m+n-2)$ at $(Y \times Y, Z \times Y, X \times \Gamma)$. The Pull-Back Theorem then gives the first of our claims.

As for the term $B=\psi(z^\p) \psi(z^\p) x |z|^m|z^\p|^{-m-n} C^{\frac{n}{2}}_m(\Theta)$, combining the same arguments as before with \Cref{APP.thm:x.phg}, we see that $B$ lifts to be polyhomogeneous on $\calPD$ and that the only non-zero index sets arise from the factors $x \frac{|z|^{m}}{|z^\p|^{m+n}}$. Using \cref{eq:mod.phg.1}, \Cref{APP.thm:x.phg} and the Pull-Back Theorem, we arrive at the second claim.
\end{proof}

\begin{lemma}\label{APP.thm:dens.phg} We have $\mu \in \calA^{(0,1-n)}(X;\bOmega)$, $|dz^\p| \in \calA^{1-n}(Y;\bOmega)$ and
 \[ \pi_l^\ast(\mu) \, \pi_r^\ast(|dz^\p|) \in \calA^{(0,1-n,1-n,2-2n,n-1)}\big(\calPD;\bOmega\big) \;. \]
\end{lemma}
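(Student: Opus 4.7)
The plan is to verify polyhomogeneity of each factor in local coordinates and then combine the results on $\calPD$, with the main subtlety being the computation of the index at $\df$.

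For $\mu = x^{-1}|dz|$ on $X$, I would use the three adapted charts of \cref{SK.eq:corner.coordinates}. Near the interior of $Y$, $\mu = \big|\tfrac{dx}{x}\,dy\big|$ is a smooth nonvanishing $b$-density, giving index $0$ at $Y$. Near the interior of $Z$, writing $z = \rho^{-1}\vartheta$ with $\vartheta \in \Sph^{n-1}_+$ and using $x = \vartheta_0/\rho$, polar coordinates give $|dz| = \rho^{-n-1}|d\rho\,d\vartheta|$, hence $\mu = \vartheta_0^{-1}\rho^{1-n}\big|\tfrac{d\rho}{\rho}\,d\vartheta\big|$, which has index $1-n$ at $Z$; the analogous computation in $(xr,r,\omega)$-coordinates near $\Gamma$ is consistent. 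Similarly, polar coordinates $(r^\p,\omega^\p)$ near $\Gamma \subset Y$ yield $|dy^\p| = (r^\p)^{1-n}\big|\tfrac{dr^\p}{r^\p}\,d\omega^\p\big|$, giving the index at $\Gamma$.

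Viewing $\pi_l^\ast\mu \cdot \pi_r^\ast|dy^\p| = \beta_\calPD^\ast(\mu \boxtimes |dy^\p|)$, the product $b$-density on $X \times Y$ has indices $(0,\,1-n,\,1-n)$ at $(Y \times Y,\,Z \times Y,\,X \times \Gamma)$. Since $\beta_\calPD = \beta_D \circ \beta_\calP$ is a diffeomorphism away from the blown-up loci, the lifts $\lf(Y)$, $\lf(Z)$, and $\rf$ inherit the indices $0$, $1-n$, and $1-n$. For the front face $\bff$ of $\beta_\calP$ (blowing up $Z \times \Gamma$), in projective coordinates $(r^\p,\sigma,\vartheta,\omega^\p)$ with $\sigma = \rho/r^\p$ one has the identity $\big|\tfrac{d\rho}{\rho}\tfrac{dr^\p}{r^\p}\big| = \big|\tfrac{d\sigma}{\sigma}\tfrac{dr^\p}{r^\p}\big|$ and $\rho^{1-n} = (\sigma r^\p)^{1-n}$, producing total index $(1-n)+(1-n) = 2-2n$ at $\bff$.

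The hard part is the index at $\df$: a naive application of the Pull-Back Theorem to the exponent matrices in \Cref{APP.thm:b.exponents} does not see the Jacobian contribution of the codim-$n$ diagonal blow-up. I would compute this directly in the projective coordinates $(\xi,s,\eta,y) = (x/s,\,|y-y^\p|,\,(y-y^\p)/|y-y^\p|,\,y)$ introduced at the start of this appendix. The polar substitution $(x,\,y-y^\p) = (\xi s,\,s\eta)$ in $\R \times \R^{n-1}$ has Jacobian $s^{n-1}$, so $|dx\,dy\,dy^\p| = s^{n-1}|d\xi\,ds\,d\sigma_\eta\,dy|$, where $d\sigma_\eta$ is the surface measure on $\Sph^{n-2}$. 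Since $x = \xi s$, this yields
\[ \mu \boxtimes |dy^\p| = (\xi s)^{-1} s^{n-1}|d\xi\,ds\,d\sigma_\eta\,dy| = s^{n-1}\big|\tfrac{d\xi}{\xi}\tfrac{ds}{s}\,d\sigma_\eta\,dy\big| \;, \]
exhibiting index $0$ at $\lf(Y)$ and $n-1$ at $\df$. Analogous checks in projective coordinates near the corners of $\df$ with $\bff$ and $\rf$ confirm that the earlier indices at $\lf(Z)$, $\rf$, and $\bff$ are unchanged by the diagonal blow-up. Collecting the five contributions yields the claimed index family $(0,\,1-n,\,1-n,\,2-2n,\,n-1)$.
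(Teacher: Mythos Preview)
Your proof is correct and follows essentially the same route as the paper: both compute $\mu$ and $|dz^\p|$ in the adapted charts of \cref{SK.eq:corner.coordinates}, pass to projective coordinates on $\calPD$ near the various corners, and obtain the $\df$ index by the explicit calculation $\mu\,|dz^\p| = s^{n-1}\big|\tfrac{d(x/s)}{x/s}\tfrac{ds}{s}\,d\eta\,dy\big|$ in the coordinates $(x/s,s,\eta,y)$. One small inaccuracy: $\df$ does not actually meet $\rf$ (the diagonal only reaches $X\times\Gamma$ inside the blown-up corner $Z\times\Gamma$, so after the first blow-up it meets $\bff$ but not $\rf$), so your ``analogous check near $\df\cap\rf$'' is vacuous---but this does no harm.
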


\begin{proof} It is clear that $|dz|$ and $|dz^\p|$ are smooth densities on the interior of $X$ respectively $Y$. Near the boundaries, direct calculations give
 \begin{equation}\label{eq:dens.factors.bdy}\begin{aligned}
  \big|dz\big| &= x \big| \bdens{x} dy \big| = \rho^{-n} \big|\bdens{\rho}d\vartheta\big| = (xr) r^{-n} \big|\bdens{x}\bdens{r}d\omega\big| \\
  \big|dz^\p\big| &= (r^\p)^{1-n}\big|\bdens{r^\p}d\omega^\p\big|
 \end{aligned}\end{equation}
and this shows that $|dz| \in \calA^{(1,-n)}\big(X;\bOmega\big)$ and $|dz^\p| \in \calA^{1-n}\big(Y;\bOmega\big)$. Multiplication with $x^{-1}$ then shows that $\mu \in \calA^{(0,1-n)}\big(X;\bOmega\big)$. Using respective coordinates near codimension 3 corners not involving $\df$ in $\calPD$, we see that, for instance
 \begin{equation}\label{eq:dens.total.bdy}
  \mu \big|dz^\p\big|
    = \big(\tfrac{r^\p}{r}\big)^{1-n} r^{2-2n}
        \big|\bdens{x}\bdens{r}\bfracdens{r^\p}{r}d\omega d\omega^\p\big|
 \end{equation}
and we obtain index families $\big(0,1-n,1-n,2-2n,\bullet\big)$. Then, using coordinates near the interior of $\lf(Y) \cap \df$, e.g., $s = |y-y^\p|$, $\eta = s^{-1}(y-y^\p)$ and $\tfrac{x}{s}$, we get
 \[ \mu \big|dz^\p\big| = s^{n-1} \big|\bfracdens{x}{s} \tfrac{ds}{s} d\eta dy\big| \;. \]
Similar representations hold in coordinate systems valid near the other corners, in particular near $\bff \cap \df$, and we conclude that $\mu|dz^\p|$ has index family $(0,1-n,1-n,2-2n,n-1)$.
\end{proof}

\subsection{Regarding Logarithmic Terms}\label{APP.sec:logs}

We start by giving a proof of the central \Cref{thm:log.lemma}. This result will allow us to show that the layer potentials are smooth up to $Y \subset X$ and to obtain conditions for the appearance of logarithms in their expansions at $Z$.

\begin{proof}[of \Cref{thm:log.lemma}] As $F$ is a composition of the projection off of $\Sph^m$, which is a genuine fibration, with the map $(a,b) \mapsto ab$, which is shown to be a $b$-fibration in \cite{gohar}, for instance, we see that $F$ is a fibration over the interior of any face of its range. Moreover, any boundary hypersurface $G$ of $[0,1)^2 \times N \times \Sph^m$ is of either two forms, containing a factor $\Gamma_x \in \calM_1([0,1)^2)$ or $\Gamma_N \in \calM_1(N)$. As we have $F(\Gamma_x) \subset \{0\}\times N$ and $F(\Gamma_N) \subset [0,1) \times \Gamma_N$, we see that the codimension of boundary hypersurfaces is not increased, which shows that $F$ is a $b$-fibration. Moreover, it shows that $\nul_{e_F} = \emptyset$ which is to say that the integrability condition for the Push-Forward Theorem, \cref{APP.eq:pft.int.cond}, is vacious and that $F_* u$ is well-defined and polyhomogeneous at $\{0\} \times N$ with index family $E_1 \bcup E_2$.

Now, as $u$ is compactly supported, we may write its push-forward as
 \begin{equation}
  F_*u(t,y) = \left(
      \int_0^\infty \int_{\Sph^m} \widetilde{u}(x_1,\tfrac{t}{x_1},y,\theta) d\theta \bdens{x_1} \right)
     \bdens{y}\bdens{t} \;,
 \end{equation}
for a suitable function $\widetilde{u}$, compare \cref{eq:pft.sal.ex}. As the index sets $E_1$, $E_2$ are integer, we have
 \[ \widetilde{u} \sim \sum_{i,j} x_1^i x_2^j \widetilde{u}_{ij}(y,\theta) \]
near $\{0\}^2 \times N \times \Sph^m$ for functions $\widetilde{u}_{ij} \in \calA^{\mathcal{E}_0}(N \times \Sph^m)$, where $\mathcal{E}_0 = \mathcal{E}_{\{0\}^2\times N \times \Sph^m}$. Now, by assumption we have $u(x_1,x_2,y,\theta) = -u(-x_1,-x_2,y,-\theta)$ which implies
 \[ x_1^i x_2^j \widetilde{u}_{ij}(y,\theta) = (-1)^{i+j+1}x_1^i x_2^j \widetilde{u}_{ij}(y,-\theta) \]
and in particular $\widetilde{u}_{ii}(y,\theta) = - \widetilde{u}_{ii}(y,-\theta)$. Since we can change the order of integration and asymptotic summation because of the integral's uniform convergence (the respective coefficients of $\widetilde{u}$ are compactly supported and smooth), we see that the coefficients $b_i$ from \Cref{thm:logs.exa} are given by
 \[ b_i = \int_{\Sph^m} \widetilde{u}_{ii}(y,\theta) = \int_{\Sph_+^m} \widetilde{u}_{ii}(y,\theta) + \widetilde{u}_{ii}(y,-\theta) = 0 \;. \]
Thus, $F_*u$ is in fact polyhomogeneous with index set $E_1 \cup E_2$ at $\{0\} \times N$.
\end{proof}

\paragraph{Smoothness up to $\mathbf{Y}$} \Cref{thm:logs.exa,thm:log.lemma} are local models for the push-forward along $\pi_l$ and as polyhomogeneity is a local property, we can apply these by using a partition of unity subordinate to a suitable cover. For ease of reference, we will use the partition of unity only implicitly.

To start with, we apply \Cref{thm:log.lemma} to the unmodified layer densities and show that the corresponding layer potential operators give functions that are smooth up to the boundary hypersurface $Y \subset X$. Observe that in order to apply \Cref{thm:log.lemma}, we need not only a $b$-fibration of a specific form but also a $b$-density that is polyhomogeneous with integer index sets at the faces in question.

\begin{prop}\label{thm:smooth.x} Given any index set $E$ and $f \in \calA^E(Y)$, the unmodified layer potentials of $f$, $\Op{\single}f$ and $\Op{\double}f$, are smooth up to the face $Y \subset X$.
\end{prop}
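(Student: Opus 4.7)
My plan is to combine the Push-Forward Theorem with \Cref{thm:log.lemma}, whose raison d'\^etre in this paper is precisely to kill the logarithmic terms that the general push-forward would \emph{a priori} allow at the face $Y$.

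From the exponent matrix \eqref{eq:b.boundary.exponents} for $\pi_l$, the only faces of $\calPD$ mapping into $Y \subset X$ are $\lf(Y)$ and $\df$, each with exponent $1$. Combining this with the indices from \Cref{thm:kernels.phg} and the observation that $\pi_r^* f$ is smooth at both faces (the corresponding entries of $e_{\pi_r}$ vanish), the Push-Forward Theorem yields index $0 \bcup 1$ at $Y$ for both $\Op{\single}f$ and $\Op{\double}f$. This extended union allows terms $x^j \log x$ for $j \ge 1$, and the whole content of the proposition is that such terms do \emph{not} appear.

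To rule them out I would work locally via a partition of unity subordinate to a cover of $\lf(Y) \cup \df \subset \calPD$. Near the interior of $\lf(Y) \cap \df$, the projective coordinates $(x_1, x_2, y, \eta) = \bigl(\tfrac{x}{s}, s, y, \tfrac{y - y'}{s}\bigr)$ with $s = |y - y'|$ and $\eta \in \Sph^{n-2}$, as introduced at the start of \Cref{APP.sec:calcs}, realise $\pi_l$ as the model $b$-fibration of \Cref{thm:log.lemma}, namely $(x_1, x_2, y, \eta) \mapsto (x_1 x_2, y)$, with the manifold-with-corners $N$ taken to be a chart in $Y$. A direct Jacobian computation $|dx\,dy\,dy'| = x_2^{n-1}\,|dx_1\,dx_2\,dy\,d\eta|$ brings the layer densities into the form
\begin{align*}
  \bsingle &= c_n \, x_2 \, (x_1^2 + 1)^{\frac{2-n}{2}} \, \big|\bdens{x_1}\,\bdens{x_2}\,dy\,d\eta\big|, \\
  \bdouble &= c'_n \, x_1 \, (x_1^2 + 1)^{-\frac{n}{2}}   \, \big|\bdens{x_1}\,\bdens{x_2}\,dy\,d\eta\big|.
\end{align*}
Multiplied by $\pi_r^* f = f(y - x_2 \eta)$, the resulting coefficient function is odd under $(x_1, x_2, \eta) \mapsto (-x_1, -x_2, -\eta)$: the factor $x_2$ (single) or $x_1$ (double) flips sign, $f(y - x_2\eta) = f\bigl(y - (-x_2)(-\eta)\bigr)$ is invariant, and $(x_1^2 + 1)^{\bullet}$ is even in $x_1$. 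Hence the antisymmetry hypothesis of \Cref{thm:log.lemma} is satisfied with $E_1, E_2 \in \{0, 1\}$, and the push-forward is polyhomogeneous at $Y$ with respect to the \emph{plain} union $0 \cup 1 = 0$; equivalently, it is smooth up to $Y$.

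It remains to handle the corner $\Gamma \subset Y$. In analogous projective coordinates adapted to the further intersections of $\lf(Y) \cap \df$ with $\bff$ (and, where relevant, with $\rf(\Gamma)$), $\pi_l$ still takes the product form $(x_1, x_2, \cdot) \mapsto (x_1 x_2, \cdot)$, now with the remaining variables lying in a manifold with corners $N$ that absorbs the boundary structure of $X$ at $\Gamma$. The antisymmetry computation is purely algebraic in $x_1, x_2, \eta$ and carries over verbatim; the even/odd factorisation of $\bsingle$ and $\bdouble$ is intrinsic to the Euclidean fundamental solution. The main technical nuisance I anticipate is thus the careful bookkeeping of these corner charts in $\calPD$ and checking that the local product model of \Cref{thm:log.lemma} applies in each; once this is in place, the partition of unity assembles the local statements into smoothness of the layer potentials up to all of $Y$.
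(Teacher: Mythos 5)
Your proposal is correct and follows essentially the same route as the paper's own proof: obtain the index set $0\bcup 1$ at $Y$ from the Push-Forward Theorem, then pass to projective coordinates near $\lf(Y)\cap\df$, exhibit the explicit local forms of $\bsingle$ and $\bdouble$, and invoke the antisymmetry criterion of \Cref{thm:log.lemma} (with the corner at $\bff$ treated analogously via a partition of unity) to replace $0\bcup 1$ by $0\cup 1=0$. The only differences are cosmetic, e.g.\ your choice $s=|y-y^\p|$, $\eta\in\Sph^{n-2}$ versus the paper's $s=|z-z^\p|$, $\eta\in\Sph^{n-1}$, and your explicit Jacobian bookkeeping instead of absorbing smooth non-vanishing factors into $\widetilde{\mu}$.
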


\begin{proof} We need to consider integrals over fibres of $\pi_l$ as the base point approaches $Y \subset X$. Looking at the definition of push-forwards of index sets, \cref{APP.eq:pft.index}, we see that the only faces involved are $\lf(Y)$ and $\df$ and that the only obstruction to smoothness is the possible appearance of logarithmic terms, arising from the fibres of $\pi_{l}$ being pushed into the corner $\lf(Y) \cap \df$. First note that the Pull-Back Theorem shows that $\pi_{r}^\ast (f)$ is polyhomogeneous with index families $0$ at both $\lf(Y)$ and $\df$ and hence polyhomogeneous with respect to integer index sets for these faces for any choice of index set $E$. Then, using \cref{eq:kernels.phg}, the Push-Forward Theorem and \cref{eq:fct.dens.ident.detail.1}, we see that $\Op{\single}f$ and $\Op{\double}f$ are polyhomogeneous at $Y \subset X$ with index sets $0 \bcup 1$. We will now use \Cref{thm:log.lemma} to substitute a normal union for the extended union.

We start by considering fibres of $\pi_{l}$ over points in $X$ approaching the interior of $Y$. Near this corner, let us choose suitable local coordinates
 \begin{equation}
  \Big( \tfrac{x}{s}, s, \eta, y \Big)
   = \Big( \frac{x}{|z-z^\p|}, |z-z^\p|, \frac{z-z^\p}{|z-z^\p|}, y \Big) \;,
 \end{equation}
noting that $\tfrac{x}{s}$ is a boundary defining function for $\lf(Y)$, $(s, \eta)$ are polar coordinates around $\df$ and $y$ are coordinates in $\lf(Y) \cap \df$. Then,
 \begin{equation}
  \pi_{l}\big( \tfrac{x}{s}, s, \eta, y \big) = \big( \tfrac{x}{s}s,y\big)
 \end{equation}
which is to say that the push-forward amounts to integration over the unit normal sphere bundle to $\df$, $\eta \in \Sph^{n-1}$, and push-forward along the map $(\tfrac{x}{s},s) \mapstoo \tfrac{x}{s}s$. In these coordinates and on a fibre over a point near $\mathring{Y}$ (and implicitly using cut-off functions), the $b$-densities under consideration are given by
 \begin{equation}\label{eq:smoothness.layer.pots.local}\begin{aligned}
  \bsingle \, \pi_{r}^\ast (f) &= s^1 f(y-s\eta) \widetilde{\mu} \;, \\
  \bdouble \, \pi_{r}^\ast (f) &= \big(\tfrac{x}{s}\big) f(y-s\eta) \widetilde{\mu} \;,
 \end{aligned}\end{equation}
where $\widetilde{\mu}$ is a smooth and non-vanishing $b$-density. But then, we are in the situation of \Cref{thm:log.lemma}, with $N = \lf(Y) \cap \df$, $m = n-1$ and for $F = \pi_{l}$. As \cref{eq:smoothness.layer.pots.local} gives \cref{eq:log.cond.1.change}, both lines in \cref{eq:smoothness.layer.pots.local} satisfy the prerequisites of \Cref{thm:log.lemma}, on fibres over points near the interior of $Y$.

Analogously, we may choose adapted local coordinates near the corner $\lf(Y) \cap \df \cap \bff$ (cf.~the proof of \Cref{thm:smooth.x.mod}) and obtain a representation similar to \cref{eq:smoothness.layer.pots.local}. Then, using \Cref{thm:log.lemma} again, we conclude that the layer potentials of $f$ (identified with functions, as usual) are polyhomogeneous at $Y$ with index sets $0 \cup 1 = 0$, i.e., smooth up to the face $Y \subset X$.
\end{proof}

The modified layer densities differ from the unmodified ones by a finite number of modification terms and hence it is sufficient to show that these modification terms give rise to functions that are smooth up to the face $Y$. We will not use \Cref{thm:log.lemma} in this case but show directly that there are no diagonal elements.

\begin{prop}\label{thm:smooth.x.mod} Given any index set $E$ and $f \in \calA^E(Y)$, the modified layer potentials of $f$, $\Op{\single_k}f$ and $\Op{\double_k}f$, are smooth up to the face $Y \subset X$.
\end{prop}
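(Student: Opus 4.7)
The plan is to apply the Push-Forward Theorem to the full modified $b$-density $\bsingle_k\,\pi_r^\ast(f)$ (and its double-layer analogue) and improve the resulting extended-union index set $0 \bcup 1$ at $Y$ (coming from \Cref{thm:kernels.phg} and the exponent matrix of $\pi_l$ in \Cref{APP.thm:b.exponents}) to $0 \cup 1 = 0$, by showing that all \emph{diagonal coefficients} in the sense of \Cref{thm:logs.exa} vanish. Since the contribution of the unmodified part to these diagonal coefficients vanishes by the symmetry argument already used in the proof of \Cref{thm:smooth.x}, it is enough to treat the finitely many modification terms individually, and for these the anti-symmetry in \Cref{thm:log.lemma} does not hold, so we have to verify the vanishing directly.

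The crucial observation is that, by \Cref{thm:hom.harm.decomp}, the $m$-th modification term of the single-layer kernel has the form $\psi(z^\p)\,|z^\p|^{2-n-m}\,P_m(z;z^\p/|z^\p|)$, where $P_m(\,\cdot\,;\xi)$ is a harmonic polynomial of degree $m$ in $z$ depending smoothly on $\xi \in \Sph^{n-1}$; the double-layer case is analogous, with $P_m$ replaced by $x \cdot Q_m$ for $Q_m$ a polynomial of degree $m$ in $z$, cf.~\Cref{APP.thm:x.phg}. In the projective coordinates $(\eta_1,s,\eta',y)$ near $\lf(Y)\cap\df$ from the proof of \Cref{thm:smooth.x} --- in which $x = \eta_1 s$ with $\eta_1$ a boundary defining function for $\lf(Y)$ and $s$ one for $\df$ --- the substitution $z = (\eta_1 s, y)$ turns each monomial $x^{\alpha_1}y^{\alpha'}$ of $P_m$ (or $Q_m$) into $\eta_1^{\alpha_1}s^{\alpha_1}y^{\alpha'}$: powers of $\eta_1$ and $s$ are automatically matched. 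Because the remaining factors $\psi(z^\p)|z^\p|^{2-n-m}f(z^\p)$ and the pulled-back density $\mu|dz^\p|$ (which by \Cref{APP.thm:dens.phg} contributes an extra $s^{n-1}$ at $\df$) do not depend on $\eta_1$, the local expansion of the modification term's $b$-density against $\pi_r^\ast(f)$ takes the form $\sum_{i,j}V_{ij}(\eta',y)\,\eta_1^i s^j$ with $V_{ij} = 0$ unless $j \ge i + n - 1$. Since $n \ge 3$, every diagonal coefficient $V_{ii}$ therefore vanishes, and \Cref{thm:logs.exa} yields the absence of logarithmic terms at $Y$ after push-forward along $\pi_l$.

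The analogous analysis applies near the triple corner $\lf(Y)\cap\bff\cap\df$, using the projective coordinates around $\Gamma$ from the beginning of \Cref{APP.sec:calcs}: the polynomial structure of $P_m$ in $z$ is preserved when $z$ is expressed in terms of the corresponding boundary defining functions $xr$ and $r$, and the excess power contributed by $\mu|dz^\p|$ at $\df$ again precludes any diagonal coefficient from appearing. The main technical obstacle of this approach is the careful bookkeeping of boundary defining functions across the various corner configurations, in particular near the triple intersection with $\bff$, where the roles of $x$ and $y$ are intertwined through the polar coordinates $(xr,r,\omega)$; once the matched-powers structure is verified uniformly there, the proposition follows immediately from \Cref{thm:logs.exa}.
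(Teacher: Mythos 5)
Your argument is correct and matches the paper's in all essentials: both separate off the finitely many modification terms, apply \Cref{thm:logs.exa} directly (rather than the anti-symmetry criterion of \Cref{thm:log.lemma}, which the modification terms do not satisfy), and show the relevant diagonal coefficients vanish because the polynomial structure of $P_m$ in $z$ forces matched exponents at $\lf(Y)$ and $\df$ while the density $\pi_l^*(\mu)\pi_r^*(|dz'|)$ contributes a strictly positive excess at $\df$ (index $n-1$, packaged in the paper's \cref{eq:smoothness.mods.local} calculation near $\bff$ via the explicit factor $t$). One small caveat: the assertion that $\psi(z')|z'|^{2-n-m}f(z')$ and $\mu|dz'|$ are $\eta_1$-independent needs the genuinely projective choice $\eta_1 = x/|y-y'|$, $s=|y-y'|$, $\sigma = (y-y')/|y-y'|$ (so that $y' = y-s\sigma$ is $\eta_1$-free) rather than the spherical coordinates $s = |z-z'|$, $\eta \in \Sph^{n-1}_+$ of \Cref{thm:smooth.x}'s proof, in which $y' = y - s\sqrt{1-\eta_1^2}\,\sigma$ does carry $\eta_1$-dependence; since the diagonal coefficients of \Cref{thm:logs.exa} are invariant under this fibre-preserving change of boundary defining functions, your conclusion is unaffected.
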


\begin{proof} As before, we need only consider those parts of fibres of $\pi_{l}$ which are close to $\lf(Y) \cap \df$. Thanks to the cut-off function $\psi$, it is in this case sufficient to consider the situation near $\partial(\lf(Y)\cap\df)$, as the type of local product coordinates we will be using there extend to local product coordinates for a neighbourhood of $\supp \psi \cap \lf(Y) \cap \df$. (This is of course due to standard coordinates in $\R^n$ being global coordinates.)

Let us choose coordinates in a neighbourhood of $\partial\big(\beta_\calP^\ast(D)\big)$, the boundary of the diagonal lifted to $\calP$, for instance $(xr, r, t, \omega, \kappa)$, where $t = 1-\tfrac{r^\p}{r}$, $\kappa = \omega^\p-\omega$. Then,
 \begin{equation}\label{eq:smoothness.mods.local} \begin{aligned}
   &\frac{|z|^m}{|z^\p|^{m+\lambda}} f(y^\p) C_m^{\frac{\lambda}{2}}(\Theta) \\
     &\qquad= r^\lambda (1-t)^{m+\lambda} \Big[ s^2 (xr)^2 + 1 \Big]^m \\
     &\qquad\quad\, \cdot\, f\big( r^{-1} \tfrac{1}{1-t} (\omega + \kappa) \big) \;\cdot\;
            C_m^{\frac{\lambda}{2}}\Big( r\big(1+(xr)\big)^{-\tfrac{1}{2}} \big(1+\scal{\omega}{\kappa}\big)\Big)
 \end{aligned}\end{equation}
where $s^2 = t^2 + |\kappa|^2$ is a defining function for the lifted diagonal $\beta_\calP^*(D) = \{ t=\kappa=0\}$, and the density factor takes the form
 \[ r^{2-2n} t (1-t)^{2-n} \left| \bdens{xr} \bdens{r} \bdens{t} d\omega d\kappa \right| \;. \]
The right hand side of \cref{eq:smoothness.mods.local} are polyhomogeneous functions, and its coefficients (as $xr \to 0$) are smooth across the lifted diagonal. Moreover, when multiplied with the $b$-density factor, these coefficients vanish to at least first order in $t$.

Now we lift things to a neighbourhood of $\lf(Y) \cap \df \cap \bff$ in $\calP_D$. We use local product coordinates $(\tfrac{xr}{s}, r, s, \tfrac{t}{s}, \tfrac{\kappa}{s},\omega)$ and assume that $u$ is a polyhomogeneous $b$-density on $\calP$, supported near $\partial\big(\beta_\calP^\ast(D)\big)$, so that
 \[ u \sim \sum_{j \ge k} \sum_{i \ge l} t (xr)^j r^i \, u_{ij} \;. \]
Then, near $\lf(Y) \cap \df \cap \bff$ we have
 \[ \beta_\calPD^*(u) \sim \sum_{j \ge k} \sum_{i \ge l} s^{j+1} \big(\tfrac{xr}{s}\big)^j \tfrac{t}{s} r^i \widetilde{u}_{ij} \;. \]
Because $\tfrac{xr}{s}$ and $s$ are boundary defining functions for $\lf(Y)$ and $\df$ respectively, this shows that there are no diagonal elements, by which we mean terms corresponding to $(\tfrac{xr}{s})^j s^j$ for $j \in \Z$. Applying this argument to the $b$-densities corresponding to the modification terms, $\big( \bsingle - \bsingle_k \big) \pi_{r}^\ast(f)$ and $\big( \bdouble - \bdouble_k \big) \pi_{r}^\ast (f)$, we see that its push-forward along $\pi_{l}$ does not give rise to any additional logarithmic terms, by \Cref{thm:logs.exa}. As there have been none in the first place and by using \Cref{thm:smooth.x}, we see that the modified layer potentials $\Op{\single_k}f$ and $\Op{\double_k}f$ are smooth up to $Y \subset X$ for any $k \in \N$ and any $f \in \calA^E(Y)$.
\end{proof}

\paragraph{Logarithmic Behaviour at $\mathbf{Z}$} We now turn to proving the propositions leading to \Cref{ex:logs}, \Cref{thm:logs.condition,thm:logs.condition.mod}.

\begin{proof}[of \Cref{thm:logs.condition}]\label{pf:exa.1} Following the same arguments as in the proofs of \Cref{thm:smooth.x,thm:smooth.x.mod}, it is sufficient to consider fibres of $\pi_{l}$ which intersect a neighbourhood of ${\lf(Z)} \cap \bff$ and in fact this intersection only. In order to be brief, we restrict our attention to a neighbourhood of $\partial ({\lf(Z)} \cap \bff) = {\lf(Z)} \cap \bff \cap \lf(Y)$, the calculations for the interior of ${\lf(Z)} \cap \bff$ are analogous. There, we can use coordinates $(r^\p, \tfrac{r}{r^\p}, xr, \omega, \omega^\p)$, where the first three are boundary defining functions: $r^\p$ for $\bff$, $\tfrac{r}{r^\p}$ for $\lf(Z)$ and $xr$ for $\lf(Y)$. Then,
 \begin{equation}
  \pi_{l}\big(r^\p, \tfrac{r}{r^\p}, xr,\omega,\omega^\p\big)
   = \big( r^\p \tfrac{r}{r^\p}, xr, \omega \big)
   = \big( r, xr, \omega \big) \;,
 \end{equation}
where we note that $r$ is a boundary defining function for $Z$ and $xr$ one for $Y$. Thus, apart from integrating over $\omega^\p \in \Sph^{n-2}$, where are in the situation of \Cref{thm:logs.exa}.

Near ${\lf(Z)} \cap \bff \cap \lf(Y)$, we have $|z| = (\tfrac{r}{r^\p})^{-1}(r^\p)^{-1} ( (xr)^2 + 1 )^{1/2}$ and $|z^\p| = (r^\p)^{-1}$. Consequently, on a sufficiently small neighbourhood, we have $(\tfrac{r}{r^\p}) < (1+(xr)^2)^{1/2}$ and equivalently $|z^\p| < |z|$. But this is to say that the \emph{other} multipole expansion
 \begin{equation}
  \big|z-z^\p\big|^{-\lambda} = \sum_{m=0}^\infty \frac{|z|^m}{|z^\p|^{m+\lambda}} C^{\frac{\lambda}{2}}_m(\Theta)
 \end{equation}
converges uniformly, where
 \[ \Theta = \tfrac{<z,z^\p>}{|z| |z^\p|} = \scal{\theta}{\omega^\p} = \big(1+(xr)^2\big)^{-\tfrac{1}{2}}\scal{\omega}{\omega^\p} \]
is independent of $r^\p$ and $\tfrac{r}{r^\p}$. The pull-backs of the single summands are of the form
 \begin{equation}
  \beta_\calPD^\ast\left(\frac{|z^\p|^m}{|z|^{m+\lambda}} C^{\frac{\lambda}{2}}_m(\Theta)\right)
   = (\tfrac{r}{r^\p})^{m+\lambda} (r^\p)^{s} \big(1+(xr)^2\big)^{-\tfrac{m+\lambda}{2}}
     C^{\frac{\lambda}{2}}_m(\Theta) \;.
 \end{equation}
Multiplying this by the terms $(r^\p)^j f_j(\omega^\p)$, $j \ge l \ge \alpha_\pm(0)$, from the data and by the density factor $\beta_\calPD^\ast\big( \pi_l^\ast(\mu) \pi_r^\ast(|dz^\p|)\big)$ yields
 \begin{equation}
  (\tfrac{r}{r^\p})^{m+\lambda+1-n} (r^\p)^{\lambda + j + 2 - 2n} f_j(\omega^\p)
    C^{\frac{\lambda}{2}}_m(\Theta) \big(1+(xr)^2\big)^{-\tfrac{m+\lambda}{2}} \widetilde{\mu} \;,
 \end{equation}
where $\widetilde{\mu}$ is a smooth and non-vanishing $b$-density. In the case of the double layer potential we need to multiply this by another factor, namely $\scal{\nu(z^\p)}{z-z^\p} = x = (\tfrac{r}{r^\p})^{-1} (r^\p)^{-1} (xr)$, for which it is important to note that the exponents of $(\tfrac{r}{r^\p})$ and $(r^\p)$ are identical. Therefore, the diagonal elements with respect to $(\tfrac{r}{r^\p})$ and $(r^\p)$ are given by choosing $m = j + 1 - n$ and are of the form
 \begin{equation}
  \Bigg\{(\tfrac{r}{r^\p})^{-1} (r^\p)^{-1} (xr) \, \cdot \Bigg\} \,
  (\tfrac{r}{r^\p})^{\lambda+j+2-2n} (r^\p)^{\lambda+j+2-2n}
  f_j(\omega^\p) C^{\frac{\lambda}{2}}_{j+1-n}(\Theta) \big(1+(xr)^2\big)^{-\tfrac{\lambda+j+1-n}{2}} \widetilde{\mu} \;,
 \end{equation}
in which the term in curly braces appears for the double layer potentials only. As $j \ge \alpha_\pm(0)$ and since $m \ge 0$, these exist for any $j \ge \max\{n-1,\alpha_\pm(0)\} = n-1$. By \Cref{thm:logs.exa}, the corresponding coefficient in the pushed-forward density vanishes identically if and only if
 \begin{equation}
  \int_{\Sph^{n-2}} f_j(\omega^\p) C^{\frac{\lambda}{2}}_{j+1-\lambda}(\Theta) d\omega^\p
  = 0 \qquad\text{for $j \ge n-1$.}
 \end{equation}
Repeating the same calculation in coordinates valid near the interior of $\lf(Z) \cap \bff$, we arrive at the claim.
\end{proof}

\begin{proof}[of \Cref{thm:logs.condition.mod}] To begin with, we will consider the modification terms only and again consider them on the intersection of fibres of $\pi_{l}$ with a neighbourhood of ${\lf(Z)} \cap \bff \cap \lf(Y)$, only. In local product coordinates near this corner, the modification terms are
 \begin{equation}
  \frac{|z|^m}{|z^\p|^{m+\lambda}} C^{\frac{\lambda}{2}}_m(\Theta)
   = (\tfrac{r}{r^\p})^{-m} (r^\p)^{\lambda} (1+(xr)^2)^{\tfrac{m}{2}} C^{\frac{\lambda}{2}}_m(\Theta)
 \end{equation}
with $0 \le m \le k-1$ and an additional factor $(\tfrac{r}{r^\p})^{-1} (r^\p)^{-1} (xr)$ for the double layer potentials. This yields terms of the form
 \begin{equation}
  \Bigg\{ \, (\tfrac{r}{r^\p})^{-1} (r^\p)^{-1} (xr) \cdot \Bigg\} \,
  (\tfrac{r}{r^\p})^{1-m-n} (r^\p)^{\lambda+j+2-2n} f_j(\omega^\p)
  C^{\frac{\lambda}{2}}_{m}(\Theta) (1+(xr)^2)^{\tfrac{m}{2}} \widetilde{\mu} \;,
 \end{equation}
where $\widetilde{\mu}$ is again a smooth and non-vanishing $b$-density. Observe that these terms do not \qt{interfere} with the terms from the unmodified layer potentials, as a comparison of exponents for $\tfrac{r}{r^\p}$ gives (if $m^\p$ denotes the $m$ from the proof of \Cref{thm:logs.condition}) $1-m-n = m^\p + \lambda +1-n$ and so $m^\p = -\lambda - m < 0$, which cannot be. The diagonal elements with respect to $r^\p$ and $(\tfrac{r}{r^\p})$ from the modification terms are obtained exactly for $m = n-1-\lambda-j$ or equivalently $j = \alpha_\pm(k) + k - m$. Thus, they exist for any $\alpha_\pm(k) + 1 \le j \le \alpha_\pm(k) + k$ respectively $\pm 1 + 1 - k \le j \le \pm 1$. Then, as before, we arrive at the claim.
\end{proof}

\subsection{Regarding Jump Relations}\label{APP.sec:bdy}

In this section of the appendix we turn to proving the jump formulae \crefrange{eq:jump.single}{eq:jump.single.normal}. For clarity, the proof is divided into four lemmata.

\begin{lemma}\label{thm:jump.lemma.1} The leading order term of $\Op{\single}f$ at $Y$ is given by $\Op{\calN}f$. More precisely, given a product neighbourhood $[0,1) \times Y$ of $Y \subset X$, there is a function $R \in \dot{C}^0\big([0,1);\calA(Y)\big)$ so that, on this product neighbourhood,
 \[ \Op{\single}f (\chi,y) = \Op{\calN}f(y) + R(\chi,y) \;. \]
\end{lemma}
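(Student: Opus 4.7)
The plan is to combine \Cref{thm:smooth.x} with the leading-order formula of \Cref{thm:logs.exa}. By \Cref{thm:layer.ops.phg,thm:smooth.x}, $\Op{\single}f$ is polyhomogeneous on $X$ and smooth up to $Y$, so its polyhomogeneous expansion near $Y$ begins with a term $a_0(y)$ independent of $\chi$, and the remainder $R(\chi,y) \deq \Op{\single}f(\chi,y) - a_0(y)$ automatically lies in $\dot{C}^0\big([0,1);\calA(Y)\big)$ with $R(0,\cdot) = 0$. It therefore suffices to identify $a_0$ with $\Op{\calN}f$.

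To compute $a_0(y)$ at an interior point of $Y$, I would work in the adapted coordinates $(a,b,\omega,y)$ on $\calPD$ near $\lf(Y) \cap \df$ used in the proof of \Cref{thm:smooth.x}, where $b = |z-z^\p|$, $\eta = (z-z^\p)/b = (a, \sqrt{1-a^2}\,\omega) \in \Sph^{n-1}_+$ and $y \in Y$. In these coordinates $\pi_l$ factorises as a fibration off $\Sph^{n-2}$ followed by the $b$-fibration $(a,b) \mapstoo ab = x$, and by \cref{eq:smoothness.layer.pots.local} the $b$-density $\bsingle\, \pi_r^*(f)$ takes the form $b \cdot f(y - b\sqrt{1-a^2}\,\omega) \cdot \widetilde{\mu}$ with $\widetilde{\mu}$ smooth and non-vanishing, hence has integer index family $(0,1)$ at $(\lf(Y), \df)$. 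The factor $b$ kills the restriction to $\df$, so by \Cref{thm:logs.exa} only the restriction to $\lf(Y) = \{a=0\}$ contributes to $a_0$. Unwinding $\widetilde{\mu}$ from the density relation \cref{eq:kernel.density.ident} and using $\single\big|_{a=0} = b^{2-n}\big/\big[(2-n)\vol \Sph^{n-1}\big]$, one obtains
\[ a_0(y) = \int_0^\infty \int_{\Sph^{n-2}} \frac{b^{2-n}}{(2-n)\vol\Sph^{n-1}} \, f(y - b\omega) \, b^{n-2} \, d\omega \, db \;, \]
which is precisely $\Op{\calN}f(y)$ expressed in polar coordinates $y^\p = y - b\omega$ centred at $y$.

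The analogous computation near the corner $\Gamma$ uses coordinates adapted to $\lf(Y) \cap \df \cap \bff$ (as in the second half of the proof of \Cref{thm:smooth.x}) and yields the same identification; gluing with a partition of unity then gives $a_0 = \Op{\calN}f$ globally. The main obstacle I expect is the careful bookkeeping of the density factor $\widetilde{\mu}\big|_{a=0}$: verifying that the combination of $\pi_l^*(\mu)$, $\pi_r^*(|dz^\p|)$ and the polar-coordinate Jacobian on $\Sph^{n-1}_+$ reconstitutes exactly the $b^{n-2}\, d\omega\, db$ that matches $|dy^\p|$ on $Y$. Since everything is built from explicit pull-backs along the blow-down maps of \Cref{ssec:spaces}, this is a routine but delicate normalisation check.
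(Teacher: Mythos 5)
Your overall strategy is the same as the paper's: use \Cref{thm:smooth.x} (plus the Push-Forward Theorem) to know that $\Op{\single}f$ is smooth up to $Y$ so that only the constant coefficient $a_0$ needs to be identified, then compute $a_0$ in the adapted coordinates near $\lf(Y)\cap\df$ using the leading-order formula of \Cref{thm:logs.exa}, and recognise the resulting integral as $\Op{\calN}f$ (which the paper does by recognising it as the push-forward $(\pi^\partial_l)_*\big(\bcalN\,(\pi^\partial_r)^*f\big)$ rather than by passing to polar coordinates, but that is the same identification). The corner computation itself, including the observation that the factor $s=|z-z^\p|$ kills the $\df$-contribution, is correct.

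There is, however, a gap in how you pass from the local model to the global identity $a_0=\Op{\calN}f$. \Cref{thm:logs.exa} is a statement about \emph{compactly supported} $b$-densities on the model corner $\R_+^2$ (and \Cref{thm:log.lemma} likewise), so it only governs the piece of $\bsingle\,\pi_r^*(f)$ cut off near $\lf(Y)\cap\df$; in the coordinates $(a,b,\omega,y)$ this yields $\int_0^{c}$, not $\int_0^\infty$, since these coordinates are only valid near that corner and in particular break down near $\bff$, $\rf$ and $\lf(Z)$ (i.e.\ for $|y-y^\p|$ large). The contribution to $a_0$ from the rest of the fibre of $\pi_l$ is not covered by \Cref{thm:logs.exa} at all: there one must argue separately that on the part of the fibre at positive distance from $\df$ the integrand converges (uniformly on compacta, with convergence of the integral at the faces $\rf$, $\bff$, $\lf(Z)$ guaranteed by the integrability condition on $E$) to $\bcalN\,(\pi_r^\partial)^*f$, and that the piece near $\df$ but away from $\lf(Y)$ contributes $0$ because of the index set $1$ at $\df$. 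This is exactly the decomposition $A_0\cup A_1\cup A_2$ of the fibre in the paper's proof, and it is the bulk of the argument rather than a routine normalisation check; your closing partition-of-unity remark addresses only the gluing between the interior of $Y$ and the corner $\Gamma$, not this interchange of limit and integration over the unbounded part of the fibre. With that dominated/uniform convergence step supplied, your argument closes up and coincides with the paper's.
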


\begin{proof} We already know that $\bsingle$ is polyhomogeneous with index sets $0$ and $1$ at $\lf(Y)$ and $\df$. Thus, the Push-Forward Theorem and \Cref{thm:smooth.x} show that $\Op{\single}f(\chi,y)$ has a well-defined limit as $\chi$ tends to $0$, but we do not know the actual limit. To obtain this limit, we consider the integral which defines the push-forward more closely by splitting the fibre along which we integrate into more manageable parts: Choose a trivialisation of a product neighbourhood $\mathcal{V} \cong [0,1) \times Y$ of $Y$ in $X$ and denote coordinates in $[0,1) \times Y$ by $(\chi,y)$. ($\chi$ is a global boundary defining function for $Y$, in contrast to $x$ or $xr$ which are local.) Note that this also fixes a product neighbourhood of $Y^2$ in $X \times Y$ with coordinates $(\chi,y,y^\p)$. Lift these coordinates to $\calP_D$ and let $\mathcal{U}_0$ be a product neighbourhood of $\lf(Y) \cap \df$, $\mathcal{U}_1$ be one of $\df \setminus \mathcal{U}_0$ and $\mathcal{U}_2$ be one of $\lf(Y) \setminus \mathcal{U}_0$, all in $\calP_D$. Then consider
 \[ A_0 = \pi_{l}^{-1}(\chi,y) \cap \mathcal{U}_0 \quad\text{, }\quad
    A_1 = \pi_{l}^{-1}(\chi,y) \cap \mathcal{U}_1 \quad\text{ and }\quad
    A_2 = \pi_{l}^{-1}(\chi,y) \setminus \Big(A_0 \cup A_1\Big) \;. \]
As long as $\chi$ is sufficiently small, these sets cover the fibre $\pi_{l}^{-1}(\chi,y)$. To begin with, we focus on the case $y \in \mathring{Y}$, that is we stay away from the face $\bff$.

Over $A_2$, the variables with respect to which we integrate, $y$, and take the limit, $\chi \to 0$, are independent. Or, put differently, we have $e_{\pi_{l}}(\rf_\calPD,Y) = 0$. The standard theorems on interchanging integration and taking limits then show that
 \[ \Big(\pi_l\big|_{A_2}\Big)_*\left( \bsingle \,\cdot\, \pi_{r}^*(f) \right)(\chi,y)
     \too \Big(\pi_l^\partial\big|_{B_2}\Big)_* \left( \bcalN \,\cdot\, (\pi_r^\partial)^*(f) \right) (y)
  \quad\text{as $\chi \to 0$,} \]
which is due to uniform convergence $\big(\bsingle \, \pi_r^*(f)\big)(\chi,y,y^\p) \too \big(\bcalN \, \pi_r^*(f)\big)(y,y^\p)$ as $\chi \to 0$ and where $B_2 = (\pi_l^\partial)^{-1}(y) \cap \mathcal{U}_2$, in which we deliberately confuse $\mathcal{U}_2$ with the corresponding subset of $Y^2_b$.

Over $A_1$, let $s = |y-y^\p|$, $\eta = \tfrac{y - y^\p}{s}$ and consider local coordinates $\big(\chi,\tfrac{s}{\chi},\eta,y\big)$. These are valid on $\mathcal{U}_1$ and we can again interchange the order of integration (with respect to $(\tfrac{s}{\chi},\eta)$) and taking the limit (as $\chi \too 0$). This time, since $\bsingle \pi_r^*(f)$ has index set $1$ at $\df$, we obtain
 \[ \Big(\pi_l\big|_{A_1}\Big)_* \left( \bsingle \,\cdot\, \pi_{r}^*(f) \right)(\chi,y)
     \too 0 \;. \]

Regarding $A_0$, first of all we refer to our considerations in \Cref{APP.sec:logs} and in particular to \Cref{thm:logs.exa}, again. These show that the corner $\df \cap \lf(Y)$ does not contribute any logarithmic terms and that the leading coefficient of
 \begin{equation}\label{pot.concl.eq:lead.coeff.relations}
  \Big(\pi_l\big|_{A_0}\big)_* \left( \bsingle \,\cdot\, \pi_{r}^*(f) \right) (\chi,y)
 \end{equation}
with respect to $\chi$ is given solely by integrating over $A_0^\p = \pi_{l}^{-1}(0,y) \cap \lf(Y) \cap \mathcal{U}_0$. Again, this is because $\bsingle$ vanishes at $\df$. But this integral is precisely
 \[ \Big(\pi_l^\partial\big|_{B_0}\Big)_* \left( \bcalN \,\cdot\, (\pi_r^\partial)^*(f) \right) (y) \;, \]
where $B_0 = (\pi_l^\partial)^{-1}(y) \cap \mathcal{U}_0$ because $\bcalN$ is the leading order term of $\bsingle$ at $\lf(Y)$.

\myskip Now since $B_0 \cup B_2 = (\pi_l^\partial)^{-1}(y)$, we obtain the claim as long as we integrate over fibres that stay away from $\bff$ hence on $\mathring{Y}$. Near $\bff$, we need to use different sets of coordinates, but the calculations work in just the same way proving the claim on $Y$.
\end{proof}

\begin{lemma}\label{thm:jump.lemma.2} The leading order term of $\Op{\double}f$ at $Y$ is given by $\tfrac{1}{2}f$. More precisely, given a product neighbourhood $[0,1) \times Y$ of $Y \subset X$, there is a function $R \in \dot{C}^0\big([0,1);\calA(Y)\big)$ so that, on this product neighbourhood,
 \[ \Op{\double}f (\chi,y) = \tfrac{1}{2}f(y) + R(\chi,y) \;. \]
\end{lemma}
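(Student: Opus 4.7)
My plan is to follow the same three-piece fibre decomposition used in the proof of \Cref{thm:jump.lemma.1}, but to exploit the fact that, by \Cref{thm:kernels.phg}, the index family of $\bdouble$ has index $1$ at $\lf(Y)$ and index $0$ at $\df$, precisely swapping the orders of $\bsingle$ on those two faces. This reversal is what replaces the nontrivial boundary integral $\Op{\calN}f(y)$ by the jump $\tfrac{1}{2}f(y)$: the vanishing of $\bdouble$ at $\lf(Y)$ (equivalently $\calK = 0$) kills the boundary layer contribution, while the non-vanishing on $\df$ produces a concentrated contribution from a $\chi$-scale neighbourhood of the diagonal.

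Concretely, I would fix the same trivialisation $\mathcal{V} \cong [0,1)_\chi \times Y$ and decompose $\pi_l^{-1}(\chi, y)$ into $A_0, A_1, A_2$ as in the preceding proof, corresponding to a neighbourhood of $\lf(Y) \cap \df$, a neighbourhood of $\df$ away from $\lf(Y)$, and a neighbourhood of $\lf(Y)$ away from $\df$. For $y \in \mathring{Y}$, on $A_0 \cup A_1$ I would introduce the rescaled coordinates $\tau = |y - y^\p|/\chi$ and $\eta = (y - y^\p)/|y-y^\p| \in \Sph^{n-2}$; a direct computation (using $x^2 + |y - y^\p|^2 = \chi^2(1 + \tau^2)$ and $|dy^\p| = (\chi\tau)^{n-2} \cdot \chi\, d\tau\, d\eta$) gives the $\chi$-independent expression
\begin{equation}
\double(\chi, y; y^\p)\,|dy^\p| = \frac{1}{\vol\Sph^{n-1}} \cdot \frac{\tau^{n-2}}{(1+\tau^2)^{n/2}}\, d\tau\, d\eta .
\end{equation}
Over $A_2$, $\bdouble$ vanishes to first order at $\lf(Y)$ and $f$ is bounded on the relevant region, so by the same interchange argument as in \Cref{thm:jump.lemma.1} the contribution tends to $0$. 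Over $A_0 \cup A_1$, dominated convergence applies since $\tau^{n-2}(1+\tau^2)^{-n/2}$ is integrable at infinity and $f(y - \chi\tau\eta) \to f(y)$ pointwise by continuity of the polyhomogeneous $f$; the limit is
\begin{equation}
\frac{f(y)\,\vol\Sph^{n-2}}{\vol\Sph^{n-1}} \int_0^\infty \frac{\tau^{n-2}}{(1+\tau^2)^{n/2}} \, d\tau = \tfrac{1}{2} f(y),
\end{equation}
where the evaluation uses the Beta integral $\tfrac{1}{2}B\big(\tfrac{n-1}{2}, \tfrac{1}{2}\big)$ together with $\vol\Sph^{k-1} = 2\pi^{k/2}/\Gamma(k/2)$. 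Equivalently, this is the statement that $\double$ is half the Poisson kernel for $\R^n_+$, whose total mass over $\R^{n-1}$ is $1$.

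To extend this from $\mathring{Y}$ to all of $Y$, I would repeat the calculation in the adapted corner coordinates $(xr, r, \omega)$ from \cref{SK.eq:corner.coordinates}, exactly as in the last paragraph of the proof of \Cref{thm:jump.lemma.1}; the leading-order identity has the same form. The regularity $R \in \dot{C}^0\big([0,1); \calA(Y)\big)$ of the remainder is then immediate from the smoothness of $\Op{\double}f$ up to $Y$ given by \Cref{thm:smooth.x} (with $k = 0$) and the polyhomogeneity tangent to $Y$ provided by the Push-Forward Theorem.

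The main obstacle I foresee is making the dominated convergence argument on $A_0$ uniform across $y \in Y$ all the way up to $\Gamma$, where $f$ may grow polynomially; the growth must be absorbed by the $r^\p$-factors that appear when pulling back $f$ to the front face $\bff$, in the same spirit as the bookkeeping carried out in \Cref{APP.sec:logs}. Once this is handled near $\bff$ via the second coordinate system, the rest of the argument is a routine Beta-function computation.
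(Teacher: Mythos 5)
Your proof is correct and takes essentially the same approach as the paper's: the same fibre decomposition into $A_0,A_1,A_2$, the same vanishing argument on $A_2$ (from the index set $1$ of $\bdouble$ at $\lf(Y)$), and the same Beta-function computation on the diagonal piece, including the identification $\int_0^\infty\tau^{n-2}(1+\tau^2)^{-n/2}\,d\tau=\tfrac{\sqrt{\pi}\,\Gamma(\frac{n-1}{2})}{2\Gamma(\frac{n}{2})}$, which combined with the ratio of sphere volumes gives $\tfrac12$. The only difference is presentational: the paper keeps $A_0$ and $A_1$ separate, using $\tfrac{\chi}{s}$ as the projective coordinate near $\lf(Y)\cap\df$ and $\tfrac{s}{\chi}$ near $\df$ away from $\lf(Y)$, taking the two limits separately (over $[0,c]$ and $[0,c^{-1}]$) and then summing to the full $\int_0^\infty$; you merge them from the outset into a single dominated-convergence argument in the rescaled variable $\tau=s/\chi$, deferring the manifold-with-corners regularity of the remainder to \Cref{thm:smooth.x} and the Push-Forward Theorem. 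Both are sound. The near-$\Gamma$ issue you flag is exactly the one the paper dispatches by redoing the computation in the corner coordinates $(xr,\tfrac{s}{xr},\eta,y)$ near $\bff\cap\df$, and your suggested handling coincides with the paper's.
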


\begin{proof} We proceed in the exact same way as in the proof of \Cref{thm:jump.lemma.1}: As for the integral over $A_2$, since $\bdouble$ has index set $1$ at $\lf(Y)$, we obtain
 \[ \Big(\pi_l\big|_{A_2}\Big)_* \left( \bdouble \,\cdot\, \pi_{r}^*(f) \right)(\chi,y) \too 0 \quad\text{as $\chi \to 0$.} \]
Using the same local coordinates $\big(\chi,\tfrac{s}{\chi},\eta,y\big)$ as before, on $A_1$ we obtain
 \begin{equation}\label{eq:double.local.int}
  \preb{\double} \,\cdot\, \pi_r^*(f) = \tfrac{1}{\vol\Sph^{n-1}} \, \frac{(\frac{s}{\chi})^{n-1}}{(1+(\frac{s}{\chi})^2)^{\tfrac{n}{2}}} f\big(y-\chi (\tfrac{s}{\chi})\eta\big) \bfracdens{s}{\chi} d\eta \pi_l^*(\mu)
 \end{equation}
and interchanging the order of integration and taking the limit, we obtain part of the contribution from $\df$ to the leading order term:
 \begin{equation}
  \Big(\pi_l\big|_{A_1}\Big)_* \left( \bdouble \,\cdot\, \pi_{r}^*(f) \right)(\chi,y)
  \xrightarrow{\,\chi \to 0\,} \left( \frac{\vol\Sph^{n-2}}{\vol\Sph^{n-1}}
         \int_0^c \frac{(\tfrac{s}{\chi})^{n-1}}{(1+(\tfrac{s}{\chi})^2)^{\tfrac{n}{2}}} \bfracdens{s}{\chi} \right)
         f(y) \big|dy\big| \;,
 \end{equation}
where $c$ is chosen according to the trivialisation $\pi_l^{-1}(0,y) \cap \mathcal{U}_1 \cong Y \times [0,c) \times \Sph^{n-2}$. Still following the lines of the proof of \Cref{thm:jump.lemma.1}, we see that the contribution from $A_0$ to the leading order term is given by integrating over $A_0^\pp = \pi_l^{-1}(0,y) \cap \df \cap \mathcal{U}_0$. Using coordinates $\big(\tfrac{\chi}{s}, s, \eta, y\big)$ and setting $s = 0$, we arrive at
 \[ \Big(\pi_l\big|_{A_0}\Big)_* \left( \bdouble \,\cdot\, \pi_{r}^*(f) \right)(\chi,y)
  \xrightarrow{\,\chi \to 0\,} \left( \frac{\vol\Sph^{n-2}}{\vol\Sph^{n-1}} \int_0^{c^{-1}} \frac{ (\frac{\chi}{s})^{n-1}}{\big(1+(\frac{\chi}{s})^2\big)^{\tfrac{n}{2}}} \bfracdens{\chi}{s} \right) f(y) |dy| \;, \]
with the same constant $c$ as before. Now, combining the contributions from $A_0$ and $A_1$, we get
 \begin{align*}
  \lim_{\chi \to 0} \Op{\double}f (\chi,y)
   &= \left( \frac{\vol\Sph^{n-2}}{\vol\Sph^{n-1}} \int_0^\infty \frac{t^{n-1}}{(1+t^2)^{\tfrac{n}{2}}} \bdens{t} \right) f(y) |dy| \\
   &= \left( \frac{\vol\Sph^{n-2}}{\vol\Sph^{n-1}} \frac{\sqrt{\pi} \, \Gamma(\frac{n-1}{2})}{2 \, \Gamma(\frac{n}{2})} \right) f(y) |dy|
   = \tfrac{1}{2} f(y) |dy| \;.
 \end{align*}
where in this single instance, $\Gamma$ denotes the Gamma-function. As $A_1 \cup A_2 = \pi_{l}^{-1}(\chi,y)$, we have just shown that the leading order term of $\Op{\double}f$ at $\mathring{Y}$ is indeed, after identifying it with a function via \Cref{eq:fct.dens.ident.detail.1}, $\tfrac{1}{2}f$. Performing these calculations in local coordinates valid near $\bff \cap \df$ (for instance, using $\big( xr, \tfrac{s}{xr}, \eta, y\big)$) will give the same limit as we approach $Y \subset X$. This completes the proof.
\end{proof}

For the following lemma, relating the subleading order term of $\Op{\single}f$ to the leading order term of $\Op{\double}f$, we need to be aware that only the constant term in a polyhomogeneous expansion is defined independently of the choice of boundary defining function. This is why we formulate the next result in specific coordinates, only.

\begin{lemma}\label{thm:jump.lemma.3} The leading order term of $\partial_\nu \Op{\single}f$ at $Y$ is $\tfrac{1}{2}f$. In particular, the subleading order term of\, $\Op{\single}f$ at $Y$ is $\tfrac{1}{2}f(r,\omega)$ with respect to $x$ and $\tfrac{1}{2r} f(r,\omega)$ with respect to $xr$.
\end{lemma}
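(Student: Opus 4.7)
The plan is to exploit the \emph{classical symmetry} of the fundamental solution: since $P(z,z^\p) = \tfrac{|z-z^\p|^{2-n}}{(2-n)\vol\Sph^{n-1}}$ depends only on $|z-z^\p|$, one has $\partial_{x(z)} P = -\partial_{x(z^\p)} P$. Restricted to $(z,z^\p) \in \mathring{X} \times Y$ with $z \neq z^\p$, this yields the pointwise kernel identity
\[
\partial_{x}\,\single(z,z^\p) \;=\; \double(z,z^\p),
\]
as one also verifies directly from \cref{def:single,def:double}: $\partial_x |z-z^\p|^{2-n} = -(n-2)\, x\, |z-z^\p|^{-n}$, since the $x$-component of $z-z^\p$ equals $x$ whenever $z^\p \in Y$, and this is precisely the expression defining $\double$.

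First, I would promote this to the operator identity $\partial_x \Op{\single}f = \Op{\double}f$ on $\mathring X$. For every fixed $z \in \mathring X$, both $\single(z,\cdot)$ and $\partial_x \single(z,\cdot)$ are smooth on $Y$ with decay matching the integrability condition for $f \in \calA^E(Y)$ (the thresholds $\alpha_+(0)$ and $\alpha_-(0)$ differ by $2$, well within the allowed range), so differentiation under the integral is justified by dominated convergence. Combined with \Cref{thm:jump.lemma.2} this gives
\[
\partial_x \Op{\single}f (\chi,y) \;=\; \tfrac{1}{2} f(y) \;+\; R^-(\chi,y)
\qquad\text{with } R^- \in \dot C^0\big([0,1); \calA(Y)\big),
\]
and hence $\partial_\nu \Op{\single}f = -\partial_x \Op{\single}f$ attains the claimed leading-order limit $\pm\tfrac{1}{2}f$ at $Y$, with the sign dictated by the convention $\nu = -\partial_x$ as in \cref{eq:jump.single.normal}.

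For the ``in particular'' claim I would combine \Cref{thm:smooth.x}, which guarantees smoothness of $\Op{\single}f$ up to $Y$, with \Cref{thm:jump.lemma.1}, giving $\Op{\single}f|_Y = \Op{\calN}f$. The two-term Taylor expansion at $Y$ in coordinates $(x,r,\omega)$ near $\mathring Y$, with the $x$-derivative at $Y$ computed via the previous step, reads
\[
\Op{\single}f(x,r,\omega) \;=\; \Op{\calN}f(r,\omega) \;+\; \tfrac{1}{2} f(r,\omega) \, x \;+\; x^2 \cdot (\text{smooth}),
\]
which identifies the subleading coefficient as $\tfrac{1}{2} f(r,\omega)$. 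The corresponding representation in the corner coordinates $(xr, r, \omega)$ near $\Gamma$ then follows by the substitution $x = (xr)/r$, which absorbs a factor $r^{-1}$ into the subleading coefficient, giving $\tfrac{1}{2r} f(r,\omega)$.

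The one delicate point is the legitimacy of differentiating under the integral \emph{uniformly} up to $Y$, since $\single(z,\cdot)$ develops a singularity at $z|_Y$ as $x \to 0^+$. I sidestep this by establishing $\partial_x \Op{\single}f = \Op{\double}f$ only pointwise on the open interior $\mathring X$, where no boundary singularity is encountered, and then invoking the polyhomogeneity of both sides on $\calPD$ from \Cref{thm:kernels.phg} together with smoothness up to $Y$ from \Cref{thm:smooth.x} to propagate the identity continuously to the boundary via the uniqueness of the asymptotic expansion.
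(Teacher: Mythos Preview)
Your proof is correct and follows essentially the same route as the paper's. The paper phrases the key kernel identity via the evenness of $\single$ and oddness of $\scal{\nu(z^\p)}{z-z^\p}$ under the swap $z \leftrightarrow z^\p$, obtaining $(\partial_\nu \single)(z,z^\p) = -\double(z,z^\p)$ and hence $\partial_\nu\Op{\single}f = -\Op{\double}f$; you arrive at the equivalent $\partial_x\single = \double$ by direct differentiation, and your extra care in justifying differentiation under the integral on $\mathring X$ (and then passing to the boundary via polyhomogeneity) fills in a step the paper leaves implicit. For the subleading coefficient the paper writes $\partial_\nu = -\tfrac{1}{x}(x\partial_x) = -\tfrac{r}{xr}(xr\partial_{xr})$ and compares expansions, which amounts to exactly your two-term Taylor expansion and the substitution $x = (xr)/r$.
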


\begin{proof} Since $\single$ is even and $\scal{\nu(z^\p)}{z-z^\p}$ is odd with respect to interchanging $z$ and $z^\p$, we have equality of kernels
 \[ \big(\partial_\nu \single\big)(z,z^\p) = \big(\partial_{\nu^\p} \single\big)(z^\p,z)
    = \double (z^\p,z) = -\double(z,z^\p) \]
and hence $\partial_\nu \big[\Op{\single}f\big] = -\Op{\double}f$. Consequently, both sides have the same leading order term. Noting that $\partial_\nu = -\tfrac{1}{x} (x\partial_x) = -\tfrac{r}{xr}(xr\partial_{xr})$, a comparison of the expansions of both layer potentials in local coordinates $(x,r,\omega)$ and $(xr,r,\omega)$ completes the proof.
\end{proof}

\begin{lemma}\label{thm:jump.lemma.4} \Crefrange{thm:jump.lemma.1}{thm:jump.lemma.3} hold for the modified layer potentials as well.
\end{lemma}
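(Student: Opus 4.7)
The plan is to decompose the modified kernels as $\single_k = \single - M_k^S$ and $\double_k = \double + M_k^D$, where $M_k^S$ and $M_k^D$ denote the finite modification sums of \cref{def:mod.single,def:mod.double} (with the cut-off $\psi$ absorbed), and to show that these corrections contribute only to the smooth remainders $R_k^\pm$ already appearing in \Crefrange{thm:jump.lemma.1}{thm:jump.lemma.3}. The first step is to note that the modification kernels have no conormal singularity at the diagonal: they are smooth in $z$ by harmonicity (\Cref{thm:hom.harm.decomp}) and smooth in $z^\p$ on $\supp\psi$. By \Cref{APP.thm:mod.phg} they lift to be polyhomogeneous on $\calPD$, and the Push-Forward Theorem, used as in the proofs of \Cref{thm:jump.lemma.1,thm:jump.lemma.2} but now without any singular analysis at $\df$, shows that $\Op{M_k^S}f$ and $\Op{M_k^D}f$ are smooth up to $Y$, so that differentiation in $z$ commutes with integration over $Y$.

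For \cref{eq:jump.single} I would restrict $M_k^S$ to $Y \times Y$; a direct comparison of \cref{def:mod.single} and \cref{def:bdy.single.kernel} shows that this restriction equals $\calN - \calN_k$, so $\Op{M_k^S}f\big|_{\chi = 0} = (\Op{\calN} - \Op{\calN_k})f$, and combining with \Cref{thm:jump.lemma.1} yields the claim. For \cref{eq:jump.double}, each summand of $M_k^D$ carries an explicit factor of $x$ (cf.~\cref{def:mod.double}), so $M_k^D$ vanishes identically on $Y \times Y$, and \Cref{thm:jump.lemma.2} immediately produces the formula with a new $R_k^-$ that differs from the unmodified one by the smooth contribution $\Op{M_k^D}f$.

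The normal-derivative identity \cref{eq:jump.single.normal} is the most delicate step, since the classical symmetry argument of \Cref{thm:jump.lemma.3} no longer applies: $\single_k$ is not symmetric in $z,z^\p$, and one checks directly that $\partial_\nu M_k^S \ne M_k^D$ as kernels on $X \times Y$. Instead, I would exploit the following parity observation: for $z^\p \in Y$ one has $\scal{z}{z^\p} = y \cdot y^\p$, so that $\Theta = (y \cdot y^\p)/(|z|\,|y^\p|)$ depends on $x$ only through $|z| = \sqrt{x^2 + |y|^2}$, which is an even function of $x$. Consequently each building block $|z|^m |z^\p|^{2-n-m} C^{(n-2)/2}_m(\Theta)$ of $M_k^S$ is even in $x$, so that $\partial_x M_k^S\big|_{Y \times Y} = 0$ and hence $\partial_\nu\Op{M_k^S}f\big|_{\chi=0} = 0$. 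Combining this with \Cref{thm:jump.lemma.3} applied to the unmodified $\single$ yields \cref{eq:jump.single.normal} with the same $R_k^-$ as in the modified \cref{eq:jump.double}. The local coordinate expressions \cref{eq:jump.single.local.1,eq:jump.single.local.2} for the modified case then follow exactly as in \Cref{thm:jump.lemma.3}, using $\partial_\nu = -x^{-1}(x\partial_x) = -(r/(xr))((xr)\partial_{xr})$ and the Taylor expansion of $\Op{\single_k}f$ in the defining variable.

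The main obstacle is this last parity argument: identifying $\partial_\nu M_k^S\big|_Y \equiv 0$ rests on the specific structural feature that $Y$ is the coordinate hyperplane $\{x^\p = 0\}$ in $\R^n$, so that every multipole building block of the single-layer modification depends on $x$ only through $|z|$ and hence through $x^2$. Without this feature, one would have to confront the honest mismatch between $\partial_\nu M_k^S$ and $M_k^D$ term-by-term, which in a general geometry would force a more elaborate analysis tying the modification scheme to the full $b$-calculus boundary data.
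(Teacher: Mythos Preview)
Your proposal is correct and follows essentially the same strategy as the paper: decompose the modified kernels into the unmodified ones plus the finite modification sums, then show that the modifications do not alter the leading coefficients established in \Cref{thm:jump.lemma.1,thm:jump.lemma.2,thm:jump.lemma.3}. For the single-layer jump you both restrict the modification to $Y\times Y$ and identify it with $\calN-\calN_k$; for the double-layer jump you both observe that the explicit factor of $x$ kills the modification at $\lf(Y)$ (the paper phrases this via the index sets $(1,n)$ at $(\lf(Y),\df)$, but it is the same fact).

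The one genuine difference is the argument for $\partial_\nu\Op{\single_k}f$. The paper computes $\partial_x\big(|z|^m C^{(n-2)/2}_m(\Theta)\big)\big|_{x=0}$ explicitly by the chain rule and the Gegenbauer identity $\partial_t C^\lambda_m(t)=2\lambda C^{\lambda+1}_{m-1}(t)$, obtaining an expression with an overall factor of $x$ that vanishes at $x=0$. Your parity observation---that for $z^\p\in Y$ one has $\scal{z}{z^\p}=y\cdot y^\p$, so $\Theta$ and $|z|$ depend on $x$ only through $x^2$, hence each modification term is even in $x$---is a cleaner and more conceptual route to the same conclusion $\partial_x M_k^S\big|_{x=0}=0$. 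It sidesteps the explicit identity entirely, at the cost (which you correctly flag) of being visibly tied to the half-space geometry; the paper's computation is equally tied to it, just less transparently so.
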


\begin{proof} The modification terms (as $b$-densities) for the single layer potential have index sets $0$ and $n-1$ at $\lf(Y)$ and $\df$, and we have uniform convergence
 \[ \big(\bsingle_k\,\pi_r^*(f)\big)(\chi,y,y^\p) \too \big(\bcalN_k\,\pi_r^*(f)\big)(y,y^\p) \]
for $(y,y^\p)$ in a compact subset of $\lf(Y)$. Therefore, \Cref{thm:jump.lemma.1} continues to hold with $\single$ and $\calN$ replaced by $\single_k$ and $\calN_k$. The modification terms for the double layer potential have index sets $1$ and $n$ at $\lf(Y)$ and $\df$ and consequently do not contribute to the leading order term at all, whence \Cref{thm:jump.lemma.2} holds with $\double$ replaced by $\double_k$. As for \Cref{thm:jump.lemma.3}, we have that
 \[ \partial_x \left( |z|^m C^{\frac{n-2}{2}}_m(\Theta)\middle) \right|_{x=0}
     = x \left( m |y|^{m-2} C^{\frac{n-2}{2}}_m(\Theta^\p) + (n-2) \Theta^\p |y|^{m} C^{\frac{n}{2}}_{m-1}(\Theta^\p) \middle) \right|_{x=0} = 0 \;, \]
using $\partial_t C^\lambda_m(t) = 2\lambda C^{\lambda+1}_{m-1}(t)$ (see \cite[(4.7.14)]{Sze39}) and where we wrote $\Theta^\p = \Theta\big|_{x=0}$. Thus, the modification terms do not contribute to the leading order term of $\partial_\nu\Op{\single_k}f$ which is to say that \Cref{thm:jump.lemma.3} holds for the modified layer potentials as well.
\end{proof}

\section{Background Material}\label{APP.sec:defs}

The following material can be found in different forms and flavours in various sources, for instance in \cite{Mel92,Mel93,Mel96,Loy98,MM98,Gri01,GH09,GH14}. We will in particular make use of \cite{Mel96} and \cite{GH09,GH14}.

\subsection{Manifolds with Corners}\label{APP.ssec:mwcs}

In the same way in which a manifold is modeled over open subsets of $\R^n$, manifolds with corners are modeled over relatively open subsets of
 \begin{equation}\label{APP.eq:Rnk}
  \R^n_k = \big\{\, x \in \R^n \,\big|\, x_1,\dotsc x_k \ge 0 \,\big\} \;,
 \end{equation}
i.e., over restrictions to $\R^n_k$ of open subsets of $\R^n$. A function $f : \Omega \too \Omega^\p$ between relatively open subsets of $\R^n_k$ respectively $\R^m_l$ is called \emph{smooth (up to the boundary)}, if there is a smooth extension $\tilde{f} : \R^n \too \R^m$ of $f$, or, equivalently, if it is smooth in the interior $\mathring{\Omega}$ of $\Omega$ with all derivatives bounded on compact subsets of $\Omega$.

\begin{define}\label{APP.def:mwc} A \emph{manifold with corners} is a second countable Hausdorff space $X$ so that there is an open cover $\big\{\mathcal{U}_i\big\}_{i\in\N}$ of $X$ and a family of homeomorphisms $\varphi_i : \mathcal{U}_i \too U_i$, $U_i \subset \R^n_{k_i}$ open for some $0\le k_i \le n$, such that for all $i$, $j$, the coordinate changes $\varphi_i \circ \varphi_j^{-1}$ are smooth up to the boundary and where all boundary hypersurfaces are embedded.
\end{define}
Any manifold with corners has, locally, \emph{product coordinates} $(x,y) \in \R^k_+ \times \R^{n-k} = \R^n_k$, where the vanishing of the $x_i$ determines the boundary hypersurfaces. To precisely define the term \emph{boundary hypersurface}, consider coordinates \emph{based at $p\in X$}, i.e., a chart $(\mathcal{U},\varphi)$ such that $\varphi(p) = 0$. This in fact fixes a minimal $k$ for which a neighbourhood of $p$ can be mapped diffeomorphically onto an open subset of $\R^n_k$. This minimal $k$ is called the \emph{codimension of $p$ in $X$}. Then,
 \[ \partial_k X = \big\{\, p \in X \,\big|\, \text{$p$ has codimension $k$ in $X$} \,\big\} \;, \]
is independent of the choice of the chart. We call the closure of a connected component of $\partial_k X$ a \emph{boundary face of codimension $k$} of $X$, or a \emph{boundary hypersurface} in the case $k=1$. The set of codimension $k$-faces of $X$ will be denoted by $\calM_k(X)$, $\calM(X)$ is the set of all boundary faces.

Let $H$ be a boundary hypersurface of a manifold with corners. A \emph{boundary defining function} for $H$ is a smooth, non-negative function $\rho$ defined on a neighbourhood of $H$ such that $H = \rho^{-1}(0)$ and near each point $p$ in $H$, there are local coordinates with $\rho$ as first element (which is equivalent to saying that $d\rho \neq 0$ at $H$). In fact, on a manifold with corners, we may always assume that boundary defining functions are globally defined.

%

There are multiple possible notions of submanifolds of manifolds with corners, see \cite[1.7 ff.]{Mel96}, but we will mostly use only one of them $p$-submanifolds or \emph{product submanifolds}. Product manifolds can best be defined using local coordinates. Away from the boundary, these are just submanifolds in the usual sense, whereas near the boundary, if a manifold with corners can locally be identified with a neighbourhood of the origin in $\R^n_k$, then a $p$-submanifold corresponds to the intersection of one or more of the coordinate planes. The following precise definition is taken from \cite{GH09}.

\begin{define}\label{APP.def:p.sub} A \emph{p-submanifold} of a manifold with corners $X$ is a subset $Y \subset X$ that satisfies the following local product condition: For each $p \in Y$ there is a neighbourhood $O$ of $p$ in $X$ and coordinates $x_1,\dotsc,x_k,y_{k+1},\dotsc,y_{n-k}$ such that $Y$ is given by the vanishing of some subset of these coordinates. Here, $k$ is the maximal codimension of $O$ and $x_1,\dotsc,x_k$ are boundary defining functions for the boundary hypersurfaces of $O$. If $Y$ is given everywhere locally by the vanishing of a subset of the $y_j$, we call it an \emph{interior p-submanifold}, otherwise we call it a \emph{boundary p-submanifold}.
\end{define}

Using the definitions, it is easy to see that boundary faces of a manifold with corners are always p-submanifolds and that any p-submanifold of a manifold with corners is a manifold with corners in its own right. In particular, if $Y$ is a $p$-submanifold of $X$ and $p \in Y$, we may choose \emph{adapted coordinates} near $p$ by taking the coordinates from \Cref{APP.def:p.sub} and refining it by using boundary defining functions.

\subsection{Polyhomogeneity}\label{APP.ssec:phgy}

In the following, we define spaces of functions with prescribed behaviour near boundary faces of manifolds with corners. The definitions from this subsection are taken from \cite{GH09}, but originate from \cite{Mel92}.

\begin{define}[Index Sets]\label{APP.def:index.set} An \emph{index set} at a boundary hypersurface $H$ of a manifold with corners $X$ is a discrete subset $G \subset \C \times \N_0$ satisfying:
 \begin{enumerate}
  \item For every $c \in \R$, the set $G \cap \big(\{\Re z < c \} \times \N_0\big)$ is finite
  \item If $(z,p) \in G$ and $0 \le q \le p$, then $(z,q) \in G$
  \item If $(z,p) \in G$, then also $(z+1,p) \in G$
 \end{enumerate}
An \emph{index family} $\calG$ for $X$ is a choice of index sets $\calG(H)$ for each boundary hypersurface $H$ of $X$.
\end{define}
There are some index sets deserving their own notation. The sets
 \begin{equation}\label{APP.eq:integer.index.set}
  k = \myset{(z,0)}{z \ge k}
 \end{equation}
are certainly index sets, as is the empty set. We call these \emph{integer index sets}. If $\mathcal{G}$ is an index family on a manifold with corners and $H$ is a boundary hypersurface of $X$, we denote by $\mathcal{G}_H$ the index family for $H$ given by the index sets $\calG(H^\p)$, where $H^\p$ is a boundary hypersurface of $X$ and $H^\p \cap H \neq \emptyset$. Given index families $\calG = \{ G_1,\dotsc,G_N \}$ and $\calG^\p = \{ G_1^\p,\dotsc,G_N^\p \}$, we may construct new index families by putting
 \begin{align*}
   G_j + G_j^\p &:=
    \big\{\, (z+z^\p, \max (p,p^\p) \,\big|\, (z,p) \in G_j, \, (z^\p,p^\p) \in G_j^\p \,\big\} \text{ and} \\
   \calG + \calG^\p &:= \big( G_1 + G_1^\p,\dotsc, G_N+G_N^\p \big) \;, \\
   \calG \cup \calG^\p &:= \big( G_1 \cup G_1^\p, \dotsc, G_N \cup G_N^\p \big) \;, \\
   \calG \cap \calG^\p &:= \big( G_1 \cap G_1^\p, \dotsc, G_N \cap G_N^\p \big) \;.
 \end{align*}
We will also need the \emph{extended union} of two index sets $G_1$, $G_2$. It is constructed from the union of the index sets by adding certain exponents, which will correspond to logarithmic terms in polyhomogeneous expansions as is defined below:
 \begin{equation}\label{APP.def:extended.union}
  G_1 \bcup G_2 = G_1 \cup G_2 \cup \myset{(z,p_1+p_2+1)}{(z,p_1) \in G_1,\, (z,p_2) \in G_2}
 \end{equation}
To conclude, we define the \emph{real part} of an index set $G$ by
 \begin{equation}\label{APP.def:real.part}
  \Re G = \min\myset{\Re z}{(z,p) \in G} \;.
 \end{equation}
and introduce a partial ordering on the set of index sets by saying that $G_1 < G_2$ if and only if $\Re G_1 < \Re G_2$.

Now we may introduce spaces of functions whose boundary behavior is given by index families. We proceed by defining \emph{polyhomogeneity} on manifolds with boundary first, and then generalising to manifolds with corners. The spaces $\dot{C}^N(X)$ denote functions on $X$ that are smooth up to the boundary and whose first $N$ derivatives vanish at $\partial X$.

\begin{define}[Polyhomogeneous Functions]\label{APP.def:phg} Let $X$ be a manifold with boundary, $x$ a boundary defining function on $X$ and $G$ be an index set for $\partial X$. A function $u$ on $X$ is called \emph{polyhomogeneous with index set $G$ at $H$}, $u \in \calA^G(X)$, if $u \in C^\infty(\mathring{X})$ and if, for $(z,p) \in G$, there are functions $u_{z,p} \in C^\infty(X)$ such that
 \[
  u - \sum_{\substack{(z,p)\in G \\ \Re z \le N }} x^z (\log x)^p u_{z,p} \, \in \, \dot{C}^N(X).
 \]
We denote this by writing
 \begin{equation}\label{APP.eq:phg.expansion}
  u \sim \sum_{(z,p) \in G} x^z (\log x)^p u_{z,p} \;.
 \end{equation}
If $X$ is a manifold with corners and $\mathcal{G}$ is an index family for $X$, a function $u$ on $X$ is called \emph{polyhomogeneous with index family} $\mathcal{G}$, $u \in \calA^{\mathcal{G}}(X)$, if $u \in C^\infty(\mathring{X})$ and near each boundary hypersurface $H$ of $X$, $u$ has an asymptotic expansion as in \cref{APP.eq:phg.expansion}, where $x$ is a boundary defining function for $H$ and
 \begin{enumerate}
  \item the coefficients $u_{z,p}$ lie in $C^\infty\big([0,1);\calA^{\mathcal{G}_H}(H)\big)$ and
  \item the remainder is in $\dot{C}^N\big([0,1);\calA^{\mathcal{G}_H}(H)\big)$,
 \end{enumerate}
both with respect to some trivialisation $[0,1)\times H$ of a neighbourhood of $H$.
\end{define}

For instance, on a manifold with boundary, the index set $G=0$ corresponds to functions that are smooth up to the boundary, $\calA^0(X) = C^\infty(X)$, the empty index set corresponds to smooth functions that vanish to any order at the boundary, $\calA^\emptyset(X) = \dot{C}^\infty(X)$. For this reason, we will also denote the empty index set by $\infty$. We will mostly deal with index sets of the type $G = k$, the space $\calA^k(X)$ comprises of those functions that have a Laurent series in terms of a boundary defining function, where the lowest order term is exactly the $k$-th power of this boundary defining function. In particular, for $k \ge 0$, we have $\calA^{k+1}(X) \subset \dot{C}^k(X)$.

We also want to note the following properties of spaces of polyhomogeneous functions, they follow directly from the definition:
 \begin{align*}
   C^\infty(X) \cdot \calA^\calG(X) &\subset \calA^\calG(X) &
   \calA^\calG(X) + \calA^{\calG^\p}(X) &\subset \calA^{\calG \cup \calG^\p}(X) \\
   \calA^\calG(X) \cdot \calA^{\calG^\p}(X) &\subset \calA^{\calG+\calG^\p}(X) &
   \calA^\calG(X) \cap \calA^{\calG^\p}(X) &\subset \calA^{\calG \cap \calG^\p}(X)
 \end{align*}
With \Cref{APP.def:phg} in mind, the definition of index sets should also become clearer: Condition \emph*{i)} in \Cref{APP.def:index.set} ensures that the asymptotic sum \cref{APP.eq:phg.expansion} makes sense, condition \emph*{iii)} ensures invariance with respect to coordinate changes. Condition \emph*{ii)} on the other hand is necessary for $\calA^\calG(X)$ to be closed under the action of $b$-differential operators.

Polyhomogeneity can be readily generalised to $b$-densities as introduced in \cite{Mel93} and \cite{Mel96}. We do not go into the details, but say the following: Given $s \in \R$, the space of $s$-densities on a vector space $V$ of dimension $n$ is
 \begin{equation}\label{APP.def:dens}
  \Omega^s(V) = \Myset{ \mu : \Lambda^n V\setminus \{0\} \too \R }{ \mu(\lambda \alpha) = |\lambda|^s\mu(\alpha) \text{ for all } \alpha \in \Lambda^n V\setminus \{0\}, \lambda \neq 0 } \;.
 \end{equation}
The space $\Omega^s(V)$ is one-dimensional and spanned by $|dv_1 \wedge \dotsc \wedge dv_n|^s$, where $(v_1,\dotsc,v_n)$ is a basis for $V$. If $s=1$, we simply write $\Omega(V)$. Using $V = TX$, this yields densities defined on a manifold. If $X$ is a manifold with corners, the subset of vector fields that are tangent to the boundary, $\mathcal{V}_b$, is in fact the set of sections of a vector bundle, the \emph{$b$-tangent bundle} $\preb{T}X$.

\begin{define}[$b$-Densities]\label{APP.def:b.dens} The space of \emph{$b$-densities} and \emph{$b$-half densities} on a manifold with corners $X$ is the space of smooth sections of the bundle $\Omega(\preb{T}X)$ respectively $\Omega^{\frac{1}{2}}(\preb{T}X)$, denoted by $C^\infty(X;\bOmega)$ or $C^\infty(X;\bOmegah)$, equipped with the topology of uniform convergence of all derivatives on compact subsets. Spaces of polyhomogeneous $b$-densities will be correspondingly denoted by $\calA^\calG(X;\bOmega)$ respectively $\calA^\calG(X;\bOmegah)$.
\end{define}
Away from the boundary, a $b$-density is just a smooth density. If $p \in \partial X$ and $(x,y)$ are local coordinates near $p$ as in \Cref{APP.def:p.sub}, near $p$ any $\mu \in C^\infty(X;\bOmega^s)$ is of the form
 \begin{equation}\label{APP.eq:b.dens}
  \mu = x_1^s \dotsm x_k^s \big|\tfrac{dx}{x}\big|^s \, \big|dy\big|^s \;,
 \end{equation}
where we write $\big|\tfrac{dx}{x}\big|$ for $\big|\tfrac{dx_1}{x_1} \dotsm \tfrac{dx_k}{x_k}\big|$ and similarly for $\big|dy\big|$. Observe that 1-densities can be invariantly integrated and that $C^\infty(X;\bOmegah) \otimes C^\infty(X;\bOmegah)$ is canonically isomorphic to $C^\infty(X;\bOmega)$, which is to say that the product of two half-densities can be invariantly integrated. This explains the importance of $b$-half densities.

\subsection{\textb-Maps and \textb-Fibrations}\label{APP.ssec:maps.fibrations}

Each category of spaces with a specific structure has its maps respecting this structure. In the case of manifolds with boundary, these are the $b$-maps:

\begin{define}\label{APP.def:b.map} Let $X$ and $Y$ be manifolds with corners and $\{\rho_G\}$, $\{\eta_H\}$, be complete sets of boundary defining functions for the boundary hypersurfaces of $X$ and $Y$, respectively. $F \in C^\infty(X,Y)$ is called a \emph{$b$-map} if, for each $H \in \calM_1(Y)$, either $\eta_H \circ F = 0$ or
 \begin{equation}\label{APP.eq:b.map}
  (\eta_H \circ F) = a_H \prod_{G \in \calM_1(X)} \rho_G^{e_F(G,H)} \;,
 \end{equation}
for smooth, positive functions $a_H$ and non-negative integers $e_F(G,H)$. We say that $F$ is an \emph{interior $b$-map}, if $\eta_H \circ F \neq 0$ for all $H \in \calM_1(Y)$ and else call $F$ a \emph{boundary $b$-map}. The integers $e_F(G,H)$ are called \emph{boundary exponents} and their collection is called the \emph{exponent matrix} of $F$. The set of boundary hypersurfaces $G$ of $X$ such that $e_F(G,H)=0$ for all $H \in \calM_1(Y)$ is called the \emph{null set} of $e_F$, $\nul(e_F)$.

\end{define}

Note that $e_F(G,H) > 0$ if and only if $F(G) \subset H$ and that any $b$-map necessarily maps boundary faces to boundary faces and hence induces a map $\overline{F} : \calM(X) \too \calM(Y)$. This leads to a geometric definition of $b$-fibrations, cf.~\cite{Gri01}. As is pointed out there, it is enough to require the first item for boundary hypersurfaces.

\begin{define}\label{APP.def:b.fibration} Let $X$ and $Y$ be manifolds with corners and $F \in C^\infty(X,Y)$ be a $b$-map. $F$ is called a $b$-\emph{fibration}, if, for any $W \in \calM(X)$,
\begin{enumerate}
 \item $\codim \overline{F}(W) \le \codim W$
 \item $F$ is a fibration $\mathring{W} \too \big(\overline{F}(W)\big)\mathring{}$.
\end{enumerate}
\end{define}

At the heart of the calculus of conormal distributions on manifolds with corners are the twin theorems on the behavior of pull-backs of functions and push-forwards of densities. The following formulations are adapted from \cite{Gri01}, see \cite{Mel92}, \cite{Mel93} or \cite{Mel96} for proofs.

\begin{thm}[Pull-Back Theorem]\label{APP.thm:pbt} Let $F : X \too Y$ be a $b$-map and $f \in \calA^\calG(Y)$. Then, $F^*f$ is polyhomogeneous on $X$ with index family $F^\sharp\calG$ defined as follows: If $G \in \calM_1(X)$, then
 \[ F^\sharp\calG(G)
    = \left\{\, \Big(q + \sum_H e_F(G,H) z_H, p_H\Big) \,\right\} \;, \]
where $q \in \N_0$, $H \in \calM_1(Y)$ and either $(z_H,p_H) = (0,0)$ or $F(G)\subset H$ and $(z_H,p_H) \in \calG(H)$.
\end{thm}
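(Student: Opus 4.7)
The plan is to verify polyhomogeneity of $F^*f$ locally near each boundary point $p \in X$ by direct substitution into the asymptotic expansion of $f$ near $F(p)$, and then to collect the resulting monomials according to the boundary hypersurfaces of $X$ meeting at $p$. Since polyhomogeneity is a local condition and index sets are invariant under multiplication of boundary defining functions by smooth positive factors (property \textit{iii} of \Cref{APP.def:index.set}), it suffices to work in adapted product coordinates $(\rho_1,\dotsc,\rho_k,w)$ centred at $p$, with $\rho_i$ a boundary defining function for $G_i \in \calM_1(X)$, and in corresponding coordinates $(\eta_1,\dotsc,\eta_l,v)$ centred at $F(p) \in Y$. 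The $b$-map condition reads $\eta_j \circ F = a_j \prod_i \rho_i^{e_F(G_i,H_j)}$ with $a_j$ smooth and positive near $p$ whenever $F(p) \in H_j$; for $H_j$ not containing $F(p)$, the pull-back $\eta_j \circ F$ is smooth and bounded away from zero and contributes only smooth factors.

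First I would substitute into the iterated polyhomogeneous expansion
\[ f \sim \sum_{(z_j,p_j) \in \calG(H_j)} \Big(\prod_j \eta_j^{z_j}(\log \eta_j)^{p_j}\Big)\, f_{\vec z,\vec p}(v) \,, \]
valid near $F(p)$ with smooth coefficients $f_{\vec z,\vec p}$. Under pull-back each factor becomes
\[ \eta_j^{z_j}(\log \eta_j)^{p_j} \circ F
    = a_j^{z_j}\Big(\log a_j + \sum_i e_F(G_i,H_j)\log \rho_i\Big)^{p_j} \prod_i \rho_i^{e_F(G_i,H_j)z_j}\,, \]
and the multinomial expansion of the bracketed sum produces, with smooth coefficients in $\log a_j$, a polynomial in the $\log \rho_i$ of total degree at most $p_j$. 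Combining across $j$ and reading off the exponent of each $\rho_i$, a monomial in the resulting expansion contributes $\rho_i^{\sum_j e_F(G_i,H_j) z_j}$ with a log power at most $\sum_j p_j$, and the only $H_j$'s that genuinely occur are those with $F(p)\in H_j$, i.e.\ $F(G_i)\subset H_j$, since the other $e_F(G_i,H_j)$ vanish.

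Next I would Taylor expand the smooth factors $a_j^{z_j}$, $\log a_j$, $f_{\vec z,\vec p}\circ F$ along each $G_i$ in powers of $\rho_i$; these contribute the nonnegative integer shift $q \in \N_0$ appearing in $F^\sharp\calG(G_i)$. Collecting the contributions at $G_i$ yields exactly the set of exponents $q + \sum_H e_F(G_i,H) z_H$ with log powers governed by the pairs $(z_H,p_H)$ prescribed in the statement (where the trivial choice $(0,0)$ corresponds to the $H$-factor being absent), matching $F^\sharp\calG(G_i)$. The analogous assertion for iterated expansions at intersections of several boundary hypersurfaces of $X$ follows by the same bookkeeping, with the coefficients on each face themselves being polyhomogeneous on that face with the corresponding restricted index family.

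The main obstacle is passing from formal expansion to genuine polyhomogeneity: one must show that truncating at real part $\leq N$ leaves a remainder in $\dot C^N\big([0,1);\calA^{(F^\sharp\calG)_{G_i}}(G_i)\big)$ in the sense of \Cref{APP.def:phg}. This requires controlling the Taylor remainders of $a_j^{z_j}$ and $f_{\vec z,\vec p}\circ F$ uniformly, and verifying that only finitely many index pairs contribute below any fixed threshold, which follows from condition \textit{i} of \Cref{APP.def:index.set} applied to each $\calG(H_j)$ together with the positivity of the exponents $e_F(G_i,H_j)$. Invariance under the choices of boundary defining functions (which re-scales each $a_j$ by a smooth positive function) and of transverse coordinates then ensures that $F^\sharp \calG$ assembles into a well-defined global index family on $X$, completing the proof.
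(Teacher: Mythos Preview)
The paper does not actually prove this theorem: it is stated as background material in \Cref{APP.sec:defs}, with the remark ``The following formulations are adapted from \cite{Gri01}, see \cite{Mel92}, \cite{Mel93} or \cite{Mel96} for proofs.'' So there is no proof in the paper to compare against.

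Your proposal follows the standard route one finds in those references: reduce to local product coordinates, use the $b$-map relation $\eta_j \circ F = a_j \prod_i \rho_i^{e_F(G_i,H_j)}$ to rewrite each factor $\eta_j^{z_j}(\log\eta_j)^{p_j}$ as a finite sum of monomials $\rho_i^{\,e_F(G_i,H_j)z_j}(\log\rho_i)^{q_i}$ with smooth coefficients, Taylor expand the smooth prefactors to produce the integer shift $q$, and then verify the remainder estimates. This is correct in outline and is essentially Melrose's argument. One small point worth tightening: when several $H_j$ contain $F(G_i)$, the log power at $G_i$ can be as large as $\sum_j p_j$, as you note; the formula in the statement writes a single $p_H$, which should be read as shorthand for the appropriate combination (the index set generated by all such products). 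Your bookkeeping here is in fact more explicit than the paper's notation.
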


\begin{thm}[Push-Forward Theorem]\label{APP.thm:pft} Let $F : X \too Y$ be a $b$-fibration of compact manifolds with corners. If $\calE$ is an index family for $X$ such that
 \begin{equation}\label{APP.eq:pft.int.cond}
  G \in \nul(e_F) \quad\Longrightarrow\quad \Re\calE(G) > 0 \;,
 \end{equation}
and if $u \in \calA^\calG(X;\bOmega)$, then $F_*u$ is a polyhomogeneous $b$-density on $Y$ with index family given by
 \begin{equation}\label{APP.eq:pft.index}
  F_\sharp\calE(H) \deq \Bcup_G \Myset{\left( \frac{z}{e_F(G,H)},p\right)}{(z,p)\in\calE(G)} \;,
 \end{equation}
where the extended union is over boundary hypersurfaces $G \in \calM_1(X)$ with $F(G) \subset H$.
\end{thm}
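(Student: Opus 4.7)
Since polyhomogeneity is local, both on $X$ and on $Y$, the plan is to trivialise the assertion by passing to local product coordinates near a point $p \in X$ and its image $F(p) \in Y$, then reduce to a short list of model $b$-fibrations whose push-forward can be computed by hand. Using the structure theorem for $b$-maps, near $p$ one may choose adapted coordinates $(x_1, \dotsc, x_k, y)$ on $X$ and $(t_1, \dotsc, t_l, s)$ on $Y$ in which the boundary defining functions pull back as
\begin{equation*}
 t_j \circ F = a_j \prod_{i=1}^{k} x_i^{e_F(G_i, H_j)},
\end{equation*}
with $a_j$ smooth and strictly positive and with $y \mapsto s$ a genuine fibration in the interiors. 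Since $F$ is a $b$-fibration, the crucial combinatorial consequence is that each $x_i$ appears nontrivially in at most one $t_j$; equivalently, the set of $b$-faces not mapped onto a face of $Y$ is precisely $\nul(e_F)$, over which the fibre is a genuine (interior) fibration.

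This decomposition reduces the theorem to three elementary cases. First, an interior fibration $X = N \times Y \to Y$ with compact fibre $N$: push-forward is fibrewise integration of a polyhomogeneous $b$-density, and the integrability condition $\Re \calE(G) > 0$ on $G \in \nul(e_F)$ is exactly what is needed for the fibre integral to converge absolutely; the resulting expansion at each face $H$ of $Y$ is obtained by term-by-term integration, which preserves polyhomogeneity and introduces no new exponents. Second, the key model $F : [0,1)_{x_1} \times [0,1)_{x_2} \to [0,1)_t$, $F(x_1,x_2) = x_1 x_2$: writing $u = x_1^{z_1} x_2^{z_2} \log^{p_1} x_1 \log^{p_2} x_2 \, v \, \tfrac{dx_1}{x_1} \tfrac{dx_2}{x_2}$ with $v$ smooth up to the corner and substituting $x_2 = t/x_1$ on the fibre $\{x_1 x_2 = t\}$, one computes
\begin{equation*}
 F_* u = \Bigl(\int_{0}^{\infty} x_1^{z_1 - z_2} \log^{p_1}(x_1) \log^{p_2}(t/x_1) \, v(x_1, t/x_1) \,\tfrac{dx_1}{x_1}\Bigr) \, t^{z_2} \tfrac{dt}{t}.
\end{equation*}
When $z_1 \neq z_2$ one obtains, after Taylor expanding $v$ and a standard Mellin/contour argument, a polyhomogeneous expansion with exponent set $\calE_1 \cup \calE_2$; when $z_1 = z_2$ the integral picks up an extra factor $\log t$, producing the log-power $p_1 + p_2 + 1$ and hence the extended union $\calE_1 \mathbin{\overline{\cup}} \calE_2$. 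Third, the projection off an $\Sph^m$-factor in the normal form is handled as in the first case.

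The general $b$-fibration is then written, via the local normal form and a partition of unity, as a finite composition of these three models. One must check that $F_\sharp$ behaves well under composition, which is a direct combinatorial computation using the definition of the extended union and the relation $e_{F \circ G}(K, H) = \sum_{L} e_G(K, L) e_F(L, H)$ restricted by the tree of faces; the constraint that $b$-fibrations do not increase codimension guarantees that in each composition at most one factor contributes a $1/e_F(G, H)$ to a given face $H$, so the formula reassembles cleanly. The main obstacle is controlling what happens at the corners of $Y$: several faces $G$ of $X$ may project into a common face $H$, and if two of the resulting exponent sets $\calE(G)/e_F(G,H)$ share a common exponent, extra logarithmic terms appear; checking this by an explicit Mellin transform in the model and verifying that no additional logs are spuriously introduced by the change of variables in the general case is the delicate step. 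The integrability condition on $\nul(e_F)$ is then seen, a posteriori, to be not merely sufficient but also necessary for the push-forward to exist as a $b$-density with a polyhomogeneous expansion.
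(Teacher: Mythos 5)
The paper does not prove this theorem itself: it states it in the appendix and refers the reader to \cite{Mel92,Mel93,Mel96} and \cite{Gri01} for proofs, so there is no in-paper argument to compare against. Your plan is essentially the standard proof from those references: localise using the $b$-map normal form, note that the $b$-fibration condition on codimensions forces each row of $e_F$ to have at most one nonzero entry, and reduce to a short list of elementary models, with the multiplicative model $(x_1,x_2)\mapsto x_1x_2$ being the source of the extended union $\bcup$ via the $\log t$ that appears when exponents collide. The observation that the integrability condition at $\nul(e_F)$ is precisely absolute convergence of the fibre integral, and your model-2 computation, are both correct.

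One genuine gap: your three models do not account for the denominator $e_F(G,H)$ in the formula $F_\sharp\calE(H) = \Bcup_G \{(z/e_F(G,H),p)\}$. The map $(x_1,x_2)\mapsto x_1x_2$ only covers exponent $1$; to produce the rescaling $z\mapsto z/e$ you also need the one-dimensional power model $x\mapsto x^e$ with $e\ge 2$, under which $x^z\,\bdens{x}$ pushes forward to $\tfrac{1}{e}t^{z/e}\,\bdens{t}$. Either add this as a fourth model, or absorb it by a preliminary change of variables before applying the product model; as written the plan cannot reach the general statement. Two smaller imprecisions worth tightening: (i) a general $b$-fibration localises to a \emph{product} of model maps over different groups of coordinates, and only incidentally to a composition, so the combinatorial check you flag is really about iterated push-forward along a product rather than about $F_\sharp$ being functorial under arbitrary composition of $b$-fibrations; (ii) "each $x_i$ appears in at most one $t_j$" is the codimension condition of \Cref{APP.def:b.fibration}, not literally "equivalent" to the statement about $\nul(e_F)$ — the latter is a separate consequence. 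Also, the necessity claim at the end is only true at the level of the whole class $\calA^\calE$, not for each individual $u$, and is in any case not part of the statement being proved.
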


Note that the definition of $F^\sharp\calG$ does not involve the extended union while the definition of $F_\sharp\calE$ does: Consequently, a pull-back cannot create additional logarithmic terms whereas a push-forward can. Moreover, the index set $F_\sharp\calE(H)$ is known to be too big: In \cite{Gri01}, a refined index set, say $F_\sharp^\p\calE(H)$, is defined which in some cases is strictly smaller yet \Cref{APP.thm:pft} is still satisfied. In the situation described here, the sets coincide for any $H$. Nevertheless, in both cases, the additional logarithmic terms might not appear, depending on the specific density $u$.

\subsection{Blow-Ups of Product Submanifolds}\label{APP.ssec:blow.ups}

Real (radial) blow-ups are an indispensable tool in singular geometric analysis. They can be used to resolve singularities in a very intuitive, geometric way. For more details and proofs, we refer to \cite{Gri01}, \cite{Mel93} and \cite{Mel96}. The basic example is simply introduction of polar coordinates in $\R^n$:
 \begin{equation}\label{APP.eq:simple.blowup}
  \beta : \R_+ \times \Sph^{n-1} \ni (r,\omega) \mapstoo (r\omega) \in \R^n
 \end{equation}
This is in fact a smooth map which restricts to a diffeomorphism $(0,\infty) \times \Sph^{n-1} \too \R^n\setminus\{0\}$. The blow-up, the domain of $\beta$, is denoted by $[\R^n;\{0\}]$ and $\beta$ is called the \emph{blow-down map}. The boundary $\{0\}\times\Sph^{n-1}$ is called the \emph{front face} and often denoted by $\ff(\beta)$. Please note that $\beta(\ff(\beta)) = \{0\}$. More generally, one goes over to define the blow-up of the origin of any vector space, then of the zero section of a vector bundle, and finally the blow-up of a $p$-submanifold $Y$ of a manifold with corners $X$: The front face of the blow-up \cref{APP.eq:simple.blowup} is diffeomorphic to $\Sph^{n-1}$, which is (in this case) just the set of unit tangent vectors in $\R^n$ that are inward pointing and normal to the origin, i.e., the inward pointing spherical normal bundle $\Sph_+ N \{0\}$. Although we will not go into the details here, this bundle can be defined for any $p$-submanifold $Y$ of a manifold with corners $X$ as well, and we define (cf.~\cite[5.3]{Mel96}),
 \begin{equation}\label{APP.eq:general.blowup}
  \big[X;Y\big] = \Sph_+N Y \sqcup X \setminus Y \;,
 \end{equation}
where $\sqcup$ denotes a disjoint union. In Section 5.3 of \cite{Mel96} it is also shown that if $Y$ is a $p$-submanifold of $X$, $[X;Y]$ naturally inherits the structure of a manifold with corners from $X$ and $\beta$ is a $b$-map and a diffeomorphism away from $\ff(\beta)$. Moreover, the smooth vector fields on $X$, tangent to $Y$, lift to span the smooth vector fields on $[X;Y]$, tangent to $\ff(\beta)$.

We will denote iterated blow-ups by $[X;Y_1;\dotsc;Y_k]$ for instance. Please observe that if the order of blow-ups is changed, the results will not be diffeomorphic spaces, even if the altered blow-up is still well-defined. There are certain cases in which the order of blow-ups does not make a difference, compare \cite[5.8]{Mel96}, the most prominent ones being the cases of nested $p$-submanifolds and of $p$-submanifolds that meet transversally.

If $(x,y)=(x_1,\dotsc,x_k,y_1,\dotsc,y_l)$ are local product coordinates of $X$ near a point $p \in Y$ so that, locally, $Y= \{y_1=\dotsc=y_l=0\}$, we may use these coordinates to obtain \emph{projective coordinates} for a neighbourhood of $q \in [X;Y]$, $\beta(q)=p$. For at least one of the coordinates $y_j$, and without loss of generality we assume it is $y_1$, we have $\beta^*(dy_1) \neq 0$ at $q$. Then, the set of functions
 \begin{equation}\label{APP.def:proj.coords}
  \big(x_1, \dotsc, x_k, y_1, \tfrac{y_2}{y_1},\dotsc,\tfrac{y_l}{y_1} \big)
 \end{equation}
lift to give local coordinates near $q$. In particular, $y_1$ lifts to a boundary defining function of $\ff(\beta)$ near $q$.

To conclude this section, let us remark that at least in the case of blow-down maps, the pull-back of smooth densities is well-defined, compare \cite[5.5]{Mel96} or \cite[4.7]{Mel93}. This is so because $\beta$ restricts to a diffeomorphism $[X,Y]\mathring{\phantom{l}} \too \mathring{X}$ and we can lift a distributional density on $X$ to act on a compactly supported function $f$ on $[X,Y]$ by duality, i.e., on $(\beta^{-1})^*f$.


\subsection{\textb-Pseudodifferential Operators}\label{APP.sec:pseudos}

For definitions and results on pseudodifferential operators on $\R^n$ and manifolds without boundary, please see \cite{Hoer85,Shu87,Tay81}. $b$-pseudodifferential operators are best defined in terms of their kernels, which are requested to be polyhomogeneous conormal $b$-half densities on the $b$-double space. The details are given in \cite{Mel93,Mel96}, for instance, and \cite{Gri01} gives a good to read introduction to the $b$-calculus.

Let $X$ be a manifold with boundary and assume for simplicity (and because that is the situation we will need) that $\partial X$ is connected. (The general case can be found in \cite{Loy98}.) The $b$-double space of $X$ is the blow-up
 \begin{equation}\label{APP.def:b.double}
  X_b^2 = \big[X^2;(\partial X)^2\big] \xrightarrow{\quad \beta_b \quad} X^2 \;,
 \end{equation}
that is, we take its double space and blow up the single corner it has. $X_b^2$ has three boundary hypersurfaces: The \emph{left face} $\lf = \beta_b^\ast(\partial X \times X)$, the \emph{right face} $\rf = \beta_b^\ast(X \times \partial X)$ and the \emph{front face} $\ff = \beta_b^\ast(\partial X \times \partial X)$. Another important submanifold is the lift $D_b$ of the diagonal $D \subset X^2$, it is an interior $p$-submanifold of $X_b^2$ and has a well-defined normal bundle in $X_b^2$, up to the boundary. The latter allows us to extend the concept of conormality to $X_b^2$. First of all, we recall the definition of classical (or one step polyhomogeneous) conormal distributions on manifolds without boundary:

\begin{define}[Conormal Distributions]\label{APP.def:conormal} Let $X$ be a manifold and $Y \subset X$ be an embedded submanifold. A distribution $u \in C^{-\infty}(X)$ is called a \emph{(classical) conormal distribution of degree} $m \in \R$ \emph{at} $Y$ if $u \in C^\infty(X \setminus Y)$ and, given local coordinates $y_1,\dotsc,y_k,z_1,\dotsc,z_l$ around $Y$ such that $Y$ is given by $z_1=\dotsc=z_l = 0$, we have
 \begin{equation}\label{APP.eq:conormal.fourier}
  u(y,z) = \int_{\R^l} e^{iz\zeta} \sigma(y,\zeta) d\zeta \;,
 \end{equation}
where $\sigma$ has an asymptotic expansion
 \begin{equation}\label{APP.eq:conormal.sum}
  \sigma(y,\zeta) \sim \sum_{j=0}^\infty \sigma_{m^\p-j}(y,\zeta) \;,
 \end{equation}
with the $\sigma_{m^\p-j}$ being homogeneous of degree $m^\p-j$ in $\zeta$, where $m^\p = m + \frac{1}{4}\dim X - \frac{1}{2}\codim Y$. The space of distributions that are conormal of degree $m$ with respect to $Y$ is denoted by $I^m(X,Y)$.
\end{define}

The above asymptotic expansion is meant to be in terms of symbols, which is to say that for any $N \in \N$ we assume $\sigma - \sum_{j=0}^N \sigma_{m^\p-j} \in S^{m^\p-N-1}(Y \times \R^l)$, compare \cite[3.3]{Shu87}. Thus, $\sigma$ is in fact a \emph{classical symbol}, $\sigma \in S_{cl}^{m^\p}(Y \times \R^l)$. There is a different, yet equivalent definition of conormality. In \cite[Ch. 6.1]{Mel96} the following space is introduced:
 \begin{equation}\label{APP.eq:alt.conormal}
  I^{[s]}(X,Y) = \big\{\, u \in H^s(X) \,\big|\,
                           \text{for all}\, k \in \N_0 : \mathcal{V}(X,Y)^k u \subset H^s(X) \,\big\} \;,
 \end{equation}
where $\mathcal{V}(X,Y)$ is the space of smooth vector fields on $X$ that are tangent to $Y$, i.e., those fields $V \in C^\infty(X;TX)$ such that $V(x) \in T_xY$ for all $x \in Y$. Melrose then shows that these spaces actually coincide, apart from a sign in the degree, $I^{[-s]}(X,Y) = I^s(X,Y)$. (Compare also \cite[Ch. 18.2]{Hoer85}.)

Now we combine \Cref{APP.def:phg,APP.def:conormal} to define conormality on manifolds with corners. The resulting space of distributions will be used to define the calculus of $b$-pseudodifferential operators.

\begin{define}[Polyhomogeneous Conormal Distributions]\label{APP.def:phg.conormal} Let $X$ be a manifold with corners, $\mathcal{G}$ be an index family for $X$ and let $Y$ be an interior p-submanifold of $X$. A distribution $u$ on $X$ is both polyhomogeneous with index family $\mathcal{G}$ at $\partial X$ and conormal of degree $m$ with respect to $Y$, if $u \in I^m(\mathring{X},\mathring{Y})$ and, near each boundary hypersurface $H$ of $X$ with boundary defining function $x$, $u$ has an asymptotic expansion as in \emph{\cref{APP.eq:phg.expansion}} where
 \begin{enumerate}
  \item the coefficients $u_{z,p}$ lie in $C^\infty\big([0,1) ; \calI^{m+\frac{1}{4},\mathcal{G}_H}(H,H \cap Y) \big)$ and
  \item the remainder is in $\dot{C}^N\big( [0,1) ; \calI^{m+\frac{1}{4},\mathcal{G}_H}(H,H \cap Y) \big)$,
 \end{enumerate}
both with respect to a trivialisation $[0,1)\times H$ of a neighbourhood of $H$. We denote the space of polyhomogeneous conormal distributions by $\calI^{m,\mathcal{G}}(X,Y)$.
\end{define}

Please observe that \Cref{APP.def:phg.conormal} can readily be extended to sections of vector bundles, most importantly to half-densities, by requiring the coefficients in any local trivialisation to satisfy the respective conditions. Moreover, these spaces are $C^\infty_c(X)$-modules (compare \cite{Fri,Mel96}).

In order to define $b$-pseudodifferential operators on $X$, let $\pi_{j,b} : X_b^2 \too X$ be the composition of the blow-down map \cref{APP.def:b.double} and projection onto the $j$-th factor. Then, a $b$-half density $A$ on $X_b^2$ acts on a $b$-half density $u$ on $X$ by
 \begin{equation}\label{APP.eq:b.action}
  \Op{A}u = (\pi_{1,b})_*\left(A \, (\pi_{2,b})^*u\right) \;,
 \end{equation}
whenever the push-forward converges and the right-hand side of \Cref{APP.eq:b.action} is again a $b$-half density.

\begin{define}[\textb-Pseudodifferential Operators]\label{APP.def:b.pseudo} Let $X$ be a compact manifold with boundary. The \emph{small calculus} of $b$-pseudo\-differential operators of degree $m$ on $X$ is the space $\calI^{m,(\infty,\infty,0)}(X_b^2,D_b;\bOmegah)$ and is denoted by $\Psi_b^m(X)$. Elements of $\Psi_b^m(X)$ act on $b$-half-densities on $X$ by \emph{\cref{APP.eq:b.action}}, where the index family $(\infty,\infty,0)$ refers to the ordering $(\lf,\rf,\ff)$. The \emph{full calculus} $\Psi_b^{m,\calG}(X)$ is the space $\calI^{m,\calG}(X_b^2,D_b;\bOmegah)$.
\end{define}

As is shown in \cite[Prop. 4.34]{Mel93}, elements of the small calculus define bounded linear operators $C^\infty(X;\bOmegah) \too C^\infty(X;\bOmegah)$ and the symbol map and ellipticity are defined as for the standard pseudodifferential calculus, but this form of ellipticity is not enough to allow for invertibility up to compact errors. Again in \cite{Mel93}, the notion of full ellipticity is introduced and it is shown that there is a parametrix for fully elliptic operators which leads to a good Fredholm theory.

For the full calculus, there is so far no good Fredholm theory, but mapping properties between spaces of polyhomogeneous $b$-half densities and weighted $b$-Sobolev spaces have been established, see for instance \cite{GH14}.

\begin{define}[\textb-Sobolev Spaces] Let $\mathrm{dvol}_b$ be a smooth and non-vanishing $b$-density on $X$, $x$ a global boundary defining function and $\chi$ a cut-off function which is equal to $1$ near $x=0$ and equal to $0$ for $x > \eps$. Then, the $b$-Sobolev space of order $m \in \N$ and with weight $s \in \R$ is
 \[ x^s H_b^m(X,\mathrm{dvol}_b)
     = \myset{ x^s u }{ u \in H^m_{loc}(\mathring{X}),\,
                             (xD_x)^j(D_y)^\alpha \chi u \in L^2(M,\mathrm{dvol}_b)
                             \,\forall\, j+|\alpha| \le m } \;. \]
\end{define}
As is noted in \cite{GH14}, this definition does not depend on the choice of $\mathrm{dvol}_b$, $\chi$ and coordinates and can be generalised to $m \in \R$ by arguments of duality and interpolation.

\begin{thm}\label{APP.thm:full.mapping} Let $X$ be a compact manifold with boundary, $x$ be a global boundary defining function for $\partial X$, $P \in \Psi_b^{m,\calG}(X)$ and $\alpha$, $\beta \in \R$, $k \in \R$.
 \begin{enumerate}
  \item If $\calG(\lf) > \beta$, $\calG(\rf) > -\alpha$ and $\calG(\bff) \ge \beta - \alpha$, then $P$ is bounded as an operator
       \[ P : x^\alpha H_b^{k+m}(X,\mathrm{dvol}_b) \too x^\beta H_b^k(X,\mathrm{dvol}_b). \]
  \item If $m < 0$ and strict equality holds everywhere in item i), then $P$, acting as before, is compact.
  \item If $\calG(\rf) + E > 0$, then $P$ is bounded as an operator
       \[ P : \calA^{E}(X;\bOmegah) \too \calA^{P_\sharp(E)}(X;\bOmegah) \;, \]
      where $P_\sharp(E) = \calG(\lf) \bcup \big( \calG(\bff) + E \big)$.
 \end{enumerate}
\end{thm}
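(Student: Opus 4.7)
The plan is to separate the kernel $A \in \calI^{m,\calG}(X_b^2,D_b;\bOmegah)$ into an \emph{off-diagonal} part $A_\mathrm{off}$ supported away from the lifted diagonal $D_b$ and a \emph{diagonal} part $A_\mathrm{diag}$ supported in a small neighbourhood of $D_b$ which does not meet $\lf$ or $\rf$. Since $A_\mathrm{off}$ is smooth across $D_b$ and polyhomogeneous with index family $\calG$, it is accessible through the Push-Forward Theorem and the Schur test, while $A_\mathrm{diag}$ behaves locally like an element of the small $b$-calculus of order $m$, possibly modulated by a polyhomogeneous weight at $\bff$. Each piece can then be treated separately, and the three items of the theorem are deduced one at a time from this decomposition.

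For item \emph{iii)}, I would apply the Pull-Back and Push-Forward Theorems, \Cref{APP.thm:pbt,APP.thm:pft}, directly to $\Op{A}u = (\pi_{1,b})_*\big(A \cdot (\pi_{2,b})^*u\big)$. The exponent matrix of $\pi_{1,b}$ has $\nul(e_{\pi_{1,b}}) = \{\rf\}$, with $\pi_{1,b}$ mapping both $\lf$ and $\bff$ to $\partial X$ with exponent $1$; the roles of $\lf$ and $\rf$ are swapped for $\pi_{2,b}$. Pulling back $u \in \calA^E(X;\bOmegah)$ therefore gives index sets $0$, $E$, $E$ at $\lf$, $\rf$, $\bff$, so the product $A \cdot (\pi_{2,b})^*u$ is a polyhomogeneous conormal $b$-density with index sets $\calG(\lf)$, $\calG(\rf)+E$, $\calG(\bff)+E$. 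The integrability condition \cref{APP.eq:pft.int.cond} then reads $\calG(\rf)+E > 0$, and \cref{APP.eq:pft.index} yields the extended-union index $\calG(\lf) \bcup \big(\calG(\bff)+E\big) = P_\sharp(E)$ at $\partial X$. The conormal singularity at $D_b$ pushes forward to a smooth function in the interior, since $\pi_{1,b}$ is a submersion there, so it does not interfere with the index family at $\partial X$.

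For item \emph{i)}, the diagonal piece $\Op{A_\mathrm{diag}}$ is handled by the $L^2$-boundedness of the small $b$-calculus (cf.\ \cite[Prop. 4.34]{Mel93}): after conjugation by appropriate powers of $x$, $x^{-\beta}\Op{A_\mathrm{diag}} x^\alpha$ maps $H_b^{k+m} \to H_b^k$ provided the $\bff$-mismatch $\beta-\alpha$ does not exceed $\calG(\bff)$. For $\Op{A_\mathrm{off}}$ I would apply a weighted Schur test on $L^2(X,\mathrm{dvol}_b)$: the hypotheses $\calG(\lf) > \beta$ and $\calG(\rf) > -\alpha$ precisely guarantee that both weighted Schur integrals of $x_l^{-\beta}\,|A_\mathrm{off}|\,x_r^{\alpha}$ are uniformly finite, while $\calG(\bff) \ge \beta - \alpha$ ensures that this uniformity persists up to the corner $\bff$. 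Higher Sobolev regularity then follows by commuting $b$-vector fields through $A$, which produces operators of the same type and leaves the index family unchanged.

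For item \emph{ii)}, the strict inequalities allow me to approximate $A_\mathrm{off}$ in the operator norm $x^\alpha L^2 \to x^\beta L^2$ by smooth kernels supported in compact subsets of $(\mathring{X})^2$, each defining a finite-rank and hence compact operator. Simultaneously, $\Op{A_\mathrm{diag}}$, being an element of the small $b$-calculus of order $m < 0$, is compact by the classical argument that it factors through a compact Rellich-type embedding (gaining $|m|$ $b$-derivatives). Compactness for general $k$ and between weighted $b$-Sobolev spaces follows by the same argument applied after conjugation by $b$-vector fields. The main obstacle throughout is the careful book-keeping of index sets and projective-coordinate estimates across the three faces of $X_b^2$; the substantive analytic input is then the Push-Forward Theorem in \emph{iii)} and the Schur test together with small-calculus compactness in \emph{i)} and \emph{ii)}.
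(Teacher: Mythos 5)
A preliminary remark: the paper does not prove this theorem at all; it is stated as background and attributed to the literature (the sentence introducing it points to \cite{GH14}), so there is no in-paper proof to compare yours with, and I judge your sketch on its own merits. Your outlines of \emph{i)} and \emph{iii)} follow the standard route and are essentially sound: conjugation by $x^{\alpha}$, $x^{-\beta}$ turns the three hypotheses of \emph{i)} into exactly the uniform integrability needed for a weighted Schur test on the off-diagonal part, and your exponent bookkeeping for $\pi_{1,b}$, $\pi_{2,b}$ correctly reproduces $P_\sharp(E)=\calG(\lf)\bcup\big(\calG(\bff)+E\big)$. One caveat for \emph{iii)}: \Cref{APP.thm:pft} is stated for polyhomogeneous $b$-densities and does not literally apply to a kernel with a conormal singularity at $D_b$, so you cannot push forward all of $A$ at once as your third paragraph does; your opening decomposition is the right remedy, but then the near-diagonal piece, which meets $\bff$, has to be handled by the separate argument that small-calculus operators (here weighted by $\calG(\bff)$ at $\bff$) preserve polyhomogeneity — this is used implicitly and never argued.

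The genuine gap is in \emph{ii)}, in the diagonal piece. You assert that $\Op{A_{\mathrm{diag}}}$ is compact because it lies in the small calculus of order $m<0$ and ``factors through a compact Rellich-type embedding (gaining $|m|$ $b$-derivatives)''. In the $b$-setting this fails twice over. First, on a compact manifold with boundary the inclusion $H_b^{k+|m|}(X)\hookrightarrow H_b^{k}(X)$ is \emph{not} compact: near $\partial X$ it is the inclusion of ordinary Sobolev spaces on the half-cylinder in $t=\log x$, defeated by bumps translating to $t=-\infty$; correspondingly a negative-order element of the small calculus is in general not compact, the obstruction being its indicial (normal) family, i.e.\ the non-decay of the kernel at $\bff$. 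This is precisely why \emph{ii)} demands strict inequality also at $\bff$, $\calG(\bff)>\beta-\alpha$ — a hypothesis your argument never uses for $A_{\mathrm{diag}}$. The repair must use it: after conjugation the kernel of $x^{-\beta}\Op{A_{\mathrm{diag}}}x^{\alpha}$ vanishes to some order $\varepsilon>0$ at $\bff$, and since $\rho_{\lf},\rho_{\rf}$ are bounded below on its support one has $\rho_{\bff}\sim x\sim x^{\prime}$ there, so the operator can be approximated in operator norm (with error $O(j^{-\varepsilon/2})$ coming from this decay) by kernels supported in a compact subset of $\mathring{X}{}^2$, to which interior arguments apply. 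Second, even for those interior pieces your Rellich step does not deliver compactness for the map as written, $x^{\alpha}H_b^{k+m}\too x^{\beta}H_b^{k}$: the full gain of $|m|$ derivatives is already spent on boundedness, and a kernel that is elliptic of order $m$ on an interior set acts there essentially as an isomorphism onto its localized range (on the torus, $(1+\Delta)^{m/2}:H^{k+m}\to H^{k}$ is an isometry), never compact with that shift. So the compactness assertion has to be read, as in the source the paper cites, on spaces that keep part of the order (and, after conjugation, a positive power of $x$) in reserve, and your factorization must build in both reserves explicitly; as written, the step ``negative order $\Rightarrow$ Rellich $\Rightarrow$ compact'' is the one place where the proposal genuinely breaks down.
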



\clearpage
\bibliographystyle{myamsalpha}
\bibliography{half-space}

\end{document}